\DeclareMathOperator*{\argmin}{arg\,min}
\newcommand{\zz}{^{\top}}
\newcommand{\nzz}{^{-\top}}
\newcommand{\ct}{^*}
\newcommand{\fs}{^2_{\text{F}}}
\newcommand{\R}{\mathbb{R}}
\newcommand{\bC}{\mathbb{C}}
\newcommand{\inv}{^{-1}}
\newcommand{\nn}{\nonumber}
\newcommand{\mat}[2]{
\left[
\begin{array}{#1}
	#2
\end{array}
\right]
}
\newcommand{\eqnum}[2]{
\begin{eqnarray}\label{#1}
	#2
\end{eqnarray}
}
\newcommand{\eqnonum}[1]{
	\begin{eqnarray*}
		#1
	\end{eqnarray*}
}
\newcommand{\dkh}[1]{\left(#1\right)}
\newtheorem{assumption}{Assumption~}
\newcommand{\red}[1]{\begin{color}{black}#1\end{color}}
\newcommand{\blue}[1]{\begin{color}{black}#1\end{color}}
\newcommand{\magenta}[1]{\begin{color}{black}#1\end{color}}
\definecolor{mygreen}{rgb}{0,.5,0}
\newcommand{\hQ}{\hat{Q}}
\newcommand{\hA}{\hat{A}}
\newcommand{\hB}{\hat{B}}
\newcommand{\hl}{\hat{\lambda}}
\newcommand{\eig}[1]{\mathrm{eig}(#1)}
\begin{document}

\authorrunning{C. Chen, M. Li, X. Liu and Y. Ye}
\titlerunning{Extended ADMM and BCD for Nonseparable Convex Minimization Models}
\title{Extended ADMM and BCD for Nonseparable Convex Minimization Models with Quadratic Coupling Terms: Convergence Analysis and Insights}

\author{
    Caihua Chen  \and
    Min Li \and
    Xin Liu \and
    Yinyu Ye
}

\institute{
	C. Chen \at International Center of Management Science and Engineering,
	School of Management and Engineering, Nanjing University, China, \email{chchen@nju.edu.cn}
	\and
	M. Li \at School of Economics and Management, Southeast University, China,
	\email{limin@seu.edu.cn}
	\and
	X. Liu \at State Key Laboratory of Scientific and Engineering Computing, Academy of Mathematics and Systems Science, Chinese Academy of Sciences, and University of Chinese Academy of Sciences, China, \email{liuxin@lsec.cc.ac.cn}
	\and
	Y. Ye \at Department of Management Science and Engineering, School of Engineering,
	Stanford University, USA; and International Center of Management Science and Engineering, School of Management and Engineering, Nanjing University, \email{yyye@stanford.edu}
}

\date{Received: date / Accepted: date}
\maketitle

\begin{abstract}
In this paper,
we establish the convergence of the \red{proximal} alternating direction method of multipliers (ADMM) and block coordinate descent (BCD) for 
nonseparable minimization models with quadratic coupling terms. The novel convergence results presented in this paper answer several open questions that have been the subject of considerable discussion. We firstly extend the 2-block \red{proximal} ADMM to linearly constrained convex optimization with a coupled quadratic objective function, an area where theoretical understanding is currently lacking, and prove that the sequence generated by the \red{proximal} ADMM converges in point-wise manner to a primal-dual solution pair. Moreover, we apply randomly permuted ADMM (RPADMM) to nonseparable multi-block convex optimization, and prove its expected convergence for a class of nonseparable quadratic programming problems. When the linear constraint vanishes, the 2-block \red{proximal} ADMM and RPADMM reduce to the 2-block cyclic \red{proximal} BCD method and randomly permuted BCD (RPBCD). Our study provides the first iterate convergence result for 2-block cyclic proximal BCD without assuming the boundedness of the iterates. 
We also theoretically establish the expected iterate convergence result concerning multi-block RPBCD for convex quadratic optimization. In addition, we demonstrate that RPBCD may have a worse convergence rate than cyclic proximal BCD for 2-block convex quadratic minimization problems. Although the results on RPADMM and RPBCD are restricted to quadratic minimization models, they provide some interesting insights: 1) random permutation makes ADMM and BCD more robust for multi-block convex minimization problems; 2) cyclic BCD may outperform RPBCD for ``nice'' problems, and therefore RPBCD should be applied with caution when solving general convex optimization problems.
\end{abstract}

\medskip

\keywords{Nonseparable convex minimization, Alternating direction method of
    multipliers, Block coordinate descent method, Iterate convergence,
    Random permutation}

\subclass{65K05, 90C26}

\section{Introduction}

In this paper we consider the linearly constrained convex minimization model with an objective function that is the sum of several separable functions and a coupled quadratic function:
\begin{eqnarray}\label{eq:convobj}\label{eq:obj1}
\begin{array}{ll}
\min\limits_{x \in \R^d} &  \displaystyle \theta(x): = \sum_{i=1}^n \theta_i(x_i) + \frac{1}{2}x^\top H x + g^\top x\\ [0.2cm]
\mbox{s.t.} &  {\displaystyle  \sum_{i=1}^n A_ix_i  =b,}
\end{array}
\end{eqnarray}
where $\theta_i:\R^{d_i}\mapsto (-\infty,\,+\infty]$ ($i=1,2,\ldots,n$)
are closed
proper convex (not necessarily smooth) functions; $x_i \in \R^{d_i},
x=(x_1,x_2,\ldots,x_n)\in \R^d$; $H\in \R^{d\times d}$ is a
symmetric \red{and} positive semidefinite matrix; $g\in \R^d$; $A_i\in
\R^{m\times d_i}$ and $b\in \R^m$. A point $(\bar x, \bar \mu)$ is
said to be a Karush-Kuhn-Tucker (KKT) point of \eqref{eq:obj1} if it satisfies
\begin{equation}\label{kkt}
\left\{
\begin{array}{rcl}
- (H \bar x+g)_i +  A_i^\top \bar \mu  &\in& \partial \theta_i(\bar x_i), \quad i=1,\cdots,n,\\[0.2cm]
\sum_{i=1}^n A_i \bar{x}_i  &=&  b.
\end{array}
\right.
\end{equation}
The set consisting of the KKT points of \eqref{eq:obj1} is assumed
to be nonempty. Problem \eqref{eq:obj1} has many applications in signal and imaging processing, machine learning, statistics, and engineering; e.g., see
\cite{MaYi,CuiLiSunToh2015,agarwal2012noisy,Hong2014,Feng14,Mota}.

The augmented Lagrangian function of \eqref{eq:convobj} is
\begin{eqnarray}\label{eq:lag}
\mathcal{L}_\beta (x_1, \ldots, x_n;\mu) := \displaystyle
\sum_{i=1}^n \theta_i(x_i) + \frac{1}{2}x^\top H x + g^\top x - \mu\zz
\big(\sum_{i=1}^n A_ix_i-b\big) + \frac{\beta}{2}\big\|\sum_{i=1}^n
A_ix_i - b\big\|^2,
\end{eqnarray}
where $\mu\in\R^m$ is the Lagrangian multiplier and $\beta>0$ is the
penalty parameter. In this paper, we extend the $n$-block
\red{proximal} alternating direction method of multipliers (ADMM) to solve the nonseparable
convex minimization problem \eqref{eq:obj1}, which
consists of a cyclic update of the primal variables $x_i\,
(i=1,2,\ldots,n)$ in the Gauss-Seidel fashion and a dual ascent type
update of $\mu$ at each iteration, i.e.,
\red{\begin{eqnarray}\label{eq:admm}
\left\{
\begin{array}{l}
\displaystyle x_1^{k+1} := \argmin\limits_{x_1 \in \R^{d_1}} \Big\{\mathcal{L}_\beta (x_1, x_2^k, \ldots, x_n^k;\mu^k) + \frac{1}{2}\|x_1 - x_1^k\|^2_{R_1}\Big\},\\
\displaystyle x_2^{k+1} :=\argmin\limits_{x_2  \in \R^{d_2}} \Big\{\mathcal{L}_\beta (x_1^{k+1}, x_2, x_3^k, \ldots,x_n^k;\mu^k) + \frac{1}{2}\|x_2 - x_2^k\|^2_{R_2}\Big\},\\
\displaystyle   \cdots\cdots  \\
\displaystyle x_n^{k+1} := \argmin\limits_{x_n  \in \R^{d_n}} \Big\{\mathcal{L}_\beta (x_1^{k+1}, x_2^{k+1}, \ldots, x_{n-1}^{k+1}, x_n;\mu^k)+ \frac{1}{2}\|x_n - x_n^k\|^2_{R_n}\Big\},\\
\displaystyle \mu^{k+1} := \mu^k -\beta (\sum_{i=1}^n
A_ix_i^{k+1}-b),
\end{array}
\right.
\end{eqnarray}
where $R_i \in \R^{d_i \times d_i}$, $i=1, \cdots, n$, are
symmetric and positive semidefinite matrices.}

Note that the algorithmic scheme \eqref{eq:admm}  reduces to the classical ADMM when there are only \red{two blocks ($n=2$), the coupled objective vanishes ($H=0$ and $g=0$) and $R_i = 0$ ($i=1, 2$).} ADMM was originally introduced in the early 1970s  \cite{Glow1975,Gabay1976}, and its convergence propertites have been studied extensively in the literature \cite{Glow1984,Eckstein1992,Bertsekas1997,He2012,MSvaiter2013,DYin,Deng2012}. Because of its wide versatility and applicability in multiple fields, ADMM is a popular means of solving optimization problems, especially those related to big data; we refer to\cite{Boyd2011} for a survey on the modern applications of ADMM.

For the case of $n\geq 3$, numerous research efforts have been devoted to analyzing the convergence of multi-block ADMM and its variants for the linearly constrained separable convex optimization model, i.e.,  \eqref{eq:convobj} without the coupled term.
Recent work \cite{CHYY2013} has shown that the $n$-block ADMM
\eqref{eq:admm} is not necessarily convergent, even for a nonsingular square system of linear equations. Various methods have been proposed to overcome the divergence issue of multi-block ADMM. One typical solution is to combine correction steps with the output of  $n$-block ADMM
 \eqref{eq:admm} \cite{hty2015,he2012alternating,HYZC}. If at least $n-2$ functions in the objective are strongly convex, it has been shown that  \eqref{eq:admm} is globally convergent, provided that the penalty parameter $\beta$  is restricted to a specific range \cite{Han2012,chen2013convergence,cai2014direct,lin2014convergence,Zhang2010,li2015convergent}. Without strong convexity, it has been shown \cite{hong2012linear} that the $n$-block ADMM with a small dual stepsize,
where the multiplier update \eqref{eq:admm} is replaced by
\[\mu^{k+1}=\mu^k - \tau \beta(\sum_{i=1}^n A_ix_i^{k+1} - b),
\]
is linearly convergent provided that the objective function satisfies certain error bound conditions.
{Some very recent studies \cite{lin15multiblock2,lin14linear}
have demonstrated the convergence of multi-block ADMM under some other conditions, and some convergent proximal variants of the multi-block ADMM have been proposed for solving convex linear/quadratic conic programming problems  \cite{sun2015convergent,LiST2014convergent,ChenSunToh2015-1}. A recent paper \cite{SLY2015} proposed a randomly modified variant of the multi-block ADMM \eqref{eq:admm},
called randomly permuted ADMM (RPADMM). At each step, RPADMM forms a random permutation of  $\{1,2,\ldots,n\}$ (known as block sampling without replacement), and updates the primal variables  $x_i\,(i=1,2,\ldots,n)$
in the order of the chosen permutation followed by the regular multiplier update. Surprisingly, RPADMM is convergent in expectation for any nonsingular square system of linear equations
\cite{SLY2015}.

In contrast to the separable case, studies on the convergence properties of $n$-block ADMM for
\eqref{eq:convobj} with nonseparable objective, even for $n=2$, are limited. In
\cite{Hong2014},  the authors demonstrated that when problem
\eqref{eq:convobj} is convex but not necessarily separable\footnote{
 The models considered in \cite{Hong2014,HLR2014}  are more general than problem
\eqref{eq:convobj}, as the authors of  \cite{Hong2014,HLR2014} actually allow
generally nonseparable smooth function in the objective, but in  \eqref{eq:convobj}
the coupled objective is a quadratic function.}, and certain error bound conditions are satisfied, the ADMM iteration converges to some primal-dual optimal solution, provided that the stepsize in the update of the multiplier is sufficiently small. Despite this conservative nature, the stepsize usually depends on some unknown parameters associated with the error bound, and may thus be difficult to compute, which often makes the algorithm less efficient. In view of this, it might be more beneficial to employ the classical ADMM \eqref{eq:admm} (with $\tau =1$) or its variants with a large stepsize $\tau\geq 1$. However, as mentioned in \cite{HLR2014}, {\bf``when the objective function is not separable across the variables, 
the convergence of the ADMM  \eqref{eq:admm} is still open, even in
the case where $n=2$ and $\theta(\cdot)$ is convex."} Along slightly different lines, \cite{CuiLiSunToh2015}  investigated the convergence of a majorized ADMM for the convex optimization problem with a coupled smooth objective function, which includes the $2$-block  ADMM
\eqref{eq:admm} for \eqref{eq:obj1} as a special case. Convergence was established for the case when the subproblems of the ADMM admit unique solutions and $H, A_1, A_2, R_1$  and $R_2$ satisfy some additional
restrictions; see Remark 4.2 in  \cite{CuiLiSunToh2015} for details.
Very recently, \cite{GZhang2015} studied the convergence and
ergodic complexity of a $2$-block proximal ADMM and its variants for
the nonseparable convex optimization by assuming some additional
conditions on the problem data. As the positive definite proximal
terms are indispensable in the analysis of these algorithms, the results derived in \cite{GZhang2015} are not  applicable to the
scheme \eqref{eq:admm} for problem \eqref{eq:convobj} since $R_1$ and $R_2$ are only positive semidefinite.

In this paper, we analyze the iterate convergence of proximal ADMM  \eqref{eq:admm} and the  randomly permuted ADMM for solving
the nonseparable convex optimization problem \eqref{eq:convobj}. The
main contributions of our paper are threefold. Firstly, we prove that
the 2-block proximal ADMM is convergent for \eqref{eq:convobj} only under a condition that ensures the subproblems have unique solutions. Our condition is the weakest to ensure iterate convergence for the proximal ADMM since, as we will see in  Section 2, it is not only
sufficient but also necessary for the convergence of the proximal ADMM
applied to some special problems. Our analysis partially answers the
open question mentioned in \cite{HLR2014} on the convergence of ADMM
for nonseparable convex optimization problems. Secondly, we extend the
RPADMM  proposed in \cite{SLY2015} to solve the model
\eqref{eq:convobj}, and prove its expected convergence in the case
where $\theta_i \equiv 0\,(i=1,2,\ldots,n)$. This
result is a non-trivial extension of the convergence result shown in
\cite{SLY2015}, since the objective in \eqref{eq:convobj} is more general and
its solution set may not be a singleton. Thirdly, when restricted to the unconstrained case, that is,  $A_i$ ($i=1,\cdots,n$) and $b$ are absent,  the proximal ADMM and RPADMM reduce to the
cyclic proximal block coordinate descent (BCD) method (also known as the alternating minimization method), i.e.,
\red{\begin{eqnarray}\label{eq:BCD}
\left\{
\begin{array}{l}
\displaystyle x_1^{k+1} := \argmin\limits_{x_1 \in \R^{d_1}} \theta (x_1, x_2^k, \ldots, x_n^k)+ \frac{1}{2}\|x_1 - x_1^k\|^2_{R_1},\\
\displaystyle x_2^{k+1} :=\argmin\limits_{x_2  \in \R^{d_2}}  \theta (x_1^{k+1}, x_2, x_3^k, \ldots,x_n^k)+ \frac{1}{2}\|x_2 - x_2^k\|^2_{R_2},\\
\displaystyle   \cdots\cdots  \\
\displaystyle x_n^{k+1} := \argmin\limits_{x_n  \in \R^{d_n}} \theta(x_1^{k+1}, x_2^{k+1}, \ldots, x_{n-1}^{k+1}, x_n)+ \frac{1}{2}\|x_n - x_n^k\|^2_{R_n}\Big\}.
\end{array}
\right.
\end{eqnarray}}
and randomly permuted BCD. An implication of our work is the iterate
convergence of the $2$-block cyclic proximal BCD method for the whole sequence and,
in particular, the expected convergence of randomly permuted multi-block BCD. Although the literature on BCD-type methods is vast  (e.g., \cite{Bert1999,Tseng2001,TYun2009,Beck2013,Luxiao2013,RHLuo2013,shalev2013stochastic,Beck2015,shefi15}),
there are very few results on the iterate convergence of BCD-type methods. As mentioned  in \cite{BST2013}, \textbf{``in all these works [on BCD or its proximal variants] only convergence of the subsequences can be established.''}
By assuming that the Kurdyka-\L{}ojasiewicz property holds on the objective function and the iterates are bounded,  \cite{AB10} and \cite{BST2013} established the iterate convergence of the proximal BCD and proximal alternating linearized minimization, respectively. It is clear that these results are also applicable to the BCD type methods for convex minimization problems. While the boundedness assumption of the sequence are typical to establish the iterate convergence of algorithms for nonconvex optimization problems, it might be a bit restrictive to assume the boundedness for analyzing the iterate convergence for the convex cases.
To the best of our knowledge, our convergence result for the  $2$-block \red{proximal} BCD method is the first for the 
\red{proximal} BCD that only requires the unique solutions-type condition of the subproblems, rather than any assumptions on the boundedness of the iterates. 

It has been claimed that randomly permuted BCD (RPBCD, also known as the ``sampling without replacement'' variant of randomized BCD, and called ``EPOCHS'' in a recent survey  \cite{Steve2015}) tends to converge faster than the randomized BCD \cite{Steve2015} , with the classical cyclic version performing even worse. Some numerical advantages of RPBCD compared with randomized BCD and cyclic BCD were discussed in \cite{shalev2013stochastic}. In fact, it has been stated that ``this kind of randomization [RPBCD] has been shown in several contexts to be superior to the sampling with replacement scheme analyzed above, but a theoretical understanding of this phenomenon remains elusive'' \cite{Steve2015}.   Randomized BCD (``sampling with replacemen'') has already been extensively studied \cite{Peter2014}, but its theoretical analysis does not apply to RPBCD. Although the function value convergence results  \cite{Tseng2001,Beck2013,HongBCD}
for cyclic or essential cyclic BCD can be simply extended to RPBCD, these analysis techniques are independent of permutation, so there remains a lack of direct theoretical analysis on the iterate convergence of RPBCD. Our expected iterate convergence of RPBCD for quadratic minimization problems can be regarded as the first direct analysis on the iterate convergence of the ``sampling without replacement'' variant of randomized BCD. We also prove that RPBCD may have a worse convergence rate than cyclic BCD for quadratic minimization problems. Thus, RPBCD should be used with caution for solving general optimization problems.

The rest of this paper is organized as follows. In Section 2, we
prove the iterate convergence of the $2$-block \red{proximal} ADMM and  cyclic
BCD for linearly constrained optimization problems with a coupled quadratic objective function \eqref{eq:convobj} and its unconstrained
variant, respectively. Section 3 illustrates the expected
convergence of the RPADMM and the RPBCD for a class of linear constrained quadratic optimization problems and its unconstrained variant, respectively. Finally, we conclude our paper and present some insights into the use of ADMM and BCD in  Section 4.

\red{\section{Convergence of $2$-Block Proximal ADMM}}

In this section, we will specify $n=2$ and analyze the iterate convergence of the 2-block \red{proximal} ADMM for
 the convex optimization model \eqref{eq:convobj}. For notational simplicity, we write
 \[  H: = \left[
\begin{array}{cc}
H_{11}       &     H_{12}   \\
H_{12}^\top  &     H_{22}
\end{array}
\right], \qquad  R: = \left[
\begin{array}{cc}
R_1          &     0   \\
0            &     R_2
\end{array}
\right] \qquad \hbox{and}  \qquad g: = \left[
\begin{array}{c}
g_1  \\
g_2
\end{array}
\right],
\nn\]
 and define the quadratic function $\phi(x_1,x_2)$ by
\begin{equation}\label{def-phi} \phi(x_1,x_2): = \frac{1}{2}x_1^\top H_{11}x_1 + x_1^\top H_{12} x_2 + \frac{1}{2}x_2^\top H_{22} x_2
+ g_1^\top x_1 + g_2^\top x_2.
\end{equation}
Thus the problem under consideration can be written as
\begin{eqnarray}\label{min-problem}
\begin{array}{ll}
\displaystyle\min_{x\in\R^d} &
\displaystyle \theta(x):=  \theta_1(x_1) + \theta_2(x_2) +  \phi(x_1,x_2)  \\[5pt]
\mbox{s.t.}
&   A_1 x_1 + A_2 x_2 =b.
\end{array}
\end{eqnarray}

Since $\theta_1$ and $\theta_2$ are closed convex functions,
there exist two symmetric positive semidefinite matrices $\Sigma_1$
and $\Sigma_2$ such that
\begin{equation}\label{def-sig-p} (x_1 - \hat{x}_1)^\top (w_1 -
\hat{w}_1) \ge \|x_1 - \hat{x}_1\|^2_{\Sigma_1},
\quad \forall\; x_1, \hat{x}_1 \in  {\rm dom}(\theta_1), \; w_1 \in \partial \theta_1(x_1), \hat{w}_1 \in \partial \theta_1(\hat{x}_1)
\end{equation}
and
\begin{equation}\label{def-sig-q}         (x_2 -  \hat{x}_2)^\top (w_2 - \hat{w}_2) \ge \|x_2 -  \hat{x}_2\|^2_{\Sigma_2},
\quad \forall\; x_2, \hat{x}_2 \in  {\rm dom}(\theta_2), \; w_2 \in \partial \theta_2(x_2), \hat{w}_2 \in \partial \theta_2(\hat{x}_2),
\end{equation}
where $\partial \theta_1$ and $\partial \theta_2$ are the subdifferential
mappings  of $\theta_1$ and $\theta_2$, respectively. By letting
\begin{equation}\label{def-sigma} x : = \left[\begin{array}{c}
x_1  \\
x_2
\end{array}\right], \;\;
\hat{x}:=
\left[\begin{array}{c}
\hat{x}_1  \\
\hat{x}_2
\end{array}\right],
\;\;
w : = \left[\begin{array}{c}
w_1  \\
w_2
\end{array}\right],
\;\;
\hat{w} : = \left[\begin{array}{c}
\hat{w}_1  \\
\hat{w}_2
\end{array}\right]
\,\, \hbox{and} \,\,
\Sigma: = \left[\begin{array}{cc}
\Sigma_1 &  0  \\
0       &  \Sigma_2
\end{array}\right],
\end{equation} we have
\begin{equation}\label{convexity}
(x -  \hat{x})^\top (w - \hat{w})
\ge \|x  - \hat{x} \|^2_{\Sigma}.
\end{equation}

The following lemma establishes the contraction property with respect to the solution set of
\eqref{min-problem} for the sequence generated by \eqref{eq:admm}, which plays an
important role in the subsequent analysis.

\begin{lemma}\label{Contraction}\label{lemma:contra}
Assume the $2$-block \red{proximal} ADMM  \eqref{eq:admm} is well defined for problem \eqref{min-problem}. Let $\{(x_1^k, x_2^k, \mu^k)\}$ be the sequence
generated by  \eqref{eq:admm}. Then, the following statements hold.
\item[{\rm (i)}] If  $(\bar x_1, \bar x_2,\bar \mu)$ is any given KKT point of problem \eqref{min-problem}, then
we have
\red{\begin{eqnarray}\label{contraction-ine}
&  & \Big(\frac{7}{8}\|x^k - \bar x\|_{H + \Sigma + \frac{4}{7}R}^2 +
\frac{1}{2}\|x_2^k -
\bar x_2\|_{H_{22}+\Sigma_2+\beta A_2^\top A_2}^2 + \frac{1}{2\beta}\|\mu^k - \bar \mu\|^2 + \frac{1}{2}\|x_2^k - x_2^{k-1}\|^2_{R_2}\Big) \nonumber \\
&  & \; - \Big(\frac{7}{8}\|x^{k+1} - \bar x\|_{H + \Sigma + \frac{4}{7}R}^2 +
\frac{1}{2}\|x_2^{k+1} -
\bar x_2\|_{H_{22}+\Sigma_2+\beta A_2^\top A_2}^2 + \frac{1}{2\beta}\|\mu^{k+1} - \bar \mu\|^2  + \frac{1}{2}\|x_2^{k+1} - x_2^k\|^2_{R_2}\Big) \nonumber \\
& & \ge \frac{1}{16}\|x^{k+1} - x^k\|^2_{H+\Sigma+8R} +
\frac{1}{6}\|x_2^{k+1} - x_2^k\|^2_{H_{22} + \Sigma_2 + 3\beta
A_2^\top A_2}
    + \frac{1}{2\beta}\|\mu^{k+1} - \mu^k\|^2.
\end{eqnarray}}
\item [{\rm (ii)}] It holds that
\begin{equation}\label{KKT}
\left\{
\begin{array}{l}
\displaystyle \lim_{k \rightarrow \infty} d(0, \; \partial \theta_1(x_1^{k+1})+ \nabla_{x_1}\phi(x_1^{k+1}, x_2^{k+1})- A_1^\top\mu^{k+1})= 0, \\[0.2cm]
\displaystyle \lim_{k \rightarrow \infty} d(0, \; \partial \theta_2(x_2^{k+1})+ \nabla_{x_2}\phi(x_1^{k+1}, x_2^{k+1})- A_2^\top\mu^{k+1})= 0, \\[0.2cm]
\displaystyle \lim_{k \rightarrow \infty}\|A_1x_1^{k+1} + A_2 x_2^{k+1} -b\| = 0,
\end{array}
\right.
\end{equation}
where $d(\cdot, \cdot)$ denotes the Euclidean distance of some point
to a set.
\end{lemma}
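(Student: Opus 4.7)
My plan is to establish (i) via a Lyapunov-type descent argument engineered to absorb the nonseparable $H_{12}$ coupling, and then deduce (ii) from summability.

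For (i), I would first write the first-order optimality conditions of the two subproblems: there exist $w_i^{k+1}\in\partial\theta_i(x_i^{k+1})$, $i=1,2$, such that
\begin{align*}
0 &= w_1^{k+1}+H_{11}x_1^{k+1}+H_{12}x_2^k+g_1-A_1^\top\mu^k+\beta A_1^\top(A_1x_1^{k+1}+A_2x_2^k-b)+R_1(x_1^{k+1}-x_1^k),\\
0 &= w_2^{k+1}+H_{12}^\top x_1^{k+1}+H_{22}x_2^{k+1}+g_2-A_2^\top\mu^k+\beta A_2^\top(A_1x_1^{k+1}+A_2x_2^{k+1}-b)+R_2(x_2^{k+1}-x_2^k).
\end{align*}
Using the dual update to replace $\mu^k-\beta(A_1x_1^{k+1}+A_2x_2^{k+1}-b)$ by $\mu^{k+1}$ puts the second identity into a clean form while introducing a residual $\beta A_1^\top A_2(x_2^k-x_2^{k+1})$ in the first. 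Subtracting the KKT relations \eqref{kkt} at $(\bar x,\bar\mu)$, inner-producting with $x_1^{k+1}-\bar x_1$ and $x_2^{k+1}-\bar x_2$ respectively, summing, and invoking the convexity estimate \eqref{convexity}, I would obtain a master inequality of the form
\[
\|x^{k+1}-\bar x\|^2_{\Sigma+H}+(x^{k+1}-\bar x)^\top R(x^{k+1}-x^k)+\tfrac{1}{\beta}(\mu^{k+1}-\mu^k)^\top(\mu^{k+1}-\bar\mu)+\Xi\le 0,
\]
where $\Xi$ collects the coupling residuals $(x_1^{k+1}-\bar x_1)^\top H_{12}(x_2^k-x_2^{k+1})$ and $\beta(x_1^{k+1}-\bar x_1)^\top A_1^\top A_2(x_2^k-x_2^{k+1})$.

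To put this into the telescoping form displayed in \eqref{contraction-ine}, I would apply the identity $2\langle u,u-v\rangle=\|u\|^2-\|v\|^2+\|u-v\|^2$ to the $R$- and $\mu$-inner products, producing the $\tfrac{1}{2\beta}\|\mu-\bar\mu\|^2$ and the $R$-weighted $\|x-\bar x\|^2$ pieces of the Lyapunov functional. The $\beta A_1^\top A_2$ residual I would reshape by substituting $A_1(x_1^{k+1}-\bar x_1)=\tfrac{1}{\beta}(\mu^k-\mu^{k+1})-A_2(x_2^{k+1}-\bar x_2)$, which turns it into the telescoping $\tfrac{1}{2}\|x_2-\bar x_2\|^2_{\beta A_2^\top A_2}$ plus controllable remainders. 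The central obstacle is the term $(x_1^{k+1}-\bar x_1)^\top H_{12}(x_2^k-x_2^{k+1})$, which has no natural telescoping in the $x_1$-direction. To dispose of it I would derive an \emph{auxiliary} inequality by differencing the $x_2$-subproblem optimality at iterations $k+1$ and $k$, inner-producting with $x_2^{k+1}-x_2^k$, and applying convexity and the $2\langle u,u-v\rangle$ identity on the $R_2$ piece; this produces the memory telescope $\tfrac{1}{2}\|x_2^{k+1}-x_2^k\|^2_{R_2}-\tfrac{1}{2}\|x_2^k-x_2^{k-1}\|^2_{R_2}$ that accounts for the memory term of the Lyapunov functional, together with a $\|x_2^{k+1}-x_2^k\|^2_{H_{22}+\Sigma_2}$ descent contribution. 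I would then close with Young's inequality $2\langle a,b\rangle\le\alpha\|a\|^2+\alpha^{-1}\|b\|^2$ at suitably chosen parameters (roughly $\tfrac{1}{8}$ and $\tfrac{1}{2}$) to absorb the residual $H_{12}$-cross-terms into $\|x^{k+1}-x^k\|^2_{H+\Sigma}$ and $\|x^{k+1}-x^k\|^2_R$; the surplus after absorption produces the fractions $\tfrac{7}{8}$, $\tfrac{4}{7}$, $\tfrac{1}{16}$, $\tfrac{1}{6}$, $3$ and $8$ in the statement.

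Part (ii) then follows directly: telescoping \eqref{contraction-ine} over $k$ yields $\sum_k (\|x^{k+1}-x^k\|^2_{H+\Sigma+8R}+\|x_2^{k+1}-x_2^k\|^2_{H_{22}+\Sigma_2+3\beta A_2^\top A_2}+\|\mu^{k+1}-\mu^k\|^2)<\infty$, so all three differences tend to $0$. Since $A_1x_1^{k+1}+A_2x_2^{k+1}-b=-\tfrac{1}{\beta}(\mu^{k+1}-\mu^k)$, the third identity in \eqref{KKT} is immediate, and substituting the vanishing differences into the two $\mu$-eliminated optimality relations bounds the subdifferential distances in the first two identities of \eqref{KKT} by sums of these vanishing quantities.
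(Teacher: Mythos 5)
Your plan reproduces the paper's argument essentially step for step: the same optimality-condition subtraction and convexity estimate, the same telescoping identities for the $R$- and $\mu$-terms, the same substitution of the dual residual to telescope the $\beta A_1^\top A_2$ cross term, the same auxiliary inequality obtained by differencing the $x_2$-subproblem optimality conditions at consecutive iterations (which is precisely where the $R_2$ memory term and the $H_{22}+\Sigma_2$ descent come from), and the same Young-type absorption producing the constants. The only point glossed over is in part (ii): summability gives vanishing only in the seminorms $\|\cdot\|_{H+\Sigma+8R}$ etc., so one still needs the short extra step (as in the paper) converting this into $H_{12}(x_2^{k+1}-x_2^k)\to 0$, $R_i(x_i^{k+1}-x_i^k)\to 0$ and $A_2(x_2^{k+1}-x_2^k)\to 0$ before the KKT residuals can be bounded.
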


\begin{proof}(i) \red{From the first order optimality condition of \eqref{eq:admm}, we get}
\red{\begin{equation*}
\left\{
\begin{array}{l}
0 \in \partial \theta_1(x_1^{k+1}) + \nabla_{x_1}\phi(x_1^{k+1}, x_2^k) - A_1^\top \mu^k + \beta A_1^\top(A_1x_1^{k+1} + A_2x_2^k - b) + R_1(x_1^{k+1} - x_1^k), \\[0.2cm]
0 \in \partial \theta_2(x_2^{k+1}) + \nabla_{x_2}\phi(x_1^{k+1}, x_2^{k+1})
- A_2^\top \mu^k + \beta A_2^\top(A_1x_1^{k+1} + A_2x_2^{k+1} - b)+ R_2(x_2^{k+1} - x_2^k),
\end{array}
\right.
\end{equation*}}
where $\phi(\cdot, \cdot)$ is defined in \eqref{def-phi}. Using
the definitions of $\phi$ and $\mu^{k+1}$, the above formulas imply that
\red{\begin{equation}\label{opt-cond}
\left\{
\begin{array}{l}
- \nabla_{x_1}\phi(x_1^{k+1}, x_2^{k+1}) +   A_1^\top \mu^{k+1} + (H_{12} + \beta A_1^\top A_2)(x_2^{k+1} - x_2^k) - R_1(x_1^{k+1} - x_1^k) \in \partial \theta_1(x_1^{k+1}), \\[0.2cm]
- \nabla_{x_2}\phi(x_1^{k+1}, x_2^{k+1}) +   A_2^\top \mu^{k+1} - R_2(x_2^{k+1} - x_2^k) \in
\partial \theta_2(x_2^{k+1}).
\end{array}
\right.
\end{equation}}
Since $(\bar x_1, \bar x_2,\bar \mu)$ is a KKT point of  \eqref{min-problem}, we have that
\begin{equation}\label{ine-pq}
\left\{
\begin{array}{l}
- \nabla_{x_1}\phi(\bar x_1, \bar x_2) +   A_1^\top \bar \mu  \in \partial \theta_1(\bar x_1), \\[0.2cm]
- \nabla_{x_2}\phi(\bar x_1, \bar x_2) +   A_2^\top \bar \mu  \in \partial \theta_2(\bar x_2), \\[0.2cm]
A_1\bar x_1 + A_2\bar x_2 =b.
\end{array}
\right.
\end{equation}
From \eqref{convexity}, \eqref{opt-cond} and \eqref{ine-pq}, we
obtain
\red{\begin{eqnarray}\label{ppqq-ine}
&   &  \hspace{-0.9cm}  \|x^{k+1} - \bar x\|^2_{\Sigma} \nn \\
&  &  \hspace{-0.9cm} \le (x_1^{k+1} - \bar x_1)^\top \Big\{\big[- \nabla_{x_1}\phi(x_1^{k+1}, x_2^{k+1}) +   A_1^\top \mu^{k+1} + (H_{12} + \beta A_1^\top A_2)(x_2^{k+1} - x_2^k)- R_1(x_1^{k+1} - x_1^k) \big] \nn \\
&  &  \hspace{-0.4cm} - \big[- \nabla_{x_1}\phi(\bar x_1, \bar x_2) +   A_1^\top
\bar \mu\big]\Big\} +   (x_2^{k+1} - \bar x_2)^\top \Big\{\big[-
\nabla_{x_2}\phi(x_1^{k+1}, x_2^{k+1}) +
A_2^\top \mu^{k+1}- R_2(x_2^{k+1} - x_2^k)  \big]  \nn \\
&  &  \hspace{-0.4cm}  -\big[- \nabla_{x_2}\phi(\bar x_1, \bar x_2) +   A_2^\top \bar \mu \big]   \Big\} \nonumber \\
 &   &  \hspace{-0.9cm} = - (x_1^{k+1} - \bar x_1)^\top A_1^\top (\bar \mu - \mu^{k+1})- (x_2^{k+1} - \bar x_2)^\top A_2^\top (\bar \mu - \mu^{k+1})- (x^{k+1} - \bar x)^TR(x^{k+1} -x^k) \nonumber \\
 &     &  \hspace{-0.4cm}    + (x_1^{k+1} - \bar x_1)^\top (H_{12} + \beta A_1^\top A_2)(x_2^{k+1} - x_2^k) -(x^{k+1} - \bar x)^\top \big(\nabla \phi(x_1^{k+1}, x_2^{k+1})- \nabla \phi(\bar x_1, \bar x_2)\big) \nonumber \\
 &   & \hspace{-0.9cm} = (x^{k+1} -x^k)^TR(\bar x - x^{k+1}) + \frac{1}{\beta}(\mu^{k+1} - \mu^k)^\top (\bar \mu - \mu^{k+1})  + (x_1^{k+1} - \bar x_1)^\top (H_{12} + \beta A_1^\top A_2) (x_2^{k+1} - x_2^k) \nonumber \\
 &     &  \hspace{-0.4cm}
 - \|x^{k+1} - \bar x\|^2_H.
\end{eqnarray}}
By simple manipulations and using $A_1\bar x_1 + A_2 \bar x_2 =
b$, we can see that
\begin{eqnarray}\label{x1x2-ine}
&  & \beta (x_1^{k+1} - \bar x_1)^\top A_1^\top A_2(x_2^{k+1} - x_2^k) \nonumber \\
& = & -\beta(A_2x_2^{k+1} - A_2 \bar{x}_2)^\top(A_2x_2^{k+1} -
A_2 x_2^k) +
\beta(A_1 x_1^{k+1} + A_2 x_2^{k+1} - b)^\top (A_2 x_2^{k+1} - A_2x_2^k)\nonumber\\
& = &  \frac{\beta}{2}(\|A_2x_2^k - A_2\bar x_2\|^2 - \|A_2x_2^{k+1}
- A_2\bar x_2\|^2) - \frac{\beta}{2}\|A_2x_2^{k+1}
       - A_2x_2^k\|^2 \nonumber \\
&  & \quad + \beta(A_1 x_1^{k+1} + A_2x_2^{k+1} - b)^\top
(A_2x_2^{k+1} - A_2 x_2^k),
\end{eqnarray}
\red{\begin{equation} \label{x-equ}  (x^{k+1} -x^k)^TR(\bar x - x^{k+1})
= \frac{1}{2}(\|x^k - \bar x\|_R^2 - \|x^{k+1} - \bar x\|_R^2 - \|x^{k+1} - x^k\|_R^2)
\end{equation}}
and
\begin{equation} \label{mu-ine} \frac{1}{\beta}(\mu^{k+1} - \mu^k)^\top (\bar \mu - \mu^{k+1})
   = \frac{1}{2\beta}(\|\mu^k - \bar \mu\|^2 - \|\mu^{k+1} - \bar \mu\|^2 - \|\mu^{k+1} - \mu^k\|^2). \end{equation}
  On the other hand, it follows from \eqref{opt-cond} that
\red{\begin{equation*}\left\{
	\begin{array}{l}
- \nabla_{x_2}\phi(x_1^{k+1}, x_2^{k+1}) + A_2^\top \mu^{k+1} - R_2(x_2^{k+1} - x_2^k)\in
\partial \theta_2(x_2^{k+1}),  \\[0.2cm]
-\nabla_{x_2}\phi(x_1^k, x_2^k) + A_2^\top \mu^k - R_2(x_2^k - x_2^{k-1})\in \partial
\theta_2(x_2^k),
\end{array}
\right.
\end{equation*}}
which, together with \eqref{def-sig-q}, implies
\red{\begin{eqnarray}\label{ine-phi}
&  &  (x_2^{k+1}- x_2^k)^\top \big[ - \nabla_{x_2}\phi(x_1^{k+1},
x_2^{k+1}) + A_2^\top \mu^{k+1} - R_2 (x_2^{k+1} - x_2^k) +  \nabla_{x_2}\phi(x_1^k, x_2^k) -
A_2^\top
\mu^k + R_2 (x_2^k - x_2^{k-1}) \big]  \nonumber \\
&  & \quad \ge \|x_2^{k+1} - x_2^k\|^2_{\Sigma_2}.
\end{eqnarray}}
\red{Recall that
\[ \mu^{k+1} - \mu^k = -\beta(A_1 x_1^{k+1} + A_2 x_2^{k+1} - b)  \qquad \hbox{and} \qquad
 \nabla_{x_2}\phi(x_1, x_2) = H_{12}^\top x_1 + H_{22}x_2 + g_2. \]}
\red{Then, by using Cauchy-Schwarz inequality, the inequality
\eqref{ine-phi}} gives
\red{\begin{eqnarray}
&  &  \beta(A_1 x_1^{k+1} + A_2 x_2^{k+1} - b)^\top (A_2x_2^{k+1} -
A_2x_2^k) \nonumber\\
&  & \; \le - \|x_2^{k+1} - x_2^k\|_{H_{22}+\Sigma_2}^2 +
(x_2^{k+1}- x_2^k)^\top H_{12}^\top (x_1^k- x_1^{k+1})
- \|x_2^{k+1} - x_2^k\|^2_{R_2} + (x_2^{k+1} - x_2^k)^TR_2(x_2^k - x_2^{k-1}) \nonumber\\
&  & \; \le - \|x_2^{k+1} - x_2^k\|_{H_{22}+\Sigma_2}^2 +
(x_2^{k+1}- x_2^k)^\top H_{12}^\top (x_1^k- x_1^{k+1})
- \frac{1}{2}\|x_2^{k+1} - x_2^k\|^2_{R_2} +\frac{1}{2} \|x_2^k - x_2^{k-1}\|_{R_2}^2. \nonumber
\end{eqnarray}}
Substituting \eqref{x1x2-ine}, \red{\eqref{x-equ},} \eqref{mu-ine} and the above
inequality into \eqref{ppqq-ine}, we further get
\red{\begin{eqnarray}\label{contra-ine1}
&  & \frac{1}{2}\big(\|x^k - \bar x\|^2_R - \|x^{k+1} -
\bar x\|^2_R\big) + \frac{1}{2\beta}\big(\|\mu^k - \bar \mu\|^2 - \|\mu^{k+1} -
\bar \mu\|^2\big)
 + \frac{\beta}{2}\big(\|A_2 x_2^k - A_2 \bar x_2\|^2 \nonumber \\
&  & \quad - \|A_2 x_2^{k+1} - A_2 \bar x_2 \|^2\big) + \frac{1}{2}\big(\|x_2^k - x_2^{k-1}\|_{R_2}^2 -  \|x_2^{k+1} - x_2^k\|^2_{R_2}\big) \nonumber \\
&  & \ge \|x^{k+1} - \bar x\|_{H+\Sigma}^2 +\frac{1}{2}\|x^{k+1} - x^k\|_R^2
+\frac{1}{2\beta}\|\mu^{k+1} - \mu^k\|^2 + \frac{1}{2}
         \|x_2^{k+1} - x_2^k\|_{\beta A_2^\top A_2}^2 \nonumber \\
&  & \quad - (x_2^{k+1} - x_2^k)^\top H_{12}^\top (x_1^k - \bar x_1)  +\|x_2^{k+1} - x_2^k\|^2_{H_{22}+\Sigma_2}.
\end{eqnarray}}
Moreover, it follows from Cauchy-Schwarz inequality and $H + \Sigma \succeq 0$ that
\red{\begin{eqnarray}\label{xH-ine1}
&  &\hspace{-0.9cm} (x_2^{k+1} - x_2^k)^\top H_{12}^\top(x_1^k -
\bar x_1)  - \|x_2^{k+1} - x_2^k\|^2_{H_{22}+\Sigma_2}
 \nonumber \\
&  &\hspace{-0.9cm} = (x_2^{k+1} - x_2^k)^\top H_{12}^\top(x_1^k -
\bar x_1) + (x_2^{k+1} - x_2^k)^\top (H_{22}+\Sigma_2)(x_2^k - \bar x_2) - (x_2^{k+1} - x_2^k)^\top (H_{22}+\Sigma_2)(x_2^{k+1} - \bar x_2)
\nonumber \\
&  & \hspace{-0.9cm}= \left[
          \begin{array}{c}
           0  \\
           x_2^{k+1} - x_2^k
            \end{array}
           \right]^\top (H+\Sigma) (x^k - \bar x) - (x_2^{k+1} - x_2^k)^\top (H_{22}+\Sigma_2)(x_2^{k+1} - \bar x_2) \nonumber \\
&  & \hspace{-0.9cm}\le \frac{3}{4}\|x^k - \bar x\|_{H+\Sigma}^2 +
\frac{1}{3}\|x_2^{k+1} - x_2^k\|^2_{H_{22}+\Sigma_2}
         - \frac{1}{2}\|x_2^{k+1} - x_2^k\|^2_{H_{22}+\Sigma_2}  \nonumber \\
&  & \hspace{-0.9cm}\quad  + \frac{1}{2}\big(\|x_2^k - \bar x_2\|^2_{H_{22}+\Sigma_2} - \|x^{k+1}_2 - \bar x_2\|^2_{H_{22}+\Sigma_2}\big)\nonumber \\
&  & \hspace{-0.9cm}= \frac{3}{4}\|x^k - \bar x\|_{H+\Sigma}^2
         - \frac{1}{6}\|x_2^{k+1} - x_2^k\|^2_{H_{22}+\Sigma_2}   + \frac{1}{2}\big(\|x_2^k - \bar x_2\|^2_{H_{22}+\Sigma_2}
         - \|x^{k+1}_2 - \bar x_2\|^2_{H_{22}+\Sigma_2}\big).
\end{eqnarray}}
Using the elementary inequality $2(\|a\|_{H+\Sigma}^2 +
\|b\|_{H+\Sigma}^2) \ge \|a-b\|_{H+\Sigma}^2$, we obtain
\begin{eqnarray}\label{xH-ine2}
&  & \|x^{k+1} - \bar x\|^2_{H+\Sigma} - \frac{3}{4}\|x^k - \bar x\|_{H+\Sigma}^2 \nonumber \\
&  & = \frac{7}{8}\big(\|x^{k+1} - \bar x\|^2_{H+ \Sigma} - \|x^k -
\bar x\|_{H+\Sigma} ^2\big)
+ \frac{1}{8}\big(\|x^{k+1} - \bar x\|^2_{H+ \Sigma} + \|x^k - \bar x\|_{H+ \Sigma}^2\big) \nonumber \\
&  & \ge \frac{7}{8}\big(\|x^{k+1} - \bar x\|^2_{H+ \Sigma} - \|x^k
- \bar x\|_{H+ \Sigma}^2\big) + \frac{1}{16} \|x^{k+1} - x^k \|_{H+
\Sigma}^2.
\end{eqnarray}
Substituting \eqref{xH-ine1} and \eqref{xH-ine2} into
\eqref{contra-ine1}, we get \eqref{contraction-ine}.

\medskip

\noindent  (ii) From \eqref{contraction-ine},  we can immediately see that
\red{\begin{equation}\label{sum}
\sum_{k=1}^\infty \big(\frac{1}{16}\|x^{k+1} - x^k\|^2_{H+\Sigma+8R} +
\frac{1}{6}\|x_2^{k+1} - x_2^k\|^2_{H_{22} + \Sigma_2 + 3\beta
A_2^\top A_2}
    + \frac{1}{2\beta}\|\mu^{k+1} - \mu^k\|^2\big)<\infty,
\end{equation}}
and it therefore holds  that
\red{\begin{equation} \label{limt2} \lim_{k \rightarrow \infty}\|x^{k+1} - x^k\|_{H+\Sigma+8R} = 0, \quad
\lim_{k \rightarrow \infty}\|x_2^{k+1} - x_2^k\|_{H_{22} + \Sigma_2 + 3\beta A_2^\top A_2} = 0
 \end{equation}}
and
\begin{equation} \label{lim-mu} \lim_{k \rightarrow \infty}
 \|A_1 x_1^{k+1} + A_2 x_2^{k+1} - b\|=\lim_{k \rightarrow
\infty}{1\over \beta}\|\mu^{k+1} - \mu^k\|
    = 0.\end{equation}
Since \red{$H+\Sigma$, $R$} and $H_{22}+\Sigma_2$ are positive semidefinite matrices, we deduce from
\eqref{limt2} that
\red{\begin{equation}\label{lim-sequence}
\left\{
\begin{array}{l}
\displaystyle \lim_{k \rightarrow \infty} (H +\Sigma)\,(x^{k+1} - x^k) =0, \\[0.2cm]
\displaystyle \lim_{k \rightarrow \infty} R\,(x^{k+1} - x^k) =0, \\[0.2cm]
\displaystyle \lim_{k \rightarrow \infty} \|x_2^{k+1} - x_2^k\|_{H_{22}+\Sigma_2}=0, \\[0.2cm]
\displaystyle  \lim_{k \rightarrow \infty}\|A_2(x_2^{k+1} - x_2^k)\| = 0,
\end{array}
\right.
\end{equation}
and hence
\begin{equation}\label{lim-x1-x2}
\lim_{k \rightarrow \infty}  (H_{11}+\Sigma_1)(x_1^{k+1} - x_1^k) + H_{12}(x_2^{k+1} - x_2^k)=0.
\end{equation}
 Using the triangle inequality, we have
 \[\left\|\left[
 \begin{array}{c}
 x_1^{k+1} - x_1^k  \\
 0
 \end{array}
 \right]\right\|_{H+\Sigma}   \le  \left\|\left[
 \begin{array}{c}
 x_1^{k+1} - x_1^k \\
 x_2^{k+1} - x_2^k
 \end{array}
 \right]\right\|_{H+\Sigma} + \left\|\left[
 \begin{array}{c}
 0  \\
 x_2^{k+1} - x_2^k
 \end{array}
 \right]\right\|_{H+\Sigma},\]
 and thus from \eqref{lim-sequence}, it follows
 \[ \lim_{k \rightarrow \infty} \|x_1^{k+1} - x_1^k\|_{H_{11}+\Sigma_1} \le
 \lim_{k \rightarrow \infty} \big(\|x^{k+1} - x^k\|_{H+\Sigma} + \|x_2^{k+1} - x_2^k\|_{H_{22}+\Sigma_2}\big) =0.\]
 From \eqref{lim-sequence}, \eqref{lim-x1-x2} and the above formula, we obtain
\begin{equation}\label{lim-x1-x2-R}
\left\{
\begin{array}{l}
\displaystyle \lim_{k \rightarrow \infty} R_1(x_1^{k+1} - x_1^k) =0
, \\[0.2cm]
\displaystyle \lim_{k \rightarrow \infty} R_2(x_2^{k+1} - x_2^k) =0
, \\[0.2cm]
\displaystyle  \lim_{k \rightarrow \infty}  H_{12}(x_2^{k+1} - x_2^k) =-\lim_{k\rightarrow \infty}(H_{11}+\Sigma_1)(x_1^{k+1} - x_1^k) =0, \\[0.2cm]
\displaystyle  \lim_{k \rightarrow \infty}  A_2(x_2^{k+1} - x_2^k) = 0.
\end{array}
\right.
\end{equation} }
This, together with \eqref{opt-cond} and \eqref{lim-mu}, proves the
assertion \eqref{KKT}. \quad
\end{proof}

\medskip

To establish the convergence of ADMM, we make the following assumption:
\begin{assumption}\label{asmp:1}
We assume
    \begin{equation}\label{ADMM2}
    \left[\begin{array}{cc}
    H_{11} & 0  \\
    0 & H_{22}
    \end{array}\right] +  \left[\begin{array}{cc}
    \Sigma_1 & 0  \\
    0        & \Sigma_2
    \end{array}\right] +
    \left[\begin{array}{cc}
    A_1\zz A_1 & 0  \\
    0 & A_2\zz A_2
    \end{array}\right] \red{+  \left[\begin{array}{cc}
    R_1  & 0  \\
    0    & R_2
    \end{array}\right]}
    \succ 0.
    \end{equation}
\end{assumption}

 It is worth emphasizing that Assumption \ref{asmp:1} means that the subproblems of   $2$-block \red{proximal} ADMM admit unique solutions, because Assumption \ref{asmp:1} holds
 if and only if
    \begin{equation*}
    \left[\begin{array}{cc}
    H_{11} & 0  \\
    0 & H_{22}
    \end{array}\right] +  \left[\begin{array}{cc}
    \Sigma_1 & 0  \\
    0        & \Sigma_2
    \end{array}\right] +  \beta
    \left[\begin{array}{cc}
    A_1\zz A_1 & 0  \\
    0 & A_2\zz A_2
    \end{array}\right]\red{+  \left[\begin{array}{cc}
    	R_1  & 0  \\
    	0    & R_2
    	\end{array}\right]}
    \succ 0
    \end{equation*}
    for any $\beta>0$. However, the optimal solution to original problem \eqref{min-problem}
    is not necessarily unique.

    \red{We are now ready to} prove the iterate convergence of the $2$-block \red{proximal} ADMM for the nonseparable
    convex optimization model  \eqref{min-problem}.

\begin{theorem}\label{Conver-Alg}
Suppose Assumption {\em\ref{asmp:1}} holds. Let $\{(x_1^k, x_2^k,
\mu^k)\}$ be generated by the \red{proximal} ADMM \eqref{eq:admm} with $n=2$ to solve problem
\eqref{min-problem}. Then the sequence $\{(x_1^k, x_2^k,\mu^k)\}$
converges to a KKT point of
\eqref{min-problem}.
\end{theorem}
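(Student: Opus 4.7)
\textbf{Proof proposal for Theorem~\ref{Conver-Alg}.} My plan follows a standard three-step Fej\'er argument resting entirely on the contraction inequality \eqref{contraction-ine} and the asymptotic stationarity \eqref{KKT} of Lemma~\ref{lemma:contra}: (a) establish boundedness of $\{(x^k,\mu^k)\}$; (b) identify every cluster point as a KKT solution; (c) upgrade from subsequential to full convergence.

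For step (a), fix any KKT point $(\bar x,\bar\mu)$ and denote by $V_k(\bar x,\bar\mu)$ the left-hand side of \eqref{contraction-ine}. This quantity is monotonically nonincreasing and hence bounded, which immediately bounds $\{\mu^k\}$, $\{\|x^k-\bar x\|_{H+\Sigma+R}\}$, and $\{\|A_2(x_2^k-\bar x_2)\|\}$. Lemma~\ref{lemma:contra}(ii) gives $\|A_1x_1^k+A_2x_2^k-b\|\to 0$, which together with the bound on $\|A_2 x_2^k\|$ bounds $\|A_1 x_1^k\|$. Since $H\succeq 0$, the seminorm $\|\cdot\|_{H+\Sigma+R}^2$ dominates the block-diagonal $\|\cdot\|_{\Sigma+R}^2$, so each $\|x_i^k-\bar x_i\|_{\Sigma_i+R_i}$ is bounded. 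Combining these with Assumption~\ref{asmp:1} -- which asserts positive-definiteness of the block-diagonal matrix $\mathrm{diag}(H_{ii}+\Sigma_i+R_i+A_i\zz A_i)$ -- forces $\{x^k\}$ to be bounded, so $\{(x^k,\mu^k)\}$ admits a cluster point $(x^\infty,\mu^\infty)$. For step (b), using continuity of $\nabla\phi$ together with closedness of the graphs of the convex subdifferentials $\partial\theta_i$, one passes to the limit along the convergent subsequence in \eqref{KKT} and concludes $(x^\infty,\mu^\infty)$ satisfies the KKT conditions \eqref{kkt}.

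For step (c), re-apply \eqref{contraction-ine} with $(\bar x,\bar\mu)=(x^\infty,\mu^\infty)$. The resulting $V_k^\infty$ is nonincreasing and has a subsequence $V_{k_j}^\infty\to 0$: along $k_j$ each squared seminorm in $x^{k_j}-x^\infty$ and $\mu^{k_j}-\mu^\infty$ vanishes by the subsequential convergence, while $\|x_2^k-x_2^{k-1}\|_{R_2}\to 0$ on the whole sequence by the summability \eqref{sum}. Monotonicity promotes this to $V_k^\infty\to 0$, yielding $\mu^k\to\mu^\infty$, $\|x^k-x^\infty\|_{H+\Sigma+R}\to 0$, and $\|x_2^k-x_2^\infty\|_{H_{22}+\Sigma_2+\beta A_2\zz A_2}\to 0$. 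Feasibility $\|A_1 x_1^k+A_2 x_2^k-b\|\to 0$ with $A_1 x_1^\infty+A_2 x_2^\infty=b$ then gives $\|A_1(x_1^k-x_1^\infty)\|\to 0$. Combining the $x_2$-bound with $\|A_2(x_2^k-x_2^\infty)\|\to 0$ and Assumption~\ref{asmp:1} applied to the second block yields $\|x_2^k-x_2^\infty\|\to 0$; the residual cross term $2(x_1^k-x_1^\infty)\zz H_{12}(x_2^k-x_2^\infty)$ in the expansion of $\|\cdot\|_{H+\Sigma+R}^2$ then vanishes, so $\|x_1^k-x_1^\infty\|_{H_{11}+\Sigma_1+R_1}\to 0$, and Assumption~\ref{asmp:1} applied to the first block finally gives $\|x_1^k-x_1^\infty\|\to 0$.

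The main obstacle I anticipate is translating the seminorm bounds (with the non-block-diagonal matrix $H$) into a genuine Euclidean bound on $x^k$ in step (a), and the careful ordering in step (c) in which one must extract $x_2$-convergence first, kill the cross term, and only then extract $x_1$-convergence; both steps crucially exploit the additive block-diagonal form of Assumption~\ref{asmp:1} together with the asymptotic feasibility from Lemma~\ref{lemma:contra}(ii). Beyond these, the closedness-of-subdifferential argument in (b) and the Fej\'er upgrade in (c) should be routine.
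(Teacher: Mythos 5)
Your overall architecture is exactly the paper's: use the Lyapunov quantity from Lemma~\ref{lemma:contra}(i) to get boundedness, identify a cluster point as a KKT point via \eqref{KKT} and closedness of the subdifferential graphs, then re-apply \eqref{contraction-ine} centered at that cluster point to upgrade subsequential to full convergence. Your steps (b) and (c) are sound as outlined; in particular the ordering in (c) --- extract $x_2$-convergence first, kill the cross term, then extract $x_1$ --- is precisely what the paper does.

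The genuine gap is in step (a), in passing from the seminorm bounds to boundedness of $x_1^k$. You lower-bound $\|x^k-\bar x\|_{H+\Sigma+R}^2$ by the block-diagonal $\|x^k-\bar x\|_{\Sigma+R}^2$, so for the first block you only control $\|x_1^k-\bar x_1\|_{\Sigma_1+R_1}$ and $\|A_1(x_1^k-\bar x_1)\|$, i.e.\ $\|x_1^k-\bar x_1\|_{\Sigma_1+R_1+A_1^\top A_1}$. But Assumption~\ref{asmp:1} asserts $H_{11}+\Sigma_1+R_1+A_1^\top A_1\succ 0$, not $\Sigma_1+R_1+A_1^\top A_1\succ 0$: take $\theta_1\equiv 0$ (so one may have $\Sigma_1=0$), $R_1=0$, $A_1$ rank-deficient and $H_{11}\succ 0$; then your chain of bounds says nothing about the component of $x_1^k$ along $\ker A_1$, and the step fails. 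The missing idea is how to retain the $H_{11}$ contribution despite the off-diagonal coupling $H_{12}$. The paper first gets $x_2^k$ bounded (from the second Lyapunov term, whose matrix $H_{22}+\Sigma_2+\beta A_2^\top A_2+R_2$ is positive definite) and then applies the triangle inequality for the seminorm $\|\cdot\|_{H+\Sigma+R}$ to a zero-padded vector,
\[
\|x_1^k-\bar x_1\|_{H_{11}+\Sigma_1+R_1}=\left\|\begin{bmatrix}x_1^k-\bar x_1\\ 0\end{bmatrix}\right\|_{H+\Sigma+R}\le \|x^k-\bar x\|_{H+\Sigma+R}+\|x_2^k-\bar x_2\|_{H_{22}+\Sigma_2+R_2},
\]
whose right-hand side is bounded; combined with the bound on $\|A_1(x_1^k-\bar x_1)\|$ this controls $\|x_1^k-\bar x_1\|_{H_{11}+\Sigma_1+\beta A_1^\top A_1+R_1}$, to which Assumption~\ref{asmp:1} applies. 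With this one repair (whose analogue you already use implicitly in step (c)), your proof coincides with the paper's.
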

\begin{proof}It follows from \eqref{contraction-ine}
that the sequences \red{$\{(H+\Sigma+R)x^{k+1}\}$},
\red{$\{(H_{22}+\Sigma_2+\beta A_2^\top A_2 + R_2)x_2^{k+1}\}$}
and $\{\mu^{k+1}\}$ are all bounded.
Since \red{$H_{22} + \Sigma_2 + \beta A_2^\top A_2 + R_2$ is positive definite},
we know
$\{x_2^{k+1}\}$ is bounded. Note
that $A_1 \bar x_1 + A_2 \bar x_2 = b$.  Using
the triangle \red{inequality}
\begin{eqnarray}
\|A_1(x_1^{k+1} -  \bar x_1)\| & \le & \|A_1 x_1^{k+1} + A_2
x_2^{k+1}-(A_1\bar x_1 + A_2\bar x_2)\| + \|A_2(x_2^{k+1} - \bar
x_2)\|
\nonumber\\
&  = & \|A_1 x_1^{k+1} + A_2 x_2^{k+1} - b\| + \|A_2(x_2^{k+1} -
\bar x_2)\|\nonumber\\
&=& {1\over \beta} \|\mu^k-\mu^{k+1}\| + \|A_2(x_2^{k+1} -
\bar x_2)\| \nonumber
\end{eqnarray}
and
\red{\begin{eqnarray}
\|x_1^{k+1} - \bar x_1\|_{H_{11}+\Sigma_1 + R_1}	  & = & \left\|\left[
\begin{array}{c}
x_1^{k+1} - \bar x_1   \\
0
\end{array}
\right]\right\|_{H+\Sigma+R}\nonumber \\
	& \le &   	\left\|\left[
	\begin{array}{c}
	x_1^{k+1} - \bar x_1   \\
	x_2^{k+1} - \bar x_2
	\end{array}
	\right]\right\|_{H+\Sigma+R} +
		\left\|\left[
		\begin{array}{c}
		0  \\
		x_2^{k+1} - \bar x_2
		\end{array}
		\right]\right\|_{H+\Sigma+R}                        \nonumber \\
	& = & \|x^{k+1} - \bar x\|_{H + \Sigma + R} +  \|x_2^{k+1} - \bar x_2\|_{H_{22}+\Sigma_2 + R_2}, \nonumber\end{eqnarray} }
we further obtain the boundedness of the sequences $\{A_1x_1^{k+1}\}$
and \red{$\{(H_{11}+\Sigma_1 + R_1)x_1^{k+1}\}$},
and hence \red{$\{(H_{11} +\Sigma_1+ \beta A_1^\top A_1 + R_1)x_1^{k+1}\}$} is bounded. Together with the positive definiteness of \red{$H_{11} +
\Sigma_1 +\beta A_1^\top A_1 + R_1$}, this implies the
boundedness of $\{x_1^{k+1}\}$. Thus, the sequence $\{(x_1^k, x_2^k,
\mu^k)\}$ is bounded and there exists a triple $(x_1^{\infty},
x_2^{\infty}, \mu^{\infty})$ and a subsequence $\{k_i\}$ such that
\[
\lim_{i\rightarrow \infty} x_1^{k_i}=x_1^\infty,\qquad
\lim_{i\rightarrow \infty} x_2^{k_i}=x_2^\infty \qquad
\hbox{and} \qquad \lim_{i\rightarrow \infty} \mu^{k_i}=\mu^\infty.
\]
Setting $k =k_i-1$ and invoking the upper semicontinuity of
$\partial \theta_1$ and $\partial \theta_2$  in \eqref{KKT}, we
then obtain
\begin{equation*}
\left\{
\begin{array}{l}
- \nabla_{x_1}\phi(x_1^{\infty}, x_2^{\infty}) +  A_1^\top \mu^{\infty} \in \partial \theta_1(x_1^{\infty}), \\[0.2cm]
- \nabla_{x_2}\phi(x_1^{\infty}, x_2^{\infty}) +  A_2^\top \mu^{\infty} \in \partial \theta_2(x_2^{\infty}), \\[0.2cm]
A_1 x_1^{\infty} + A_2 x_2^{\infty} - b = 0,
\end{array}
\right.
\end{equation*}
which means $(x_1^{\infty}, x_2^{\infty}, \mu^{\infty})$ is a KKT
point of \eqref{min-problem}. Hence \eqref{contraction-ine} is also
valid if $(\bar{x}_1,{\bar x}_2, \bar{\mu})$ is replaced by
$(x_1^{\infty}, x_2^{\infty}, \mu^{\infty})$. Therefore, it holds
for any $k\geq k_i$ that
\red{\begin{eqnarray}
&  & \frac{7}{8}\|x^{k+1} - x^{\infty}\|_{H + \Sigma + \frac{4}{7}R}^2 +
\frac{1}{2}\|x_2^{k+1} -
x_2^{\infty}\|_{H_{22}+\Sigma_2+\beta A_2^\top A_2}^2 + \frac{1}{2\beta}\|\mu^{k+1} - \mu^{\infty}\|^2 +
\frac{1}{2}\|x_2^{k+1} - x_2^k\|^2_{R_2}\nn \\
&  & \le  \frac{7}{8}\|x^{k_i} - x^{\infty}\|_{H +\Sigma + \frac{4}{7}R}^2 +
\frac{1}{2}\|x_2^{k_i} - x_2^{\infty}\|_{H_{22}+\Sigma_2+\beta
A_2^\top A_2}^2 + \frac{1}{2\beta}\|\mu^{k_i} - \mu^{\infty}\|^2 +
\frac{1}{2}\|x_2^{k_i} - x_2^{k_i - 1}\|^2_{R_2}.
\label{theta-bound2}
\end{eqnarray}}
\red{It follows from \eqref{lim-x1-x2-R} that
\[  \lim_{k \rightarrow \infty} \|x_2^{k+1} - x_2^k\|_{ R_2} =0. \nn\]}
Note that
\red{\[ \lim_{i \rightarrow \infty} \big(\frac{7}{8}\|x^{k_i} - x^{\infty}\|_{H + \Sigma +\frac{4}{7}R}^2 + \frac{1}{2}\|x_2^{k_i} -
x_2^{\infty}\|_{H_{22}+\Sigma_2+\beta A_2^\top A_2}^2 +
\frac{1}{2\beta}\|\mu^{k_i} - \mu^{\infty}\|^2+
\frac{1}{2}\|x_2^{k_i} - x_2^{k_i - 1}\|^2_{R_2}\big) = 0, \nn \]}
and so we can deduce from \eqref{theta-bound2} that
\red{\begin{equation*}
\lim_{k \rightarrow \infty} \big( \frac{7}{8}\|x^{k+1} - x^{\infty}\|_{H
+  \Sigma + \frac{4}{7}R}^2 + \frac{1}{2}\|x_2^{k+1} -
x_2^{\infty}\|_{H_{22}+\Sigma_2+\beta A_2^\top A_2}^2 +
\frac{1}{2\beta}\|\mu^{k+1} - \mu^{\infty}\|^2+
\frac{1}{2}\|x_2^{k+1} - x_2^k\|^2_{R_2}\big) = 0,
\end{equation*}}
which implies
\[
\red{\lim_{k \rightarrow \infty}\|x_2^{k+1}
-x_2^{\infty}\|_{H_{22}+\Sigma_2+\beta A_2^\top A_2 + R_2}^2 =0,}\qquad
\lim_{k \rightarrow \infty} \mu^{k+1} =\mu^\infty
\]
and
\red{\begin{equation}\label{eq:H+sigmax}
\lim_{k \rightarrow \infty} \|x^{k+1} - x^{\infty}\|_{H+\Sigma+R}^2=0.
\end{equation}}
Since \red{$H_{22}+\Sigma_2+\beta A_2^\top A_2 + R_2$} is positive definite, we
obtain
\begin{equation}\label{eq:limitx2}
\displaystyle \lim_{k\rightarrow \infty} x_2^{k+1} =x_2^\infty.
\end{equation}
On the other hand, by \eqref{KKT} and \eqref{eq:limitx2},
it can easily be seen that
\begin{eqnarray}\label{eq:limitA1x1}
\|A_1(x_1^{k+1} -  x_1^{\infty})\| & \le & \|A_1x_1^{k+1} +
A_2x_2^{k+1} - (A_1x_1^{\infty} + A_2x_2^{\infty})\| +
   \|A_2(x_2^{k+1} - x_2^{\infty})\| \nonumber
\\[5pt]
  & = & \|A_1x_1^{k+1} + A_2x_2^{k+1} - b\| + \|A_2(x_2^{k+1} -  x_2^{\infty})\| \rightarrow 0,
\end{eqnarray}
as $k\rightarrow \infty$. Then, we obtain
\red{\begin{eqnarray}
& & \|x_1^{k+1} - x_1^{\infty}\|_{H_{11}+\Sigma_1 + \beta A_1^\top
A_1 + R_1}^2  =  \|x_1^{k+1} - x_1^{\infty}\|_{H_{11} + \Sigma_1 + R_1}^2 + \beta
\|A_1(x_1^{k+1} -  x_1^{\infty})\|^2  \nonumber \\ [0.2cm] & & \;\; =  \left\|\left[
\begin{array}{c}
x_1^{k+1} - x_1^{\infty}   \\
0
\end{array}
\right]\right\|_{H+\Sigma+R}^2 +   \beta
\|A_1(x_1^{k+1} -  x_1^{\infty})\|^2 \nonumber\\[0.2cm]
 & & \;\; \le \left( \left\|\left[
 \begin{array}{c}
 x_1^{k+1} - x_1^{\infty}   \\
 x_2^{k+1} - x_2^{\infty}
 \end{array}
 \right]\right\|_{H+\Sigma+R} +  \left\|\left[
 \begin{array}{c}
  0   \\
 x_2^{k+1} - x_2^{\infty}
 \end{array}
 \right]\right\|_{H+\Sigma+R}\right)^2 +   \beta
 \|A_1(x_1^{k+1} -  x_1^{\infty})\|^2 \nonumber\\[0.2cm]
 && \;\;=
(\|x^{k+1} - x^{\infty}\|_{H+\Sigma+R} + \|x_2^{k+1} -
x_2^{\infty}\|_{H_{22}+\Sigma_2+R_2})^2+ \beta \|A_1(x_1^{k+1} -
x_1^{\infty})\|^2,  \nonumber
\end{eqnarray}}
where  ``$\leq$'' follows the triangle inequality of norms. Together with \eqref{eq:H+sigmax}, \eqref{eq:limitx2},
\eqref{eq:limitA1x1}, and the positive definiteness of
\red{$H_{11}+\Sigma_1+\beta A_1^\top A_1 + R_1$}, this shows that
\[
\lim_{k \rightarrow \infty} x_1^{k+1} = x_1^{\infty}.
\]
Therefore, we have shown that the whole sequence $\{(x_1^k, x_2^k,
\mu^k)\}$ converges to $(x_1^{\infty}, x_2^{\infty}, \mu^{\infty})$,
which is a KKT
point of \eqref{min-problem}.
This comletes the proof.\quad
\end{proof}

    \begin{remark} In fact,
    the iterate convergence of $2$-block \red{proximal} ADMM can also be guaranteed
    if there is a fixed stepsize $\gamma\in(0, (1+\sqrt{5})/2)$ in the dual update. Namely,
    the \red{proximal} ADMM can be  extended as follows:
\red{\begin{equation}\label{Iter-ADMM-extend-gamma}
\left\{
\begin{array}{l}
  \displaystyle x_1^{k+1} := \argmin_{x_1 \in \R^{d_1}} \Big\{ {\cal L}_{\beta}(x_1, x_2^k; \mu^k) + \frac{1}{2}\|x_1 - x_1^k\|_{R_1}^2\Big\}, \\[5pt]
  \displaystyle x_2^{k+1} := \argmin_{x_2 \in \R^{d_2}}\Big\{{\cal L}_{\beta}(x_1^{k+1}, x_2; \mu^k)) + \frac{1}{2}\|x_2 - x_2^k\|_{R_2}^2 \Big\}, \\[5pt]
  \mu^{k+1}: = \mu^k - \gamma\beta (A_1x_1^{k+1} + A_2 x_2^{k+1} - b),
\end{array}
\right.
\end{equation}}
where $\beta > 0$ and $\gamma \in (0, (1+\sqrt{5})/2)$. Under the
 conditions of Theorem {\em \ref{Conver-Alg}}, we can similarly
prove the global iterate convergence of \eqref{Iter-ADMM-extend-gamma}. For
brevity, we omit the details here.
\end{remark}

\red{
 \begin{remark} The proximal ADMM includes the ADMM and its linearized version as special cases.
When $R_1=0$ and $R_2=0$, the proximal ADMM reduces to the ADMM and, according to Theorem
\ref{Conver-Alg}, its convergence can be established under the condition that
    \begin{equation*}
    \left[\begin{array}{cc}
    H_{11} & 0  \\
    0 & H_{22}
    \end{array}\right] +  \left[\begin{array}{cc}
    \Sigma_1 & 0  \\
    0        & \Sigma_2
    \end{array}\right] +
    \left[\begin{array}{cc}
    A_1\zz A_1 & 0  \\
    0 & A_2\zz A_2
    \end{array}\right]
    \succ 0
    \end{equation*}
The ADMM can be easily applied to the convex minimization problems where $\theta_i\,(i=1,2)$ have closed form proximal
operators and all the matrices $H_{11}, H_{22}, A_1, A_2$ are diagonal.  Otherwise, we  consider the linearized ADMM:
\begin{eqnarray*}
\left\{
\begin{array}{l}
\displaystyle {x_1^{k+1/2}} : =\big[{(r_1 I - H_{11}-\beta A_1^TA_1})x_1^k-\beta A_1^T(A_2x_2^k-b) -H_{12}x_2^k-g_1\big]/r_1\\[0.2cm]
\displaystyle  x_1^{k+1} := \argmin\limits_{x_1 \in \R^{d_1}} \theta_1(x_1)+ \frac{r_1}{2}\|x_1 - x_1^{k+1/2}\|^2,\\
\displaystyle {x_2^{k+1/2}} : =\big[({r_2 I - H_{22}-\beta A_2^TA_2})x_2^k-\beta A_2^T(A_1x_1^{k+1}-b) -H_{12}^Tx_1^{k+1}-g_2\big]/r_2\\[0.2cm]
\displaystyle  x_2^{k+1} := \argmin\limits_{x_2 \in \R^{d_2}} \theta_2(x_2)+ \frac{r_2}{2}\|x_2 - x_2^{k+1/2}\|^2,\\
\displaystyle \mu^{k+1} := \mu^k -\beta (A_1x_1^{k+1}+A_2x_2^{k+1}-b),
\end{array}
\right.
\end{eqnarray*}
which is equivalent to the proximal ADMM with $R_1 =r_1 I- H_{11}-\beta A_1^TA_1$
and $R_2 =r_2 I- H_{22}-\beta A_2^TA_2$.  Thus the iterate convergence of linearized ADMM can be guaranteed
under the condition that
\[
r_i \geq \max_{1\leq j \leq d_i} \lambda_j (H_{ii}+\beta A_i^TA_i),\quad i=1,2,
\]
where $\lambda_j(\cdot)$ represents the $j$th eigenvalue of a matrix.

\end{remark}
}

\blue{By using the following proposition (see \cite[Lemma 1.1]{Deng2016} and \cite[Lemma 3]{lisuntoh2016SIOPT}),
we can deliver a $o(1/k)$ convergence rate of the proximal ADMM, measured by
the square of KKT violation.
\begin{proposition}\label{prop-lem}
	\magenta{For} any sequence  $\{a_i\} \subseteq \Re$ satisfying
	$a_i \ge 0$ and $\sum_{i=1}^{\infty} a_i < + \infty$, it holds that
	$\min_{1 \le i \le k}\{a_i\}= o(1/k)$.
\end{proposition}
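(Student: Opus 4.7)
The plan is to prove the standard fact $k\cdot\min_{1\le i\le k}a_i\to 0$ by playing off a tail-of-the-series estimate against the obvious ``average $\ge$ minimum'' inequality. A first naive attempt is to write $\sum_{i=1}^{k}a_i \ge k\,\min_{1\le i\le k}a_i$, which only yields $\min_{1\le i\le k}a_i \le S/k$ with $S=\sum_{i=1}^{\infty}a_i$, i.e.\ an $O(1/k)$ rate. The reason this is not sharp is that it compares the minimum to the full partial sum, which stays bounded away from $0$; to get a little-$o$ rate one must instead compare it to a \emph{tail} sum, which vanishes.

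Concretely, I would work on the block of indices from $\lceil k/2\rceil$ to $k$. This block contains at least $\lfloor k/2\rfloor$ indices, and the minimum over this block is at least the minimum over $\{1,\dots,k\}$ turned around (it is an upper bound on $\min_{1\le i\le k}a_i$). Bounding the block-sum from below by (number of terms)$\times$(block-minimum) gives
\begin{equation*}
\frac{k}{2}\,\min_{1\le i\le k}a_i \;\le\; \sum_{i=\lceil k/2\rceil}^{k} a_i \;\le\; \sum_{i=\lceil k/2\rceil}^{\infty} a_i,
\end{equation*}
and hence $k\,\min_{1\le i\le k}a_i \le 2\sum_{i=\lceil k/2\rceil}^{\infty}a_i$. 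The summability hypothesis $\sum_{i=1}^{\infty}a_i<\infty$ implies by the Cauchy criterion that the right-hand side tends to $0$ as $k\to\infty$, which is exactly the $o(1/k)$ claim.

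There is essentially no obstacle in the argument beyond the one conceptual step of moving from the partial sum to the tail sum; the non-negativity of $a_i$ is used to discard the initial terms $a_1,\ldots,a_{\lceil k/2\rceil-1}$ from the bound without loss. If desired, one can replace the cut at $\lceil k/2\rceil$ by any cut of the form $\lceil \alpha k\rceil$ with $\alpha\in(0,1)$; this only changes the constant $2$ in the bound to $1/(1-\alpha)$ and does not affect the $o(1/k)$ conclusion. I would present the argument in just a few lines: introduce the tail, apply the minimum-versus-average inequality on the second half, and invoke summability.
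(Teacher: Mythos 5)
Your argument is correct: bounding $\min_{1\le i\le k}a_i$ by the average of $a_i$ over the second half of the indices gives $k\,\min_{1\le i\le k}a_i\le 2\sum_{i=\lceil k/2\rceil}^{\infty}a_i$, and the tail of a convergent series of nonnegative terms tends to zero, which is exactly the $o(1/k)$ claim. The paper itself offers no proof of this proposition (it cites Lemma 1.1 of Deng et al.\ and Lemma 3 of Li--Sun--Toh), and your tail-sum argument is precisely the standard one used there, so there is nothing to add.
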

}

\red{\begin{theorem}\label{Conver-Alg-1}
	Suppose Assumption {\em\ref{asmp:1}} holds. Let $\{(x_1^k, x_2^k,
	\mu^k)\}$ be generated by the \red{proximal} ADMM \eqref{eq:admm} with $n=2$ to solve problem
	\eqref{min-problem}. Then, we have
	\begin{eqnarray}\label{iter-complex}
& &  \min_{1 \le i \le k}\Big\{d^2\big(0, \; \partial \theta_1(x_1^{i+1}) + \nabla_{x_1}\phi(x_1^{i+1}, x_2^{i+1}) - A_1^\top \mu^{i+1}\big) + d^2\big(0, \; \partial \theta_2(x_2^{i+1}) + \nabla_{x_2}\phi(x_1^{i+1}, x_2^{i+1}) - A_2^\top \mu^{i+1}\big)  \nonumber\\
&  & \qquad\quad + \|A_1 x_1^{i+1} + A_2 x_2^{i+1} - b\|^2\Big\} = o(1/k).
	\end{eqnarray}
\end{theorem}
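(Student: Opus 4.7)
The plan is to combine the summability already extracted in Lemma~\ref{lemma:contra} with Proposition~\ref{prop-lem} and control the three KKT residuals at iterate $i+1$ by the error quantities $\|x^{i+1}-x^i\|^2_{H+\Sigma+8R}$, $\|x_2^{i+1}-x_2^i\|^2_{H_{22}+\Sigma_2+3\beta A_2\zz A_2}$, and $\|\mu^{i+1}-\mu^i\|^2$ that appear on the right-hand side of the contraction inequality \eqref{contraction-ine}.

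First I would recall the summability relation \eqref{sum} from the proof of Lemma~\ref{lemma:contra}(ii): telescoping \eqref{contraction-ine} shows that $\sum_{i\geq 1} e_i < \infty$, where $e_i$ denotes the non-negative quantity consisting of the three residual-in-successive-iterates terms on the right-hand side of \eqref{contraction-ine}. Proposition~\ref{prop-lem} then immediately yields $\min_{1\leq i\leq k} e_i = o(1/k)$.

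Next I would translate this into a KKT residual bound at iterate $i{+}1$. From the optimality condition \eqref{opt-cond}, the distance from $0$ to $\partial\theta_1(x_1^{i+1})+\nabla_{x_1}\phi(x_1^{i+1},x_2^{i+1})-A_1\zz\mu^{i+1}$ is at most $\|(H_{12}+\beta A_1\zz A_2)(x_2^{i+1}-x_2^i) - R_1(x_1^{i+1}-x_1^i)\|$, and the distance from $0$ to $\partial\theta_2(x_2^{i+1})+\nabla_{x_2}\phi(x_1^{i+1},x_2^{i+1})-A_2\zz\mu^{i+1}$ is at most $\|R_2(x_2^{i+1}-x_2^i)\|$; the primal residual equals $\beta^{-1}\|\mu^{i+1}-\mu^k\|$ by the dual update. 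Squaring and applying $(a+b+c)^2\leq 3(a^2+b^2+c^2)$, I only need to dominate the four pieces $\|H_{12}(x_2^{i+1}-x_2^i)\|^2$, $\|A_1\zz A_2(x_2^{i+1}-x_2^i)\|^2$, $\|R_1(x_1^{i+1}-x_1^i)\|^2$, and $\|R_2(x_2^{i+1}-x_2^i)\|^2$ by a constant multiple of $e_i$.

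The one genuinely non-routine bound is $\|H_{12}v\|^2 \leq \lambda_{\max}(H_{11})\,\|v\|^2_{H_{22}}$ for all $v$, and this is where I expect to spend a little care: positive semi-definiteness of the $2\times 2$ block matrix $H$ gives $u\zz H_{11}u + 2u\zz H_{12}v + v\zz H_{22}v \geq 0$ for all $u,v$, and choosing $u=tH_{12}v$ and demanding the resulting quadratic in $t$ to be non-negative forces the discriminant inequality $\|H_{12}v\|^4 \leq (H_{12}v)\zz H_{11}(H_{12}v)\cdot v\zz H_{22}v \leq \lambda_{\max}(H_{11})\|H_{12}v\|^2\,\|v\|^2_{H_{22}}$, which gives the claim. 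The $A_1\zz A_2$ piece is trivial by $\|A_1\zz A_2 v\|\leq \|A_1\|\cdot\|A_2 v\|$, and the two $R_i$ pieces are bounded by $\|R_i\|\cdot\|x_i^{i+1}-x_i^i\|^2_{R_i}$ since $R_i\succeq 0$. Putting all these estimates together produces a constant $C>0$ (depending only on $H$, $A_1,A_2$, $R_1,R_2$, and $\beta$) such that the sum of the three squared KKT residuals at iterate $i+1$ is bounded above by $C e_i$, and hence the desired $o(1/k)$ estimate \eqref{iter-complex} follows at once from the $o(1/k)$ conclusion of Proposition~\ref{prop-lem} applied in step~1.
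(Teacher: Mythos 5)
Your proposal is correct and follows essentially the same route as the paper: both express the KKT residuals at iterate $i+1$ via the shifted optimality conditions \eqref{opt-cond} as $-R_1(x_1^{i+1}-x_1^i)+(H_{12}+\beta A_1^\top A_2)(x_2^{i+1}-x_2^i)$, $-R_2(x_2^{i+1}-x_2^i)$ and $\beta^{-1}\|\mu^{i+1}-\mu^i\|$, both invoke the summability \eqref{sum} coming from telescoping \eqref{contraction-ine}, and both finish with Proposition~\ref{prop-lem}. The one genuine point of divergence is how the term $\|(H_{12}+\beta A_1^\top A_2)(x_2^{i+1}-x_2^i)\|^2$ is dominated: the paper bounds it by $\|H_{12}+\beta A_1^\top A_2\|^2\,\|x_2^{i+1}-x_2^i\|^2$ and then needs the positive definiteness of $H_{22}+\Sigma_2+A_2^\top A_2+R_2$ (i.e.\ Assumption~\ref{asmp:1}) to convert the weighted summability in \eqref{sum} into plain Euclidean summability of $\|x_2^{i+1}-x_2^i\|^2$; you instead use the Schur-type inequality $\|H_{12}v\|^2\le\lambda_{\max}(H_{11})\|v\|^2_{H_{22}}$ (your discriminant argument is valid, including the degenerate case where the leading coefficient vanishes) together with $\|A_1^\top A_2 v\|\le\|A_1\|\,\|A_2v\|$, so that every piece is controlled by weighted norms already present in the right-hand side of \eqref{contraction-ine}. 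Your route is marginally more self-contained in that the bound of the squared KKT residual by $Ce_i$ does not itself invoke the positive definiteness from Assumption~\ref{asmp:1}; either way the conclusion \eqref{iter-complex} follows. (Only a typo to fix: the primal residual should read $\beta^{-1}\|\mu^{i+1}-\mu^i\|$, not $\beta^{-1}\|\mu^{i+1}-\mu^k\|$.)
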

\begin{proof} From \eqref{opt-cond} and \eqref{eq:admm}, we obtain
\begin{equation*}
\left\{
\begin{array}{l}
\displaystyle -R_1(x_1^{k+1} - x_1^k) + (H_{12} + \beta A_1^\top A_2)(x_2^{k+1} - x_2^k) \in  \partial \theta_1(x_1^{k+1}) + \nabla_{x_1}\phi(x_1^{k+1}, x_2^{k+1}) - A_1^\top \mu^{k+1}, \\[5pt]
\displaystyle  -R_2(x_2^{k+1} - x_2^k)  \in  \partial \theta_2(x_2^{k+1}) + \nabla_{x_2}\phi(x_1^{k+1}, x_2^{k+1}) - A_2^\top \mu^{k+1}, \\[5pt]
\displaystyle  \|A_1 x_1^{k+1} + A_2 x_2^{k+1} - b\|^2 = \frac{1}{\beta^2}\|\mu^{k+1} - \mu^k\|^2.
\end{array}
\right.
\end{equation*}
By using the Cauchy-Schwarz inequality and the above formulas, we obtain
 \begin{eqnarray}\label{sum-1}
&  & d^2\big(0, \; \partial \theta_1(x_1^{k+1}) + \nabla_{x_1}\phi(x_1^{k+1}, x_2^{k+1}) - A_1^\top \mu^{k+1}\big) + d^2\big(0, \; \partial \theta_2(x_2^{k+1}) + \nabla_{x_2}\phi(x_1^{k+1}, x_2^{k+1}) - A_2^\top \mu^{k+1}\big)  \nonumber\\
&  & \qquad\quad + \|A_1 x_1^{k+1} + A_2 x_2^{k+1} - b\|^2 \nonumber \\
&  & \quad \le  2\|R_1(x_1^{k+1} - x_1^k)\|^2 + 2
\|(H_{12} + \beta A_1^\top A_2)(x_2^{k+1} - x_2^k)\|^2
+ \|R_2(x_2^{k+1} - x_2^k)\|^2 + \frac{1}{\beta^2}\|\mu^{k+1} - \mu^k\|^2 \nonumber\\
&  & \quad \le 2\|R_1^{\frac{1}{2}}\|^2\|x_1^{k+1} - x_1^k\|^2_{R_1} +(2\|H_{12}+\beta A_1^\top A_2\|^2 + \|R_2\|^2)\|x_2^{k+1} - x_2^k\|^2 + \frac{1}{\beta^2}\|\mu^{k+1} - \mu^k\|^2.
 \end{eqnarray}
It follows from \eqref{sum} that
\begin{equation}\label{Iter-sum}
\left\{
\begin{array}{l}
\displaystyle\sum_{k=1}^{\infty}\|x_1^{k+1} - x_1^k\|^2_{R_1} < \infty, \\[5pt]
\displaystyle \sum_{k=1}^{\infty}\|x_2^{k+1} - x_2^k\|^2_{H_{22}+\Sigma_2+  A_2^\top A_2+R_2} < \infty, \\[5pt]
\displaystyle\sum_{k=1}^{\infty}\|\mu^{k+1} - \mu^k\|^2< \infty,
\end{array}
\right.
\end{equation}
Since $H_{22}+\Sigma_2+  A_2^\top A_2+R_2 \succ 0$, from \eqref{Iter-sum} we
have
\begin{eqnarray}\label{eq:newadd}
\sum_{k=1}^{\infty}\|x_2^{k+1} - x_2^k\|^2 < \infty.
\end{eqnarray}
\blue{Combining Proposition \ref{prop-lem}
	 with the relationships \eqref{Iter-sum} and \eqref{eq:newadd}, we have
\[  \min_{1 \le i \le k}\Big\{2\|R_1^{\frac{1}{2}}\|^2\|x_1^{i+1} - x_1^i\|^2_{R_1} +(2\|H_{12}+\beta A_1^\top A_2\|^2 + \|R_2\|^2)\|x_2^{i+1} - x_2^i\|^2 + \frac{1}{\beta^2}\|\mu^{i+1} - \mu^i\|^2\Big\} = o(1/k),\]
which, together with \eqref{sum-1}, implies \eqref{iter-complex}. We complete the proof.}
\end{proof}
}

We remark that, in some sense,  Assumption \ref{asmp:1} actually acts as the
weakest condition to guarantee the iterate convergence of the \red{proximal} ADMM for solving problem
\eqref{min-problem}. Firstly, if Assumption \ref{asmp:1} is violated, the solution sets of
subproblems in
 \eqref{eq:admm} might be empty, in which case the $2$-block \red{proximal} ADMM scheme is not well defined
 (see \cite{ChenSunToh2015-2} for an illustration).
  Secondly, the following corollary shows that Assumption  \ref{asmp:1}
is not only sufficient, but also necessary for the iterate convergence of the  $2$-block \red{proximal}
 ADMM for solving the coupled quadratic minimization problem.
Thus, the conditions we proposed are
 already tight.

\begin{corollary}\label{Conver-Alg2-Cor}
Assume problem \eqref{min-problem} is a convex
quadratic programming problem, that is $\theta_1(x_1) \equiv 0$ and
$\theta_2(x_2) \equiv 0$. Then, any sequence generated by the
$2$-block \red{proximal} ADMM is convergent if and only if  Assumption
{\em\ref{asmp:1}} holds.
%
\end{corollary}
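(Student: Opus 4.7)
The plan is to split the biconditional into sufficiency (which follows at once from Theorem \ref{Conver-Alg}) and necessity (which requires constructing a non-convergent iterate sequence whenever Assumption \ref{asmp:1} is violated).

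For sufficiency, I would first observe that with $\theta_1 \equiv 0$ and $\theta_2 \equiv 0$ the subdifferentials reduce to $\{0\}$, and the only matrices $\Sigma_1, \Sigma_2$ satisfying \eqref{def-sig-p}–\eqref{def-sig-q} are $\Sigma_1 = 0$ and $\Sigma_2 = 0$. Hence Assumption \ref{asmp:1} collapses to the positive definiteness of $\mathrm{diag}(H_{11} + A_1\zz A_1 + R_1,\; H_{22} + A_2\zz A_2 + R_2)$, and Theorem \ref{Conver-Alg} immediately yields convergence of the sequence to a KKT point of \eqref{min-problem}.

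For necessity, suppose Assumption \ref{asmp:1} fails. Then, without loss of generality, there exists $v_1 \neq 0$ such that $H_{11} v_1 = 0$, $A_1 v_1 = 0$, and $R_1 v_1 = 0$. Because $H \succeq 0$, the identity $(v_1,0)\zz H (v_1,0)\zz = v_1\zz H_{11} v_1 = 0$ forces $H_{12}\zz v_1 = 0$, so the extended vector $v := (v_1,0)$ annihilates $H$, $R$, and $A_1$ simultaneously. I would then analyze the first subproblem of \eqref{eq:admm}, whose Hessian $H_{11} + \beta A_1\zz A_1 + R_1$ has $v_1$ in its kernel. Two sub-cases arise, according to whether the associated linear term evaluated against $v_1$ (which reduces to $v_1\zz g_1$) vanishes: if $v_1\zz g_1 \neq 0$, the subproblem is unbounded below and no iterate is produced, so the ADMM scheme is not even well defined; if $v_1\zz g_1 = 0$, the subproblem admits a continuum $\{x_1^{k+1} + \alpha v_1 : \alpha \in \R\}$ of minimizers, opening the door to an adversarial tie-breaking rule.

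In the second sub-case I would build an explicit non-convergent trajectory by initializing at a KKT point $(\bar x_1, \bar x_2, \bar \mu)$ and choosing $x_1^{k+1} := \bar x_1 + \alpha_k v_1$ with $\alpha_k \to \infty$. The invariances $A_1 v_1 = 0$, $H_{12}\zz v_1 = 0$, $R_1 v_1 = 0$ imply that the $x_2$-subproblem, the multiplier update, and all later $x_1$-subproblems are independent of the choice of $\alpha_k$, so the recursion stays at $x_2^{k+1} = \bar x_2$, $\mu^{k+1} = \bar \mu$ while $\{x_1^k\}$ diverges. This produces a legitimate $2$-block proximal ADMM sequence that is not convergent, establishing the necessity of Assumption \ref{asmp:1}. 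The main obstacle, and the only step that requires careful bookkeeping, is verifying that the shifts by $\alpha_k v_1$ genuinely leave all downstream subproblems invariant; once the kernel inclusions above are collected from $v_1 \in \ker(H_{11} + A_1\zz A_1 + R_1)$ together with $H \succeq 0$, this verification is a direct substitution into \eqref{eq:admm}.
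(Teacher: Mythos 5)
Your proof is correct and follows essentially the same route as the paper: both rest on extracting a nonzero kernel vector of the relevant diagonal block (your $v_1$ with $H_{11}v_1=A_1v_1=R_1v_1=0$), using $H\succeq 0$ to force $H_{12}^\top v_1=0$, and exploiting the resulting invariance of all downstream subproblems under shifts along that direction to exhibit a legitimate divergent trajectory. The only cosmetic differences are that the paper perturbs an arbitrary convergent ADMM sequence by $(\bar y_1,\bar y_2)$ on alternate iterations whereas you drift from a KKT point via $\alpha_k v_1$ with $\alpha_k\to\infty$, and that your first sub-case ($v_1^\top g_1\neq 0$, scheme ill-defined) is in fact vacuous under the paper's standing assumption that the KKT set is nonempty, since the KKT stationarity condition already forces $v_1^\top g_1=0$.
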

\begin{proof} The ``if" part follows immediately from Theorem \ref{Conver-Alg}.
 For the ``only if" part, we prove that if Assumption \ref{asmp:1}
 fails to hold, there must exist some sequence generated
by the 2-block \red{proximal} ADMM that is divergent.  Indeed, let
$\{(x_1^k,x_2^k,\mu^k)\}$ be a sequence generated by the $2$-block \red{proximal} ADMM, i.e.,
\red{\begin{equation}\label{Iter-ADMM-part1}
\left\{
\begin{array}{l}
  \displaystyle x_1^{k+1}  \in \argmin_{x_1 \in \R^{d_1}} \Big\{{\cal L}_{\beta}(x_1, x_2^k; \mu^k) + \frac{1}{2}\|x_1 - x_1^k\|^2_{R_1}\Big\}, \\[5pt]
  \displaystyle x_2^{k+1}  \in \argmin_{x_2 \in \R^{d_2}} \Big\{{\cal L}_{\beta}(x_1^{k+1}, x_2; \mu^k) + \frac{1}{2}\|x_2 - x_2^k\|^2_{R_2}\Big\}, \\[5pt]
  \mu^{k+1}=\mu^k -\beta (A_1x_1^{k+1} + A_2x_2^{k+1}-b).
\end{array}
\right.
\end{equation}}
If the sequence is divergent, then the ``only if'' part of this
corollary holds. Thus we  need only consider the case where
$\{(x_1^k,x_2^k,\mu^k)\}$ converges. Because  \red{$H_{ii}+\beta A_i^\top
A_i+R_i$}\, $(i=1,2)$ are not positive definite, there exists a nonzero
vector $(\bar y_1, \bar y_2)$ such that
\red{\[
(H_{ii}+\beta A_i^\top A_i + R_i) \bar y_i =0 \quad \forall\, i=1,2,
\]}
or equivalently,
\red{\begin{equation}\label{eq:Hequation1}
H_{ii} \bar y_i=0, \qquad A_i
\bar{y}_i=0 \qquad {\rm and}\qquad  R_i
\bar{y}_i=0 \,\,\quad \forall\, i=1,2.
\end{equation}}
Using the fact that
$
0
\preceq  H  \preceq 2
\left[\begin{array}{cc}
    H_{11} & 0  \\
    0 & H_{22}
    \end{array}\right]
$,
we have $H\bar y=0$. Hence, it holds that
\begin{equation}\label{eq:Hequation2}
H_{12} \bar y_2=0 \qquad \quad{\rm and} \qquad \quad H_{12}^\top
\bar y_1=0.
\end{equation}
By \eqref{Iter-ADMM-part1}, \eqref{eq:Hequation1}
and \eqref{eq:Hequation2}, it  can easily be seen that, for any $k\geq 1$,
\red{
\begin{equation*}
\left\{
\begin{array}{l}
  \displaystyle  x_1^{2k} +\bar y_1 \in \argmin_{x_1} \Big\{{\cal L}_{\beta}(x_1, x_2^{2k-1}; \mu^{2k-1}) + \frac{1}{2}\|x_1 - x_1^{2k-1}\|^2_{R_1}\Big\}, \\[5pt]
  \displaystyle x_2^{2k} + \bar y_2 \in \argmin_{x_2}  \Big\{{\cal L}_{\beta}(x_1^{2k}+\bar y_1, x_2; \mu^{2k-1}) + \frac{1}{2}\|x_2 - x_2^{2k-1}\|^2_{R_2}\Big\}, \\[5pt]
  \mu^{2k}=\mu^{2k-1} -\beta \big(A_1( x_1^{2k} +\bar y_1) + A_2 (x_2^{2k}+\bar y_2) -b \big)
\end{array}
\right.
\end{equation*}}
and
\red{\begin{equation*}
\left\{
\begin{array}{l}
  \displaystyle  x_1^{2k+1} \in \argmin_{x_1} \Big\{ {\cal L}_{\beta}(x_1, x_2^{2k}+\bar y_2; \mu^{2k}) + \frac{1}{2}\|x_1 - (x_1^{2k}+\bar y_1)\|^2_{R_1}\Big\}, \\[5pt]
  \displaystyle x_2^{2k+1}  \in \argmin_{x_2}  \Big\{{\cal L}_{\beta}(x_1^{2k+1}, x_2; \mu^{2k})  + \frac{1}{2}\|x_2 - (x_2^{2k}+\bar y_2)\|^2_{R_2}\Big\}, \\[5pt]
  \mu^{2k+1}=\mu^{2k} -\beta \big(A_1 x_1^{2k+1}  + A_2 x_2^{2k+1} -b
  \big).
\end{array}
\right.
\end{equation*}}
This means that the divergent sequence $( x_1^1, x_2^1,
\mu^1)\rightarrow(x_1^2 + \bar y_1, x_2^2 + \bar y_2, \mu^2)
\rightarrow ( x_1^3, x_2^3,
\mu^3) \rightarrow(x_1^4 + \bar y_1, x_2^4 + \bar y_2, \mu^4)
\rightarrow \ldots$ could
be generated by the $2$-block \red{proximal} ADMM. Thus,  Assumption \ref{asmp:1} is also necessary for the
iterate convergence. This completes the proof. \quad
\end{proof}

When restricted to the case that $A_i\, (i=1,2)$ and $b$
are absent, the $2$-block \red{proximal} ADMM reduces to the $2$-block cyclic proximal BCD method. Our analysis of proximal ADMM provides an iterate convergence result for the $2$-block cyclic proximal BCD method
without assuming 
the boundedness of the iterates,
but only requiring a condition to ensure the uniqueness of the subproblem solutions.
This result is an important supplement to traditional studies on BCD,
which have mainly focused on subsequence convergence and the complexity of the function values, and enables a better understanding of the performance of this method.

 \begin{corollary}\label{Conver-Alg2}
Assume \red{$H_{ii}+\Sigma_i + R_i
\succ 0$ }. Let $\{(x_1^k, x_2^k)\}$ be generated by the cyclic proximal
BCD \eqref{eq:BCD} with $n=2$ to solve the following unconstrained optimization problem:
\begin{equation}\label{prob:un}
\displaystyle \min_{x\in \R^d} \,\,\theta_1(x_1) + \theta_2(x_2) +  {1\over 2}x^\top Hx + g^\top x.\\[5pt]
\end{equation}
Then the whole sequence $\{(x_1^k, x_2^k)\}$  converges to an optimal solution of
\eqref{prob:un}.
\end{corollary}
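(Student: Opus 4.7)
The plan is to deduce this corollary by specializing Theorem \ref{Conver-Alg} to a trivially constrained case. Concretely, I would reinterpret the unconstrained problem \eqref{prob:un} as the linearly constrained problem \eqref{min-problem} with $A_1 := 0$, $A_2 := 0$, and $b := 0$ (of any compatible dimensions, say $m = 1$). Under this choice, the constraint $A_1 x_1 + A_2 x_2 = b$ degenerates to $0 = 0$ and imposes no restriction, so the set of KKT points coincides with the set of optimal solutions of \eqref{prob:un}, and its nonemptiness is just the usual solvability assumption.

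The key verification I would carry out consists of three matching checks, all of which are direct. First, when $A_1 = 0$, $A_2 = 0$, and $b = 0$, the augmented Lagrangian \eqref{eq:lag} collapses to $\theta_1(x_1) + \theta_2(x_2) + \phi(x_1,x_2)$, independent of $\mu$. Consequently, the $x_1$- and $x_2$-subproblems of the proximal ADMM \eqref{eq:admm} coincide exactly with the proximal BCD subproblems in \eqref{eq:BCD}, while the multiplier update reduces to $\mu^{k+1} = \mu^k$, so the dual sequence is constant (one may initialize $\mu^0 = 0$). Second, Assumption \ref{asmp:1} becomes the block-diagonal positive definiteness $\mathrm{diag}(H_{11} + \Sigma_1 + R_1,\; H_{22} + \Sigma_2 + R_2) \succ 0$, which is exactly the hypothesis $H_{ii} + \Sigma_i + R_i \succ 0$ for $i=1,2$. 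Third, the KKT system \eqref{kkt} simplifies to $-\nabla_{x_i}\phi(\bar x_1, \bar x_2) \in \partial \theta_i(\bar x_i)$ for $i=1,2$, i.e., the first-order optimality condition of the convex problem \eqref{prob:un}.

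With these three reductions in place, I would invoke Theorem \ref{Conver-Alg} to conclude that the whole sequence $\{(x_1^k, x_2^k, \mu^k)\}$ generated by the specialized proximal ADMM converges to a KKT point; discarding the trivial $\mu$-component leaves the desired convergence of $\{(x_1^k, x_2^k)\}$ to an optimal solution of \eqref{prob:un}. I do not anticipate any substantial obstacle: the only thing to be careful about is verifying that setting $A_i = 0$ uniformly removes every $A_i$-dependent term from both the algorithm and Assumption \ref{asmp:1}, so that no hidden strengthening of the hypotheses is smuggled in. This is immediate from inspection of \eqref{eq:admm}, \eqref{eq:lag}, and \eqref{ADMM2}.
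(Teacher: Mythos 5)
Your proposal is correct and is essentially the reduction the paper itself intends: the corollary is stated immediately after the observation that with $A_i$ and $b$ absent the proximal ADMM collapses to proximal BCD, and your three checks (augmented Lagrangian reduces to the objective so the subproblems coincide and $\mu^k$ is constant; Assumption \ref{asmp:1} reduces to $H_{ii}+\Sigma_i+R_i\succ 0$; the KKT system reduces to the optimality condition of \eqref{prob:un}) are exactly what is needed to invoke Theorem \ref{Conver-Alg}.
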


\red{
 \begin{remark} Similar to the proximal ADMM, the proximal BCD includes BCD and its linearized version (also know as BCPG) as special cases.
When $R_1=0$ and $R_2=0$, the proximal BCD reduces to BCD and, according to Theorem
\ref{Conver-Alg}, its convergence can be established under the condition that
    \begin{equation*}
    \left[\begin{array}{cc}
    H_{11} & 0  \\
    0 & H_{22}
    \end{array}\right] +  \left[\begin{array}{cc}
    \Sigma_1 & 0  \\
    0        & \Sigma_2
    \end{array}\right]
    \succ 0
    \end{equation*}
  The BCPG is a combination of the proximal gradient method and BCD, which can be easily implemented when
  $\theta_i$ have closed-form proximal operators. Specifically, it takes the form that
\begin{eqnarray*}
\left\{
\begin{array}{l}
\displaystyle {x_1^{k+1/2}} : =\big[(r_1 I - H_{11})x_1^k -H_{12}x_2^k-g_1\big]/r_1\\[0.2cm]
\displaystyle  x_1^{k+1} := \argmin\limits_{x_1 \in \R^{d_1}} \theta_1(x_1)+ \frac{r_1}{2}\|x_1 - x_1^{k+1/2}\|^2,\\
\displaystyle {x_2^{k+1/2}} : =\big[(r_2 I - H_{22})x_2^k-H_{12}^Tx_1^{k+1}-g_2\big]/r_2\\[0.2cm]
\displaystyle  x_2^{k+1} := \argmin\limits_{x_2 \in \R^{d_2}} \theta_2(x_2)+ \frac{r_2}{2}\|x_2 - x_2^{k+1/2}\|^2,\\
\end{array}
\right.
\end{eqnarray*}
which is equivalent to the proximal BCD with $R_1 =r_1 I- H_{11}$
and $R_2 =r_2 I- H_{22}$.  Thus the iterate convergence of linearized ADMM can be guranteed
under the condition that
\[
r_i \geq \max_{1\leq j \leq d_i} \lambda_j (H_{ii}),\quad i=1,2.
\]
\end{remark}
}

%

\section{Convergence of Multi-block RPADMM and RPBCD}

As shown in \cite{CHYY2013}, the convergence result for $2$-block
ADMM obtained in the previous section cannot be extended to the multi-block case, i.e., $n\geq 3$. To remove the possibility of divergence, we use randomly permuted ADMM (RPADMM) to solve the nonseparable optimization problem
\eqref{eq:convobj}. Specifically, RPADMM first picks a permutation $\sigma$ of $\{1, \dots, n \}$  uniformly at random,
and then iterates as follows:
\begin{eqnarray}\label{eq:rpadmm}
\left\{
\begin{array}{l}
\displaystyle x_{\sigma(1)}^{k+1} := \argmin\limits_{x_{\sigma(1)}} \mathcal{L}_\beta (x_{\sigma(1)}, x_{\sigma(2)}^k, \ldots, x_{\sigma(n)}^k;\mu^k),\\
\displaystyle x_{\sigma(2)}^{k+1} := \argmin\limits_{x_{\sigma(2)}} \mathcal{L}_\beta (x_{\sigma(1)}^{k+1}, x_{\sigma(2)}, x_{\sigma(3)}^k, \ldots, x_{\sigma(n)}^k;\mu^k),\\
\displaystyle   \cdots\cdots  \\
\displaystyle x_{\sigma(n)}^{k+1} := \argmin\limits_{x_{\sigma(n)}} \mathcal{L}_\beta (x_{\sigma(1)}^{k+1}, x_{\sigma(2)}^{k+1},\ldots, x_{\sigma(n-1)}^{k+1}, x_{\sigma(n)};\mu^k),\\
\displaystyle \mu^{k+1} := \mu^k -\beta (\sum_{i=1}^n
A_ix_i^{k+1}-b),
\end{array}
\right.
\end{eqnarray}
where the permuted augmented Lagrangian function $\mathcal{L}_\beta (x_{\sigma(1)}, x_{\sigma(2)}, \ldots, x_{\sigma(n)};\mu)$ is defined by
\[
\mathcal{L}_\beta (x_{\sigma(1)}, x_{\sigma(2)}, \ldots, x_{\sigma(n)};\mu):=
\mathcal{L}_\beta (x_1, x_2, \ldots, x_n;\mu).
\]

It has been  shown  \cite{SLY2015} that  RPADMM is convergent in
expectation for solving the nonsingular square system of linear
equations. To extend their result to the nonseparable convex
optimization model \eqref{eq:convobj}, it is natural to first study
whether  RPADMM is even convergent in expectation for solving the following simpler linearly constrained quadratic minimization problem
\begin{eqnarray}\label{eq:quadratic}
\begin{array}{ll}
\min\limits_{x \in \R^d} &  \displaystyle \theta(x): =  \frac{1}{2}x^\top H x + g^\top x\\ [0.2cm]
\mbox{s.t.} &  {\displaystyle  \sum_{i=1}^n A_ix_i  =b,}
\end{array}
\end{eqnarray}
where $H$ can  be partitioned into $n\times n$ blocks
\red{$H_{ij}\in\R^{d_i\times d_j}$ ($1\leq i,j\leq n$)} accordingly.
In this section, we provide  an affirmative answer to the above question under the
following assumption.
\begin{assumption}\label{asmp:4}
   Assume 
    $$\left[\begin{array}{cccc}
    H_{11} & 0 & \cdots & 0 \\
    0 & H_{22}  & \cdots & 0 \\
    \vdots & \vdots & \ddots & \vdots\\
    0 & 0 & \cdots & H_{nn}\\
    \end{array}\right] +
    \left[\begin{array}{cccc}
    A_1\zz A_1 & 0 & \cdots & 0 \\
    0 & A_2\zz A_2 & \cdots & 0 \\
    \vdots & \vdots & \ddots & \vdots\\
    0 & 0 & \cdots & A_n\zz A_n\\
    \end{array}\right]
    \succ 0.$$
\end{assumption}
Although our current result is restricted
for nonseparable quadratic minimization, a special case of \eqref{eq:convobj}, it serves as a good indicator of the expected convergence of RPADMM in more general cases. It is noteworthy that our result is a non-trivial extension of the result in \cite{SLY2015}, because, in our setting, the problem under consideration is more general. For \red{example, the} optimal solution set of  \eqref{eq:quadratic} is not necessarily a singleton, in which case the spectral radius of the algorithm mapping may not be strictly less than 1, although this fact played a key role in establishing their result.

\subsection{Proof Outline and Preliminaries}

For convenience, we follow the notation in \cite{SLY2015}, and describe
the iterative scheme of RPADMM in a matrix form. Let $L_{\sigma}\in
\R^{d\times d}$ be an $n\times n$ block matrix  defined by\blue{
\begin{equation*}\label{eq:definels}
(L_{\sigma})_{\sigma(i),\sigma(j)}:=\left\{
\begin{array}{ll}
H_{\sigma(i)\sigma(j)} + \beta A_{\sigma(i)}^\top A_{\sigma(j)}, & \hbox{if} \;\; i\geq j,\\
0, & \hbox{otherwise},
\end{array}
\right.
\end{equation*}}
and $R_\sigma$ be defined as
\begin{equation}\label{eq:defineR}
R_\sigma:=L_\sigma-(H+\beta A\zz A):=L_\sigma- S.
\end{equation}
By setting $z:=(x, \mu)$, the randomly permuted ADMM can be viewed as a fix point iteration
\begin{eqnarray}\label{eq:RPADMM}
z^{k+1} := M_\sigma z^k + \bar{L}_{\sigma}\inv\bar{b},
\end{eqnarray}
where
\begin{eqnarray*}
    M_\sigma:=\bar{L}_{\sigma}\inv\bar{R}_{\sigma} ,\quad
    \bar{L}_\sigma
    :=
    \left[
    \begin{array}{cc}
        L_\sigma & 0 \\
        \beta A & I
    \end{array}
    \right],\quad
    \bar{R}_\sigma
    :=
    \left[
    \begin{array}{cc}
        R_\sigma & A\zz \\
        0 & I
    \end{array}
    \right],\quad
    \bar{b}
    :=
    \left[
    \begin{array}{c}
        -g + \beta A\zz b \\
        \beta b
    \end{array}
    \right].
\end{eqnarray*}
Define the matrix $Q$ by
    \begin{eqnarray}\label{eq:Qdef}
    Q:= E_\sigma (L_\sigma\inv) = \frac{1}{n!}\sum\limits_{\sigma\in\Gamma} L_\sigma\inv
    \end{eqnarray}
and $M$ by
\begin{equation}\label {eq:defineM}
M:= E_\sigma (M_\sigma) ={1\over n!} \sum_{\sigma\in \Gamma} M_\sigma,
\end{equation}
\medskip
where $\Gamma$ is the set of all permutations of $\{1,2,\ldots,n\}$. By direct computation, we can easily see that
    \begin{eqnarray}\label{eq:Mdef}
    M:=\left[
    \begin{array}{cc}
    I-QS & QA\zz\\
    -\beta A+\beta AQS & I- \beta AQA\zz
    \end{array}
    \right].
    \end{eqnarray}

To prove the expected convergence of the RPADMM \eqref{eq:rpadmm} for problem \eqref{eq:quadratic} under  Assumption \ref{asmp:4}, we will use a similar, but not identical, \blue{structure as that introduced in  \cite{SLY2015}, which consists of the following main steps:}
\begin{itemize}
    \item[(1)] $\mbox{eig}(QS)\subset \left [0,\frac{4}{3}\right)$;
    \item[(2)] For any eigenvalue  $\lambda$  of $M$, $\mbox{eig}(QS)\subset \left [0,\frac{4}{3}\right)$ implies that $|\lambda|<1$ or $\lambda=1$;
    \item[(3)] If $1$ is an eigenvalue of $M$, then the eigenvalue $1$ has a complete set of
    eigenvectors;
    \item[(4)] Items (2) and (3) imply the convergence in expectation of the RPADMM.
\end{itemize}

To prove the above items, we need the following \blue{linear algebra} lemmas, whose proofs can be found in the Appendix.

\begin{lemma}\label{lemma:eigQS}
    Suppose that Assumption {\em \ref{asmp:4}} holds,  $S\in\R^{d\times d}$ is a symmetric matrix defined by \eqref{eq:defineR} \red{and} $Q$ is defined by \eqref{eq:Qdef}. Then, the matrix $Q$ is positive definite  and all the
    eigenvalues of $QS$ lie in $\left[0,\frac{4}{3}\right)$, i.e.,
    \begin{eqnarray}\label{eq:eigQS}
    \mbox{\em eig}(QS)\subset \left[0,\frac{4}{3}\right).
    \end{eqnarray}
\end{lemma}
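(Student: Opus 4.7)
My plan is to establish the lemma in four stages, the first three being short and the fourth carrying the main difficulty. First, I would prove that $Q$ is symmetric. For each permutation $\sigma$, denote its reverse by $\sigma^{r}$, defined by $\sigma^{r}(i) := \sigma(n+1-i)$. A direct inspection of the definition of $L_\sigma$ reveals that $L_{\sigma^{r}} = L_\sigma^\top$, and hence $L_{\sigma^{r}}^{-1} = (L_\sigma^{-1})^\top$. Pairing $\sigma$ with $\sigma^{r}$ in the summation \eqref{eq:Qdef} then yields $Q = Q^\top$.

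Second, I would establish $Q \succ 0$. Let $D$ denote the block-diagonal matrix whose $i$-th diagonal block is $H_{ii} + \beta A_i^\top A_i$. By Assumption~\ref{asmp:4}, $D \succ 0$, while $S = H + \beta A^\top A \succeq 0$, so $L_\sigma + L_\sigma^\top = S + D \succ 0$. In particular $L_\sigma$ is invertible and
\[
L_\sigma^{-1} + L_\sigma^{-\top} \;=\; L_\sigma^{-1}(L_\sigma + L_\sigma^\top)L_\sigma^{-\top} \;=\; L_\sigma^{-1}(S+D)L_\sigma^{-\top} \;\succ\; 0.
\]
Averaging over $\sigma \in \Gamma$ yields the useful identity $2Q = \frac{1}{n!}\sum_{\sigma\in\Gamma}L_\sigma^{-1}(S+D)L_\sigma^{-\top}$, which in particular gives $Q \succ 0$. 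Combined with the first step, $Q$ is symmetric positive definite. Third, since $Q \succ 0$, the matrix $QS$ is similar via $Q^{1/2}$ to the symmetric positive semidefinite matrix $Q^{1/2} S Q^{1/2}$, so all its eigenvalues are real and lie in $[0,\infty)$. This settles the lower endpoint.

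The fourth and main step is the strict upper bound $\lambda_{\max}(QS) < 4/3$. After the similarity above, this is equivalent to the matrix inequality $(4/3)\,I - S^{1/2} Q S^{1/2} \succ 0$. My plan is to exploit the identity $2Q = \frac{1}{n!}\sum_\sigma L_\sigma^{-1}(S+D)L_\sigma^{-\top}$ from the previous step together with the companion decomposition $S = (L_\sigma + L_\sigma^\top) - D$, grouping the resulting terms block-by-block and invoking a scalar Gauss-Seidel-type quadratic inequality in the spirit of \cite{SLY2015}. Pairing $\sigma$ with $\sigma^{r}$ throughout produces the required cancellations, and the sharp constant $4/3$ arises from optimizing an elementary one-variable quadratic estimate. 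Strict inequality follows from observing that attaining equality in that elementary step forces $Sv = 0$ and therefore corresponds only to the trivial eigenvalue zero. The main technical hurdle, relative to \cite{SLY2015}, is that $S = H + \beta A^\top A$ here mixes an arbitrary PSD summand $H$ with a normal-equation summand $\beta A^\top A$, so the combinatorial manipulations used there must be adapted to handle both sources of coupling simultaneously.
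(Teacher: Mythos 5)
Your first three steps are sound, and your argument for $Q\succ 0$ is actually cleaner than the paper's: the paper establishes positive definiteness of $Q$ inside a block induction on $n$, whereas your identity $L_\sigma+L_\sigma^\top=S+D\succ 0$, hence $L_\sigma^{-1}+L_\sigma^{-\top}=L_\sigma^{-1}(S+D)L_\sigma^{-\top}\succ 0$, gives $2Q\succ 0$ directly after averaging and pairing $\sigma$ with its reverse. The reduction of $\mathrm{eig}(QS)\subset[0,\infty)$ to the congruence $Q^{1/2}SQ^{1/2}\succeq 0$ is also fine.

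However, step four --- the strict bound $\rho(QS)<\tfrac{4}{3}$, which is the entire substance of the lemma --- is not a proof but a declaration of intent, and the route you sketch does not visibly lead anywhere. The constant $\tfrac{4}{3}$ is not obtainable from a per-permutation estimate combined with an ``elementary one-variable quadratic'': for a single $\sigma$ the Gauss--Seidel matrix $L_\sigma^{-1}S$ can have spectral radius well above $\tfrac{4}{3}$ when $n\ge 3$, so the bound is an emergent property of averaging over all $n!$ permutations, and any proof must quantify how the permutations interact. The paper (following \cite{SLY2015}) does this by induction on the number of blocks via the identity $Q=\frac{1}{n}\sum_{k=1}^n P_kQ_kP_k^\top$ with
\[
Q_k=\left[\begin{array}{cc}\hat Q_k & -\tfrac{1}{2}\hat Q_kW_k\\[2pt] -\tfrac{1}{2}W_k^\top\hat Q_k & I_{d_k}\end{array}\right],
\]
which expresses the $n$-block $Q$ through the $(n-1)$-block averaged inverses $\hat Q_k$, and then carries out a genuinely involved eigenvalue analysis (the matrix $Z$ in the appendix, with the resolvent $\Phi=-I+\lambda[(4\lambda-4)I+\Theta]^{-1}$ and several cases) to pass the bound $\tfrac{4}{3}$ from $n-1$ blocks to $n$ blocks. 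Nothing in your plan reproduces or replaces this recursion; ``grouping terms block-by-block'' and ``pairing $\sigma$ with $\sigma^r$'' do not by themselves produce it. Your closing claim that equality in the elementary estimate would force $Sv=0$ is likewise unsubstantiated --- in the paper strictness is inherited from the induction hypothesis $\hat\lambda<\tfrac{4}{3}$, not from a degeneracy argument. As it stands the proposal proves the easy half of the lemma and leaves the hard half open.
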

\medskip

\begin{lemma}\label{lemma:alg}
    Let $S$ and $T$ be two symmetric positive semidefinite matrices in $\R^{d\times d}$. Then, there exists a polynomial $p(x)$ such that
    \[
    {\rm det} \big((\lambda-1)^2 I + (2\lambda-1) S + (\lambda-1) T \big) = (\lambda-1)^l p(\lambda)
    \]
    and $p(1)>0$,
    where ${\rm det}(\cdot)$ denotes the determinant of some matrix,
    $l = 2d -{\rm Rank}(S) -{\rm Rank}(S+T)$ and ${\rm Rank}(\cdot)$ denotes the rank of some matrix.
\end{lemma}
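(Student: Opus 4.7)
The plan is to reduce the matrix inside the determinant to a block form via two orthogonal similarity transformations and to read off the order of $\lambda = 1$ as a root directly from that structure. First, I would substitute $\mu = \lambda - 1$, so the matrix becomes
\[
M(\mu) := \mu^2 I + \mu(2S+T) + S,
\]
and the goal reduces to showing that $\det(M(\mu))$ has $\mu = 0$ as a root of exact order $l = 2d - \mathrm{Rank}(S) - \mathrm{Rank}(S+T)$, with $\det(M(\mu))/\mu^l$ strictly positive at $\mu = 0$. Setting $r_1 = \mathrm{Rank}(S)$ and $r_2 = \mathrm{Rank}(S+T)$, I would choose an orthogonal $V$ with $V\zz S V = \mathrm{diag}(S_{11}, 0)$, where $S_{11}$ is $r_1 \times r_1$ and positive definite, and partition $V\zz T V$ accordingly as $\left(\begin{array}{cc} T_{11} & T_{12} \\ T_{12}\zz & T_{22} \end{array}\right)$. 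A direct computation then shows that the bottom $d-r_1$ rows of $V\zz M(\mu) V$ each carry an explicit factor of $\mu$, since the entire block $(\mu T_{12}\zz,\ \mu^2 I + \mu T_{22})$ is proportional to $\mu$.

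Next, I would further block-diagonalize $T_{22}$ by an orthogonal $W$ with $W\zz T_{22} W = \mathrm{diag}(T_{22}^{(1)}, 0)$, where $T_{22}^{(1)}$ is positive definite of size $r_3 \times r_3$. Two facts drive the count. First, since $S$ and $T$ are both positive semidefinite, $\mathrm{Ker}(S+T) = \mathrm{Ker}(S) \cap \mathrm{Ker}(T)$, and translating this into the new basis yields $r_3 = \mathrm{Rank}(T_{22}) = r_2 - r_1$. Second, since $T$ is positive semidefinite, any $w \in \mathrm{Ker}(T_{22})$ satisfies $w\zz T_{22} w = 0$, which forces the entire block column $(T_{12} w,\ T_{22} w)\zz$ of $T$ to vanish; hence $T_{12} W = [T_{12}^{(1)},\ 0]$. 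As a consequence, the doubly transformed matrix takes a $3 \times 3$ block form whose bottom-right block of size $d-r_2$ is exactly $\mu^2 I$, whose middle block row of size $r_3$ carries one factor of $\mu$ per row, and whose top block row of size $r_1$ carries no required factor.

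These factorizations together extract $\mu^{2(d-r_2)}$ from the bottom block and $\mu^{r_3}$ from the middle block, for a total of $\mu^{2(d-r_2)+r_3} = \mu^{2d - r_1 - r_2} = \mu^l$. Setting $p(\lambda) := \det(M(\lambda - 1))/(\lambda-1)^l$ and evaluating at $\mu = 0$ leaves a block triangular matrix with diagonal blocks $S_{11}$, $T_{22}^{(1)}$, and $I$; hence $p(1) = \det(S_{11}) \det(T_{22}^{(1)}) > 0$. The main obstacle will be establishing the two structural facts above, namely that $\mathrm{Rank}(T_{22}) = r_2 - r_1$ and that $T_{12}$ vanishes on $\mathrm{Ker}(T_{22})$; both are consequences of the positive semidefiniteness of $T$ together with the specific basis induced by diagonalizing $S$, and once they are in hand, the remainder of the argument is essentially bookkeeping of $\mu$-factors and a triangular evaluation at $\mu = 0$.
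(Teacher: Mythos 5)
Your argument is correct, and it takes a genuinely different route from the paper. The paper proves the lemma by induction on the dimension $d$: it diagonalizes $S$, splits into cases according to whether $S$ is positive definite and whether the leading entry of the transformed $T$ vanishes, and in the hardest subcase uses a cofactor-type recursion $g(\lambda;D,W)=(\lambda-1)^2g(\lambda;D',W')+(\lambda-1)g(\lambda;D',W'-ww^{\top})$ together with rank bookkeeping to assemble the factor $(\lambda-1)^{2d-\mathrm{Rank}(S)-\mathrm{Rank}(S+T)}$. You instead give a direct, non-inductive argument: after the substitution $\mu=\lambda-1$ and two orthogonal block reductions (first diagonalizing $S$, then the trailing block of $T$), the matrix $\mu^2I+\mu(2S+T)+S$ becomes visibly block-structured, with $\mu^2I_{d-r_2}$ in the corner and a factor $\mu$ in each of the $r_3=r_2-r_1$ middle rows, so the exponent $2(d-r_2)+(r_2-r_1)=l$ is read off directly and $p(1)=\det(S_{11})\det(T_{22}^{(1)})>0$ comes from a block-triangular evaluation at $\mu=0$. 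The two structural facts you isolate --- $\mathrm{Ker}(S+T)=\mathrm{Ker}(S)\cap\mathrm{Ker}(T)$ for positive semidefinite summands, and that a positive semidefinite matrix with a vanishing diagonal block has vanishing corresponding off-diagonal blocks --- are both standard and correctly deployed; they are the same facts the paper uses implicitly in its rank computations. What your approach buys is transparency: it shows exactly where the $l$ factors come from ($2$ per dimension of $\mathrm{Ker}(S+T)$, $1$ per dimension of $\mathrm{Ker}(S)$ outside $\mathrm{Ker}(S+T)$) and avoids the case analysis and induction entirely; the paper's recursion is more elementary in its tools but requires more careful tracking of ranks across the inductive step. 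Note also that the lemma only asserts the existence of $p$ with $p(1)>0$ (not that $l$ is the exact order of the root, though that follows from $p(1)\neq 0$), so your factorization delivers precisely the required statement, including in the degenerate cases $r_1=0$, $r_3=0$, or $r_2=d$, where the corresponding blocks are simply empty.
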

\medskip

\begin{lemma}\label{lemma:geo}
    Suppose $S\in\R^{d\times d}$ is a symmetric matrix defined by \eqref{eq:defineR}
     and $\beta>0$, then
    \[
    {\rm Rank}
    \left[
    \begin{array}{cc}
    S & -A\zz \\
    \beta A & 0
    \end{array}
    \right]
    = {\rm Rank}(S) +{\rm Rank}(\beta A^\top A).
    \]
\end{lemma}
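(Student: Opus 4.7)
The plan is to compute the kernel of
$$M := \begin{bmatrix} S & -A^\top \\ \beta A & 0 \end{bmatrix}$$
directly and then invoke the rank-nullity theorem, rather than attempting a Schur-complement reduction (which would require $S$ to be nonsingular, or delicate pseudo-inverse manipulations that the paper does not justify). Concretely, I will show the decomposition
$$\ker(M) \;=\; \ker(S)\times\ker(A^\top)\quad\text{inside } \R^d\times\R^m.$$
Once this kernel identity is in hand, the conclusion follows at once: $M$ is $(d+m)\times(d+m)$, so
$$\mathrm{Rank}(M) \;=\; (d+m) - \dim\ker(M) \;=\; \mathrm{Rank}(S) + \mathrm{Rank}(A^\top) \;=\; \mathrm{Rank}(S) + \mathrm{Rank}(\beta A^\top A),$$
where the last equality uses $\beta>0$.

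The inclusion ``$\supseteq$'' is essentially free. Given $x\in\ker(S)$, the identity
$0 = x^\top S x = x^\top H x + \beta\|Ax\|^2$
together with $H\succeq 0$ forces both $Hx=0$ and $Ax=0$; combined with $A^\top y=0$, both block rows of $M$ applied to $(x,y)$ then vanish.

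The main step is ``$\subseteq$'', and this is where the one nonroutine trick lives. Suppose $(x,y)$ satisfies $Sx = A^\top y$ and $Ax=0$, the two equations read off from $M(x,y)=0$. I will left-multiply the first equation by $x^\top$ and exploit the second to get
$$x^\top S x \;=\; x^\top A^\top y \;=\; (Ax)^\top y \;=\; 0.$$
Since $S = H + \beta A^\top A$ is positive semidefinite, $x^\top S x=0$ forces $Sx=0$; substituting back into $Sx=A^\top y$ then yields $A^\top y=0$, as desired.

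I do not anticipate a real obstacle: the only clever move is the inner-product computation above, which decouples the system into two independent kernel-membership conditions. Note also that Assumption \ref{asmp:4} is not needed for this lemma — only the positive semidefiniteness of $H$ (hence of $S$) and $\beta>0$ are used, which is why the lemma is stated in this generality and will later be combined with the block-diagonal positivity assumption in the surrounding argument.
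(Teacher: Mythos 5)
Your proof is correct and is essentially the same argument as the paper's: the central step in both is the inner-product computation $x^\top S x = (Ax)^\top y = 0$ combined with $S = H+\beta A^\top A \succeq 0$ to decouple the system into $Sx=0$ and $A^\top y=0$. The only cosmetic difference is that the paper phrases the conclusion as the equivalence of the null spaces of $M$ and of the block-diagonal matrix ${\rm diag}(S,\beta AA^\top)$, whereas you state the kernel identity $\ker(M)=\ker(S)\times\ker(A^\top)$ and apply rank--nullity directly.
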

\medskip

\blue{Here, Lemma \ref{lemma:eigQS}, Step (1) of the proof structure, is an enhanced version of Lemma 2 in
\cite{SLY2015} that is compatible with problem \eqref{eq:quadratic}.} The
proofs of Steps (2) and (3),  which reveal the essential nature of this extension and are hence the key contributions here, will be presented in Subsection \ref{sec:conv-2}.
The proof for Step (4) is given in Subsection \ref{sec:conv-3}.

\subsection{Eigenvalues of the Expected Update Matrix}\label{sec:conv-2}

One of the  main differences between the nonsingular linear system
case and that of the extended case is reflected in the following
lemma, where $1$ can be an eigenvalue of the expected update matrix
$M$.

\begin{lemma}\label{lemma:spectral}
    Suppose that Assumption {\em \ref{asmp:4}} holds 
    and $S\in\R^{d\times d}$ is a symmetric matrix defined by \eqref{eq:defineR}.
     Let $\lambda$ be any eigenvalue of $M$,
    then we have either $|\lambda|<1$ or $\lambda=1$.
\end{lemma}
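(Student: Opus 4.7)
The plan is to reduce the eigenvalue equation $Mv = \lambda v$ to a scalar quadratic in $\lambda$ whose roots can be controlled using the bound $\mathrm{eig}(QS)\subset [0,\tfrac{4}{3})$ from Lemma \ref{lemma:eigQS}. Writing $v = (u,w)$ and expanding the two block equations, I would first use the second block equation to derive the clean identity $(1-\lambda)w = \beta\lambda A u$. This has two useful consequences: if $u=0$ then the first block equation forces $A^\top w = 0$ (using that $Q$ is invertible), and the identity above then gives $(1-\lambda) w = 0$, so $\lambda = 1$ since $w \neq 0$; and if $\lambda \neq 1$ the identity expresses $w = \tfrac{\beta\lambda}{1-\lambda} Au$ directly in terms of $u$. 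Thus I may assume $\lambda\neq 1$ and $u\neq 0$, since the conclusion holds trivially otherwise.

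Eliminating $w$ from the first block equation and then using the splitting $S = H + \beta A^\top A$ to combine the coefficient of $QS$ with that of $\beta\lambda QA^\top A$, I expect the equation in $u$ to collapse to
\[
\big[(\lambda-1)^2 I + (2\lambda-1)\,\beta QA^\top A + (\lambda-1)\,QH\big] u = 0.
\]
Because $Q$ is positive definite (Lemma \ref{lemma:eigQS}), I can change variables by setting $u = Q^{1/2}\tilde u$ and left-multiplying by $Q^{-1/2}$ to obtain
\[
\big[(\lambda-1)^2 I + (2\lambda-1)\tilde S + (\lambda-1)\tilde T\big]\tilde u = 0,
\]
with $\tilde S := \beta Q^{1/2} A^\top A Q^{1/2}$ and $\tilde T := Q^{1/2} H Q^{1/2}$ both symmetric positive semidefinite. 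This is precisely the matrix pencil appearing in Lemma \ref{lemma:alg}, and its ``sum'' $\tilde S + \tilde T = Q^{1/2} S Q^{1/2}$ is similar to $QS$, so its eigenvalues still lie in $[0,\tfrac{4}{3})$.

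Taking the Hermitian inner product with $\tilde u$ and setting $a = \|\tilde u\|^2 > 0$, $b = \tilde u^*\tilde S\tilde u \geq 0$, and $c = \tilde u^*\tilde T\tilde u \geq 0$, the matrix identity reduces to the real scalar quadratic $f(\lambda) := a\lambda^2 + (2b+c-2a)\lambda + (a-b-c) = 0$, subject to the key constraint $b+c < \tfrac{4}{3} a$ inherited from the eigenvalue bound on $\tilde S + \tilde T$. I would then analyze its roots by evaluating at $\lambda = \pm 1$, which yields $f(1) = b \geq 0$ and $f(-1) = 4a - 3b - 2c > 0$, together with Vieta's formula $\lambda_+\lambda_- = 1 - (b+c)/a \in (-\tfrac{1}{3}, 1]$. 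In the complex-conjugate case $|\lambda|^2 = 1 - (b+c)/a < 1$ whenever $b+c > 0$; in the real-roots case the sign data $f(\pm 1) > 0$ combined with the bound on $\lambda_+ + \lambda_- = 2 - (2b+c)/a \in (-\tfrac{2}{3}, 2]$ force both roots into $(-1,1)$ unless $b = 0$, in which event $f$ factors as $(\lambda - 1)\bigl(a\lambda - (a-c)\bigr)$ with the non-unit root in $(-\tfrac{1}{3}, 1)$. Either way $|\lambda|\leq 1$ with equality only at $\lambda = 1$.

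The hardest part, I expect, will be the bookkeeping in the last scalar-quadratic step: the inequality $b+c < \tfrac{4}{3} a$, rather than the softer $b+c < 2a$, is precisely what is needed to keep $f(-1)$ strictly positive and to rule out real roots in $(-\infty,-1)\cup(1,\infty)$. Organizing the case analysis so that the $\tfrac{4}{3}$ threshold from Lemma \ref{lemma:eigQS} is used sharply, and so that the boundary case $b=0$ is handled without producing a spurious unimodular eigenvalue distinct from $1$, is what makes the whole argument close cleanly.
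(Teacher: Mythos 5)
Your proposal is correct and follows essentially the same route as the paper: after eliminating the dual block you arrive at exactly the quadratic the paper obtains (yours normalized by $u^*Q^{-1}u$ rather than by $u^*Su$), and the root analysis via $f(1)=b\ge 0$, $f(-1)>0$ and the Vieta product uses the $\frac{4}{3}$ bound from Lemma~\ref{lemma:eigQS} in the same way. The only organizational differences are that you work with $M$ directly (obtaining $(1-\lambda)w=\beta\lambda Au$) and symmetrize with $Q^{1/2}$ to reach the pencil of Lemma~\ref{lemma:alg}, whereas the paper first passes to the similar matrix $M'$; your normalization also bypasses the paper's separate check that $v_1^*Sv_1\neq 0$, since $b+c=0$ forces $\lambda=1$ automatically.
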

\begin{proof}\blue{
    We introduce the following notation:
    \begin{eqnarray}\label{eq:gammadef}
    \gamma(u) = \frac{\beta u\ct A\zz Au}{u\ct S u} \quad \mbox{for all}\,\,
    u\in\bC^n \,\,\mbox{such that} \quad Su\neq 0,
    \end{eqnarray}
    where $u^*$ is the complex conjugate of $u$.
    Recalling that $S =H+\beta A^\top A$, we know
    \begin{eqnarray}\label{eq:gammarange}
    0\leq \gamma(u)\leq 1 \quad \mbox{for all}\,\, u\in\bC^n \,\,\mbox{such that} \quad Su\neq 0.
    \end{eqnarray}
    Similarly, we define
    \begin{eqnarray}\label{eq:kappadef}
    \kappa(u) = \frac{u\ct Q\inv u}{u\ct Su}\quad \mbox{for all}\,\,
    u\in\bC^n \,\,\mbox{such that} \quad Su\neq 0.
    \end{eqnarray}
    Note that $\mbox{eig}(QS)<{4\over 3}$ by  Lemma \ref{lemma:eigQS}. Thus,
    we know that
    $
    {4\over 3}Q^{-1} -S \succeq 0,
    $
    and therefore
    \begin{eqnarray}\label{eq:kapparange}
    0 < \kappa(u)\inv < \frac{4}{3} \quad \mbox{for all}\,\, u\in\bC^n \,\,\mbox{such that} \quad Su\neq 0.
    \end{eqnarray}}
  	Note that $M$ can be factorized as
    \begin{eqnarray}\label{eq:Mfact}
    M
    =
    \left[
    \begin{array}{cc}
    I & 0\\
    -\beta A & I
    \end{array}
    \right]
    \left[
    \begin{array}{cc}
    I-QS & QA\zz\\
    0 & I
    \end{array}
    \right].
    \end{eqnarray}
    Switching the order of the products, we obtain a new matrix
    \begin{eqnarray}
    M':= \left[
    \begin{array}{cc}
    I-QS & QA\zz\\
    0 & I
    \end{array}
    \right]\left[
    \begin{array}{cc}
    I & 0\\
    -\beta A & I
    \end{array}
    \right]
    = \left[
    \begin{array}{cc}
    I-QS - \beta QA\zz A & QA\zz\\
    -\beta A & I
    \end{array}
    \right].
    \end{eqnarray}
    Note that $\mbox{eig}(M)=\mbox{eig}(M')$. Thus, it suffices to show
      either $\rho(M')<1$ or $1$ is the eigenvalue of $M'$.

    Let  $\left(\lambda,
    \left[
    \begin{array}{c}
    v_1 \\
    v_2
    \end{array}
    \right]
    \right)$ be an eigenpair of $M'$, namely,
    \begin{eqnarray*}
        \left[
        \begin{array}{cc}
            I-QS - \beta QA\zz A & QA\zz\\
            -\beta A & I
        \end{array}
        \right] \left[
        \begin{array}{c}
            v_1 \\
            v_2
        \end{array}
        \right] = \lambda \left[
        \begin{array}{c}
            v_1 \\
            v_2
        \end{array}
        \right],
    \end{eqnarray*}
    which implies
    \begin{eqnarray}\label{eq:eig1}
    (I-QS -
    \beta QA\zz A)v_1 + QA\zz v_2 &=& \lambda v_1;\\
    \label{eq:eig2}
    -\beta Av_1 + v_2 &=& \lambda v_2.
    \end{eqnarray}
    Equality \eqref{eq:eig2} gives
    \begin{eqnarray}\label{eq:eig3}
    (1-\lambda) v_2 = \beta Av_1.
    \end{eqnarray}
    Suppose $\lambda\neq 1$.
    Hence, it holds that
    \begin{eqnarray*}
        v_2=\frac{\beta}{1-\lambda}Av_1.
    \end{eqnarray*}
    Clearly, this relation implies that $v_1\neq 0$.
    Substituting the above relation into
    \eqref{eq:eig1}, we have
    \begin{eqnarray*}
        QSv_1 = (1-\lambda)v_1 + \frac{\lambda\beta}{1-\lambda}QA\zz Av_1.
    \end{eqnarray*}
   Using the nonsingularity of $Q$, the above equality can be written as
    \begin{eqnarray*}
        Sv_1 = (1-\lambda)Q\inv v_1 + \frac{\lambda\beta}{1-\lambda}A\zz Av_1.
    \end{eqnarray*}
    Multiplying both sides of the above equality by $v_1\ct$, we arrive at
    \begin{eqnarray}\label{eq:eig5}
    v_1\ct Sv_1 = (1-\lambda)v_1\ct Q\inv v_1 +
    \frac{\lambda\beta}{1-\lambda}v_1\ct A\zz Av_1,
    \end{eqnarray}
    We claim that $v_1^*Sv_1\neq 0$. Otherwise,  $v_1^*A^\top Av_1=0$
    and therefore $\lambda=1$ from the inequality $v_1^*Q^{-1}v_1>0$ and \eqref{eq:eig5}.
    This contradicts our assumption that $\lambda\neq 1$.
    Multiplying both sides of \eqref{eq:eig5} by $(v_1\ct Sv_1)\inv$ and
    substituting the definitions \eqref{eq:gammadef} and \eqref{eq:kappadef} into
    the above relation, we  obtain the following key equality with respect to $\lambda$
    \begin{equation*}
        1 = (1-\lambda)\kappa(v_1) + \frac{\lambda}{1-\lambda}\gamma(v_1),
    \end{equation*}
    which can be further reformulated as
    \red{\begin{equation*}
        \kappa(v_1)\lambda^2 - (2\kappa(v_1)-\gamma(v_1) - 1)\lambda  + \kappa(v_1) -1 =0.
    \end{equation*}}
    Because $\kappa(v_1)$ is positive, we have
    \begin{eqnarray}\label{eq:equation}
    \lambda^2 + \left(\kappa(v_1)\inv(\gamma(v_1) + 1)-2\right) \lambda
    + \left(1 -\kappa(v_1)\inv\right) =0.
    \end{eqnarray}
The discriminant of the quadratic equation in \eqref{eq:equation} is
    \begin{eqnarray}\label{eq:discri}
    \Delta &=& \left(\kappa(v_1)\inv(\gamma(v_1) + 1)-2\right)^2
    - 4\left(1 -\kappa(v_1)\inv\right)\nonumber\\
    &=& \kappa(v_1)\inv \left(\kappa(v_1)\inv (\gamma(v_1)+1)^2 -4\gamma(v_1)\right).
    \end{eqnarray}
   Note that
    \begin{eqnarray*}
        0\leq   \frac{4\gamma(v_1)}{(\gamma(v_1)+1)^2} \leq 1
    \end{eqnarray*}
    holds as a result of \eqref{eq:gammarange}.
    Recalling \eqref{eq:kapparange},
    we consider the following two cases.
    \begin{itemize}
        \item[Case 1:] $0<\kappa(v_1)\inv<\frac{4\gamma(v_1)}{(\gamma(v_1)+1)^2}$.
        This means the discriminant $\Delta<0$, and the two solutions  of \eqref{eq:equation} satisfy
        \[
        |\lambda_{1,2}|=\sqrt{\lambda_1*\lambda_2}= \sqrt{1-\kappa(v_1)^{-1}}<1.
        \]
        \item[Case 2:] $\frac{4\gamma(v_1)}{(\gamma(v_1)+1)^2}\leq \kappa(v_1)\inv <\frac{4}{3}$.
        This means the discriminant $\Delta\geq 0$, and the two solutions
        are real. Let
        \[
        f(\lambda) := \lambda^2 + \left(\kappa(v_1)\inv(\gamma(v_1) + 1)-2\right) \lambda
        + \left(1 -\kappa(v_1)\inv\right).
        \]
        By \eqref{eq:gammarange} and \eqref{eq:kapparange}, we know that
        \[
        \left\{
        \begin{array}{l}
        f(1) = {\gamma(v_1) \over \kappa(v_1) }\geq 0, \\[0.2cm]
        f(-1) =  4-{{\gamma(v_1)+2}\over \kappa(v_1)}>0,\\[0.2cm]
        \lambda_1 +\lambda_2 = 2- {\gamma(v_1)+1\over \kappa(v_1)}\in (-2,\,2),
        \end{array}
        \right.
        \]
        which together with $\lambda \neq 1$, establishes that $|\lambda|<1$.
    \end{itemize}
    Thus, it can be concluded that either  $\lambda =1$ or
    $|\lambda|<1$ holds. \quad
\end{proof}

We now consider the case where $M$ has an eigenvalue equal to 1 and show that it has  a complete
set of eigenvectors.

\begin{lemma}\label{lemma:diag}
    Suppose that Assumption {\em \ref{asmp:4}} holds,
    and $M\in\R^{(m+d)\times (m+d)}$ is a matrix defined by \eqref{eq:Mdef}.
    Suppose that $1$ is an eigenvalue of $M$,
    then the algebraic multiplicity of $1$ for $M$ equals its geometric multiplicity. Namely,
    the eigenvalue $1$ has a complete set of eigenvectors.
\end{lemma}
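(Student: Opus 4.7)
The plan is to exploit the similarity $M \sim M'$ established inside the proof of Lemma~\ref{lemma:spectral}: the factorization~\eqref{eq:Mfact} expresses $M$ as a product whose left factor is lower block-triangular with identity diagonal blocks and hence invertible, and conjugation by it sends $M$ to $M'$. Consequently $M$ and $M'$ share the same Jordan structure, and it suffices to prove $\ker(M'-I)^2 = \ker(M'-I)$, which is exactly the condition that the algebraic and geometric multiplicities of $1$ coincide for $M'$.

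The first step is to characterize $\ker(M'-I)$. Writing out the two block equations of $(M'-I)(v_1,v_2)^\top = 0$ and using the invertibility of $Q$ (Lemma~\ref{lemma:eigQS}), the bottom block yields $Av_1 = 0$ while the top block, upon multiplication by $Q^{-1}$, reads $Sv_1 + \beta A^\top A v_1 = A^\top v_2$. With $Av_1 = 0$ and $S = H + \beta A^\top A$, this collapses to $Hv_1 = A^\top v_2$. Pairing with $v_1^\top$ and using $v_1^\top A^\top v_2 = (Av_1)^\top v_2 = 0$ gives $v_1^\top H v_1 = 0$, and $H \succeq 0$ then forces $Hv_1 = 0$ and $A^\top v_2 = 0$. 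Hence $\ker(M'-I) = \{(v_1,v_2) : Av_1 = 0,\, Hv_1 = 0,\, A^\top v_2 = 0\}$.

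Next, I would take $v = (v_1,v_2)^\top$ satisfying $(M'-I)^2 v = 0$ and set $u = (u_1,u_2)^\top := (M'-I)v \in \ker(M'-I)$. The bottom block of $(M'-I)v$ gives $u_2 = -\beta A v_1$; applying the characterization above to $u$ forces $A^\top u_2 = -\beta A^\top A v_1 = 0$, hence $Av_1 = 0$ and so $u_2 = 0$. With $Av_1 = 0$, the top block simplifies to $u_1 = Q(A^\top v_2 - Hv_1)$.

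The crux is to show $u_1 = 0$. Since $u \in \ker(M'-I)$, we have $Au_1 = 0$ and $Hu_1 = 0$. Pairing the relation $Q^{-1} u_1 = A^\top v_2 - Hv_1$ on the left with $u_1^\top$ yields
\[
u_1^\top Q^{-1} u_1 \;=\; (Au_1)^\top v_2 - (Hu_1)^\top v_1 \;=\; 0,
\]
and the positive definiteness of $Q$ (Lemma~\ref{lemma:eigQS}) forces $u_1 = 0$. Thus $u = 0$, i.e., $(M'-I)v = 0$, which proves $\ker(M'-I)^2 = \ker(M'-I)$. I expect this final positive-definite step to be the main obstacle: without $Q \succ 0$ one would only conclude $u_1 \in \ker S$, and the generalized-eigenvector argument would stall. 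It is precisely Assumption~\ref{asmp:4}, via Lemma~\ref{lemma:eigQS}, that supplies the $Q \succ 0$ needed to close the loop.
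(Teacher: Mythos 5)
Your proof is correct, but it takes a genuinely different route from the paper. The paper computes the characteristic polynomial $\det(\lambda I - M)$ explicitly, extracts the algebraic multiplicity of $1$ via the polynomial factorization result of Lemma~\ref{lemma:alg}, computes the geometric multiplicity as $m+d-{\rm Rank}(I-M)$ via the rank identity of Lemma~\ref{lemma:geo}, and checks that the two numbers agree. You instead pass to the similar matrix $M'$ through the factorization \eqref{eq:Mfact} and show directly that $\ker(M'-I)^2=\ker(M'-I)$, i.e., that there are no generalized eigenvectors of order two. Every step of your argument checks out: the characterization of $\ker(M'-I)$ as $\{(v_1,v_2): Av_1=0,\ Hv_1=0,\ A^\top v_2=0\}$ follows from $Q\succ 0$ and $H\succeq 0$ exactly as you say; for $u=(M'-I)v\in\ker(M'-I)$ the condition $A^\top u_2=-\beta A^\top A v_1=0$ does force $Av_1=0$ and hence $u_2=0$; and the final pairing $u_1^\top Q^{-1}u_1=(Au_1)^\top v_2-(Hu_1)^\top v_1=0$ (using symmetry of $H$ and the fact that $Q$, being the symmetric positive definite average of Lemma~\ref{lemma:eigQS}, has positive definite inverse) kills $u_1$. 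Your approach is more elementary and entirely bypasses the two appendix lemmas (Lemma~\ref{lemma:alg} and Lemma~\ref{lemma:geo}) that the paper needs for this step; what the paper's determinant computation buys in exchange is an explicit formula for the multiplicity of the eigenvalue $1$, namely $m+d-{\rm Rank}(\beta A^\top A)-{\rm Rank}(\beta A^\top A+H)$, which your kernel argument does not produce but which is not needed for the statement of the lemma.
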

\begin{proof}
 By direct computation, it holds that
    \begin{eqnarray}
    {\rm det}( \lambda I -M ) &= &{\rm det}\left[
    \begin{array}{cc}
    (\lambda-1)I + QS & -QA\zz\\
    \beta A-\beta AQS & (\lambda-1)I + \beta AQA\zz
    \end{array}
    \right] \nonumber \\
    & =&{\rm det} \left[
    \begin{array}{cc}
    (\lambda-1)I + QS & -QA\zz\\
    \lambda\beta A & (\lambda-1)I
    \end{array}
    \right] \nonumber\\
    &= &
    {\rm det} \left[
    \begin{array}{cc}
    \displaystyle (\lambda-1)I + QS  +{\lambda\beta\over \lambda-1}QA^\top A& -QA\zz \nonumber\\[0.3cm]
    0 & (\lambda-1)I
    \end{array}
    \right]\\[0.3cm]
    & =& (\lambda-1)^{m-d}\, {\rm det} \left[(\lambda-1)^2I  + (2\lambda-1)\beta QA^\top A + (\lambda-1)Q H \right] \nonumber \\[0.3cm]
    & =  & (\lambda-1)^{m-d}\,{\rm det} \left[(\lambda-1)^2I  + (2\lambda-1)\beta Q^{1/2}A^\top A Q^{1/2}+ (\lambda-1) Q^{1/2}H Q^{1/2}  \right]\nonumber.
    \end{eqnarray}
    This, together with Lemma \ref{lemma:alg}, shows that the algebraic multiplicity of $1$ for $M$ equals
    \begin{eqnarray}\label{eq:AM1}
    && m-d +2d- {\rm Rank} (Q^{1/2}\beta A^\top A Q^{1/2}) - {\rm Rank} (Q^{1/2}(\beta A^\top A +H) Q^{1/2})\nonumber \\[0.1cm]
    &  & \;= m+ d- {\rm Rank} (\beta A^\top A) - {\rm Rank} (\beta A^\top A +H),
    \end{eqnarray}
    where the equality follows from $Q\succ 0$ by Lemma \ref{lemma:eigQS}.
    In addition, the geometric multiplicity of  $1$ for $M$ is identical to the following quantity:
    \red{\begin{eqnarray}\label{eq:GM1}
    & & m+d -{\rm Rank} (I-M) \nonumber \\
    &  &\; = m+d -{\rm Rank} \left[
    \begin{array}{cc}
    QS & -QA\zz\\
    \beta A-\beta AQS &  \beta AQA\zz
    \end{array}
    \right] \nonumber\\
    &  &\; =  m+d -{\rm Rank} \left[
    \begin{array}{cc}
    QS & -QA\zz\\
    \beta A &  0
    \end{array}
    \right] \\
    &  &\; =  m+d -{\rm Rank} \left[
    \begin{array}{cc}
    S & -A\zz\\
    \beta A &  0
    \end{array}
    \right],\nonumber
    \end{eqnarray}}
    where the second equality follows from the rank invariant property under elementary transformation, and the final equality holds because $Q\succ 0$ by Lemma \ref{lemma:eigQS}. Combining \eqref{eq:AM1}, \eqref{eq:GM1},
    Lemma \ref{lemma:geo}, and the definition of $S$, we derive the desired
    conclusion.\quad
\end{proof}

\subsection{Expected Convergence}\label{sec:conv-3}
Step (4) can be formulated as the following theorem.
\begin{theorem}\label{thm:conv}
Assume Assumption {\em \ref{asmp:4}} holds. Suppose  RPADMM \eqref{eq:rpadmm} is employed to solve the nonseparable quadratic programming \eqref{eq:quadratic}. Then, the expected iterative sequence converges to some KKT point of
   \eqref{eq:quadratic}.
   \end{theorem}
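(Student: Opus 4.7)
The plan is to reduce the problem to a deterministic affine recurrence for the expected iterate and then exploit the spectral structure of $M$ established in Lemmas~\ref{lemma:spectral} and~\ref{lemma:diag}. I first cast one step of \eqref{eq:RPADMM} as $z^{k+1}=M_{\sigma_k}z^k+\bar{L}_{\sigma_k}^{-1}\bar{b}$, where $\sigma_k$ is drawn uniformly from $\Gamma$ independently across iterations and independent of $z^k$. Taking expectations and using this independence collapses the randomness into the deterministic affine recurrence $\bar z^{k+1}=M\bar z^k+c$, with $\bar z^k:=E(z^k)$, $c:=E_\sigma(\bar{L}_\sigma^{-1})\bar{b}$, and $M$ the expected update matrix from \eqref{eq:defineM}. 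The task thus reduces to showing that this linear iteration converges to a KKT point of \eqref{eq:quadratic}.

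Lemma~\ref{lemma:spectral} shows every eigenvalue of $M$ either lies strictly inside the unit disk or equals $1$, and Lemma~\ref{lemma:diag} shows the eigenvalue $1$ is semi-simple. Together these yield a similarity $M=P\,\mathrm{diag}(I_s,N)\,P^{-1}$ with $\rho(N)<1$, so $M^k$ converges geometrically to the spectral projector $\Pi$ onto $\ker(I-M)$. In particular $\{M^k\}$ is bounded, which is the key property that would fail in general if $\lambda=1$ were not semi-simple.

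To finish, I need a fixed point of the averaged map. Under Assumption~\ref{asmp:4} each subproblem of RPADMM has a unique minimizer, so a direct check shows that any KKT point $z^\ast=(x^\ast,\mu^\ast)$ of \eqref{eq:quadratic} is a fixed point of every deterministic update $z\mapsto M_\sigma z+\bar{L}_\sigma^{-1}\bar{b}$ and hence satisfies $(I-M)z^\ast=c$; such a $z^\ast$ exists because the KKT set is assumed nonempty. Subtracting yields $\bar z^{k+1}-z^\ast=M(\bar z^k-z^\ast)$, whence $\bar z^k\to z^\infty:=z^\ast+\Pi(\bar z^0-z^\ast)$. To identify $z^\infty$ as a KKT point I unpack $(I-M)z^\infty=c$ block-by-block: using $Q\succ 0$ from Lemma~\ref{lemma:eigQS} the first block row reduces to $(H+\beta A^\top A)x^\infty-A^\top\mu^\infty=-g+\beta A^\top b$; plugging this back into the second block row yields $Ax^\infty=b$, and substituting $Ax^\infty=b$ into the first then gives $Hx^\infty+g-A^\top\mu^\infty=0$. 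Thus $z^\infty$ is a KKT point of \eqref{eq:quadratic}.

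The main obstacle is essentially already dispatched by Lemma~\ref{lemma:diag}: without semi-simplicity of the eigenvalue $1$, $\|M^k\|$ could grow polynomially in $k$ and the expected iteration would generally be unbounded even though $\rho(M)=1$. Semi-simplicity is precisely what distinguishes the present, possibly non-unique-solution setting from the nonsingular-square-system case of \cite{SLY2015}, where $\rho(M)<1$ made matters markedly simpler. Once Lemmas~\ref{lemma:spectral} and~\ref{lemma:diag} are in hand, the remainder is a routine Jordan-decomposition argument combined with the fixed-point calculation above.
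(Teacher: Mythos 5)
Your proposal is correct and follows essentially the same route as the paper: reduce to the deterministic recurrence for the expected iterate, invoke Lemma~\ref{lemma:spectral} and Lemma~\ref{lemma:diag} to get a Jordan decomposition in which the eigenvalue $1$ is semi-simple and all other eigenvalues lie strictly inside the unit disk, conclude $M^k$ converges to the spectral projector onto $\ker(I-M)$, and identify the limit as a KKT point by unpacking $(I-M)z=0$ using $Q\succ 0$. Your centering at a KKT fixed point of each $M_\sigma$-update and your phrasing of the limit via the projector $\Pi$ (rather than as "an eigenvector", which could be zero) are only cosmetic refinements of the paper's argument.
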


\begin{proof}
Let $(\bar x,\bar \mu)$ be a KKT point of \eqref{eq:quadratic}, i.e.,
\begin{eqnarray}\label{eq:optimality1}
\left[
\begin{array}{cc}
H & -A\zz \\
\beta A & 0
\end{array}
\right]
\left[
\begin{array}{c}
\bar x \\
\bar \mu
\end{array}
\right]
=
\left[
\begin{array}{c}
-g \\
\beta b
\end{array}
\right].
\end{eqnarray}
Denote $(x^k,\,\mu^k)$ by the $kth$ iterate of the algorithm. It follows from \red{\eqref{eq:RPADMM} and} \eqref{eq:optimality1} that
    \[
    E_{\sigma}[x^{k+1}-\bar x;\mu^{k+1}-\bar \mu] = M E_{\sigma} [x^{k}-\bar x;\mu^{k}-\bar \mu].
    \]
    By Lemma \ref{lemma:spectral}, we know that  $\rho(M)\leq 1$. We proceed with the proof by considering the following two cases.
    \begin{itemize}
        \item[Case 1:] $\rho(M)<1$. It holds that
    $
    E_\sigma x^k \rightarrow \bar x
    $
    and
    $
    E_\sigma \mu^k \rightarrow \bar \mu
    $
    as $k\rightarrow \infty$.
    Theorem \ref{thm:conv} is valid.
        \item[Case 2:] $\rho(M)=1$. By Lemmas \ref{lemma:spectral} and \ref{lemma:diag}, we know that all eigenvalues of $M$ with modulus $1$ must be $1$, which has a complete set of eigenvectors.
   As a result, $M$ admits the following Jordan decomposition:
    \[
    M = P^{-1}\left[
    \begin{array}{cccccc}
    1& & & & & \\
    &\ddots& & & & \\
    && 1 & & &\\
    &&   & \rho_1 &* &\\
    &&  &             &\ddots &*\\
    &&  &            &            & \rho_t
    \end{array}
    \right] P,
    \]
    where $P$ is a nonsingular matrix and $|\rho_i|<1$ for all $i=1,\ldots,t$. It is easily verified that
    \[
    M^k \rightarrow  P^{-1}
    \left[
    \begin{array}{cccccc}
    1& & & & & \\
    &\ddots& & & & \\
    && 1 & & &\\
    &&   & 0 & &\\
    &&  &             &\ddots &\\
    &&  &            &            & 0
    \end{array}\right]P
    \]
    as $k\rightarrow \infty$, and therefore the sequence $\{E[x^{k+1}-\bar x;\mu^{k+1}-\bar \mu]\}$ converges to an eigenvector of $M$
    associated with the eigenvalue $1$, say $[x^0;\mu^0]$. Then
    \[
    (I-M) [x^0;\mu^0] =0,
    \]
    which, after some manipulation, shows that
    \begin{eqnarray}
    \left[
    \begin{array}{cc}
    H & -A\zz \\
    \beta A & 0
    \end{array}
    \right]
    \left[
    \begin{array}{c}
    x^0 \\
    \mu^0
    \end{array}
    \right]
    = 0.
    \end{eqnarray}
    Therefore, $Ex^k \rightarrow \bar x + x^0$ and $E\mu^k \rightarrow \bar \mu  + \mu^0$ with
    \begin{eqnarray}
    \left[
    \begin{array}{cc}
    H & -A\zz \\
    \beta A & 0
    \end{array}
    \right]
    \left[
    \begin{array}{c}
    \bar x +x^0 \\
    \bar \mu +\mu^0
    \end{array}
    \right]
    =
    \left[
    \begin{array}{c}
    -g \\
    \beta b
    \end{array}
    \right].
    \end{eqnarray}
    This means that $(\bar x+x^0, \bar \mu+\mu^0)$ is a KKT point of \eqref{eq:quadratic}.
    \end{itemize}
   This completes the proof. \quad
\end{proof}

One byproduct of Theorem \ref{thm:conv} is the expected convergence result for RPBCD  when applied to convex quadratic optimization. To the best of our knowledge, this is the first expected iterate convergence result of RPBCD.
\begin{corollary}\label{Conver-Alg2-e}
Assume $H_{ii}\succ 0$ for $i=1,2,\ldots,n$. If RPBCD is used to solve the unconstrained quadratic programming problem
\begin{equation}\label{prob:quun}
\displaystyle \min_{x\in \R^d}\,\,  {1\over 2}x^\top Hx +g^\top x,  \\[5pt]
\end{equation}
then the expected iterative sequence converges to an optimal solution of
\eqref{prob:quun}.
\end{corollary}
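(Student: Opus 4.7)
The plan is to obtain Corollary \ref{Conver-Alg2-e} as a direct specialization of Theorem \ref{thm:conv} to the unconstrained setting. Observe that the unconstrained minimization problem \eqref{prob:quun} may be viewed as the degenerate instance of \eqref{eq:quadratic} in which the linear equality constraint is vacuous; equivalently, one formally sets $A_i=0$ for every $i$ and lets $b=0$, so that the augmented Lagrangian \eqref{eq:lag} collapses to the objective $\tfrac{1}{2}x^\top H x+g^\top x$ and the multiplier update becomes trivial. With this identification, each block subproblem in the RPADMM scheme \eqref{eq:rpadmm} coincides exactly with the corresponding RPBCD block update applied to \eqref{prob:quun}.

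Next, I would verify that Assumption \ref{asmp:4} is satisfied. Under the substitution $A_i=0$, the assumption reduces to
\[
\mathrm{diag}(H_{11},H_{22},\ldots,H_{nn})\succ 0,
\]
which is exactly the hypothesis $H_{ii}\succ 0$ for all $i=1,\ldots,n$ of the corollary. Moreover, since $H\succeq 0$ and $g\in \mathrm{Range}(H)$ is implied by the assumed existence of a KKT point for \eqref{prob:quun} (equivalently, the problem attains its minimum), the KKT system \eqref{eq:optimality1} in the proof of Theorem \ref{thm:conv} reduces to the first-order optimality condition $H\bar x+g=0$, so a KKT point coincides with an optimal solution.

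Applying Theorem \ref{thm:conv} now yields that the expected iterate $E_\sigma x^k$ converges to some $\bar x + x^0$ where $(\bar x+x^0,\bar\mu+\mu^0)$ is a KKT point of the (degenerate) constrained problem; dropping the dual component, this simply asserts $H(\bar x+x^0)+g=0$, i.e., $\bar x + x^0$ is an optimal solution of \eqref{prob:quun}. This completes the argument.

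The step that warrants the most care is simply the reduction itself, i.e., checking that the expected-update-matrix analysis in Subsections \ref{sec:conv-2}--\ref{sec:conv-3} remains valid when $A$ is absent: one must confirm that $Q=E_\sigma(L_\sigma^{-1})$ is still well-defined and positive definite (which follows from $H_{ii}\succ 0$ via Lemma \ref{lemma:eigQS} with $\beta A^\top A=0$), and that the Jordan-block argument goes through even though the dual block of $M$ degenerates. Since no other ingredient of the proof of Theorem \ref{thm:conv} uses the presence of $A$ in an essential way, the conclusion carries over verbatim to RPBCD, giving the claimed expected iterate convergence.
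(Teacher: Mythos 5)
Your proposal is correct and is precisely the argument the paper intends: the corollary is stated as a byproduct of Theorem \ref{thm:conv}, obtained by setting $A_i=0$ and $b=0$ so that Assumption \ref{asmp:4} reduces to $H_{ii}\succ 0$, the dual update becomes trivial, and a KKT point of the degenerate constrained problem is exactly a minimizer of \eqref{prob:quun}. Your added check that the spectral analysis (positive definiteness of $Q$, the eigenvalue bound, and the diagonalizability of the eigenvalue $1$ of $M$) survives the degeneration $A=0$ is exactly the right point to verify, and it does go through.
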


\subsection{Convergence Rate   Comparison to Cyclic BCD}
There is a common perception that RPBCD dominates cyclic BCD in terms of performance (see  \cite{Steve2015}, for example). In this subsection, we theoretically show that this is not generally true. Consider the quadratic programming problem
\eqref{prob:quun}, where $x$ is split into two blocks
$(x_1,\,x_2)$ with $x_1\in \R^{d_1}$ and $x_2\in \R^{d_2}$,
and $d=d_1+d_2$.
Accordingly, we denote
\[
H=\left[\begin{array}{cc}
H_{11} & H_{12}\\ [0.1cm]
H_{12}\zz & H_{22}\\
\end{array}\right].
\]
By applying different  minimizaing orders to the variables, the cyclic BCD (Gauss-Seidel method) has the following two iterative
schemes:
\begin{eqnarray*}
x^{k+1} = M_1 x^k - \left[\begin{array}{cc}
H_{11} & 0\\[0.1cm]
H_{12}\zz & H_{22}\\
\end{array}\right]\inv b
\end{eqnarray*}
and
\begin{eqnarray*}
x^{k+1} = M_2 x^k - \left[\begin{array}{cc}
H_{11} & H_{12}\\[0.1cm]
0 & H_{22}\\
\end{array}\right]\inv b,
\end{eqnarray*}
where
\begin{eqnarray}\label{eq:M1M2}
M_1=\left[\begin{array}{cc}
H_{11} & 0\\ [0.1cm]
H_{12}\zz & H_{22}\\
\end{array}\right]\inv
\left[\begin{array}{cc}
0 & -H_{12}\\
0 & 0\\
\end{array}\right]\,\,\,\,{\rm and}\,\,\,\,
M_2=\left[\begin{array}{cc}
H_{11} & H_{12}\\[0.1cm]
0 & H_{22}\\
\end{array}\right]\inv
\left[\begin{array}{cc}
0 & 0\\ [0.1cm]
-H_{12}\zz & 0\\
\end{array}\right].
\end{eqnarray}
The asymptotic convergence rates of these two iterative schemes are
$\rho(M_1)$ and $\rho(M_2)$, respectively. In this case, the expected asymptotic convergence rate of RPBCD is $\rho((M_1+M_2)/2)$.
The following proposition reveals the relationship between these rates.

\begin{proposition}
Suppose 
$H_{11}\succ 0$ and $H_{22}\succ 0$.
Let $M_1$ and $M_2$ be defined by \eqref{eq:M1M2},
and $M_3=(M_1+M_2)/2$. Then, it holds that
    $$\rho(M_1) = \rho(M_2) \leq \rho(M_3).$$
\end{proposition}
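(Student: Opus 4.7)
The plan is to reduce the comparison to a clean calculation in terms of the singular values of the single matrix $T := H_{11}^{-1/2} H_{12} H_{22}^{-1/2}$. First I would carry out the block inversions in \eqref{eq:M1M2} explicitly, obtaining
\begin{equation*}
M_1 = \begin{pmatrix} 0 & -H_{11}^{-1}H_{12} \\ 0 & H_{22}^{-1}H_{12}^{\top} H_{11}^{-1}H_{12} \end{pmatrix}, \qquad
M_2 = \begin{pmatrix} H_{11}^{-1}H_{12}H_{22}^{-1}H_{12}^{\top} & 0 \\ -H_{22}^{-1}H_{12}^{\top} & 0 \end{pmatrix}.
\end{equation*}
Each is block-triangular with a zero diagonal block, so the nonzero spectra of $M_1$ and $M_2$ coincide with those of the bottom-right and top-left blocks, respectively. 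These blocks share their nonzero eigenvalues by the standard $AB$/$BA$ identity, and each is similar, through the positive definite factor $H_{ii}^{1/2}$, to $T^{\top}T$ or $TT^{\top}$. Hence $\rho(M_1) = \rho(M_2) = \sigma_{\max}(T)^2 =: \sigma^2$.

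Second, I would analyze $M_3 = (M_1 + M_2)/2$ by a similarity with $\operatorname{diag}(H_{11}^{1/2}, H_{22}^{1/2})$, which replaces $M_3$ by $\tilde M_3 := \tfrac{1}{2}\bigl(\begin{smallmatrix} TT^{\top} & -T \\ -T^{\top} & T^{\top} T \end{smallmatrix}\bigr)$. Writing the SVD $T = U\Sigma V^{\top}$ and conjugating by $\operatorname{diag}(U, V)$ further simplifies $\tilde M_3$ to $\tfrac{1}{2}\bigl(\begin{smallmatrix}\Sigma\Sigma^{\top} & -\Sigma\\ -\Sigma^{\top} & \Sigma^{\top}\Sigma\end{smallmatrix}\bigr)$. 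A permutation pairing the $i$-th column of $U$ with the $i$-th column of $V$ block-diagonalizes this into $\min(d_1,d_2)$ copies of the $2\times 2$ block $\tfrac{1}{2}\bigl(\begin{smallmatrix}\sigma_i^2 & -\sigma_i\\ -\sigma_i & \sigma_i^2\end{smallmatrix}\bigr)$, whose eigenvalues are $(\sigma_i^2 \pm \sigma_i)/2$, plus $|d_1 - d_2|$ trailing zero eigenvalues coming from the zero padding of $\Sigma$. Since $\sigma\mapsto(\sigma^2+\sigma)/2$ is monotone on $[0,\infty)$, this yields $\rho(M_3) = (\sigma^2 + \sigma)/2$.

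Finally, the claim reduces to the elementary inequality $\rho(M_3) - \rho(M_1) = \sigma(1-\sigma)/2 \geq 0$, which holds provided $\sigma \leq 1$. For this I would invoke the standing assumption $H \succeq 0$ from the paper's problem formulation: combined with $H_{11} \succ 0$, it yields the Schur-complement inequality $H_{22} - H_{12}^{\top} H_{11}^{-1} H_{12} \succeq 0$, equivalent after symmetric conjugation by $H_{22}^{-1/2}$ to $T^{\top}T \preceq I$, hence $\sigma \leq 1$.

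The main bookkeeping hurdle is the SVD reduction of $\tilde M_3$ when $d_1 \neq d_2$: the rectangular shape of $\Sigma$ has to be tracked carefully so that the extra zero columns or rows contribute only trivial eigenvalues, ensuring that $\rho(M_3)$ is determined solely by $\sigma_{\max}(T)$. Beyond this, the argument is routine block-matrix algebra and requires no delicate inequality beyond the Schur complement.
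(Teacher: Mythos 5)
Your proposal is correct and follows essentially the same route as the paper: both normalize away $H_{11},H_{22}$ by the similarity $\operatorname{diag}(H_{11}^{1/2},H_{22}^{1/2})$, identify $\rho(M_1)=\rho(M_2)$ with the largest eigenvalue of $T^{\top}T$, compute the spectrum of $M_3$ explicitly as $(\sigma_i^2\pm\sigma_i)/2$ (the paper via a block-determinant expansion of $\det(\lambda I-M_3)$, you via SVD block-diagonalization into $2\times2$ blocks — a cosmetic difference), and close with $\sigma_{\max}(T)\le 1$ from $H\succeq 0$. No gaps.
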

\begin{proof} Without loss of generality, we  need only consider the situation where $H_{ii}=I_{d_i}$
    for $i=1,2$ and $d_1\geq d_2$ because the similarity transformation
    $M\mapsto  P M P\inv$
    does not
    change the spectrum of $M$, where $P=\left[\begin{array}{cc}
    H_{11}^{\frac{1}{2}} & 0\\
    0 & H_{22}^{\frac{1}{2}}\\
    \end{array}\right]$.
    In this case, a simple calculation yields
    \begin{equation}\label{eq:iterM12}
    M_1 =  \left[\begin{array}{cc}
    0 & -H_{12}\\ [0.1cm]
    0 & H_{12}\zz H_{12}\\
    \end{array}\right]\qquad {\rm and}
    \qquad
    M_2 =  \left[\begin{array}{cc}
    H_{12}H_{12}\zz & 0\\[0.1cm]
    -H_{12}\zz & 0\\
    \end{array}\right].
    \end{equation}
    Let $\sigma_1\geq \sigma_2\geq \ldots \geq\sigma_{d_2}$ be the eigenvalues of $H_{12}^\top H_{12}$.
    Recall that $H \succeq 0$ and $H_{ii} = I_{d_i}$ for $i=1,2$. Then, we have that
    $\sigma_i \in [0, 1]$, $i=1, \ldots, d_2$, and obtain from
    \eqref{eq:iterM12} that
    \begin{equation*}\label{eq:spect12}
    \rho(M_1)=\rho(M_2) = \sigma_1.
    \end{equation*}

    Clearly,
    $$M_3 =
    \frac{1}{2}\left[\begin{array}{cc}
    H_{12}H_{12}\zz & -H_{12}\\[0.1cm]
    -H_{12}\zz & H_{12}\zz H_{12}\\
    \end{array}\right].
    $$
    By direct computation, it holds that
    \begin{eqnarray*}
    {\rm det}( \lambda I -M_3 ) &= &\left(1\over 2\right)^d{\rm det}\left[
    \begin{array}{cc}
     2\lambda I- H_{12}H_{12}^\top  & H_{12}\\[0.2cm]
     H_{12}^\top & 2\lambda I - H_{12}^\top H_{12}
    \end{array}
    \right] \nonumber \\[0.3cm]
    & =&
    \left({1\over 2}\right)^d(2\lambda)^{d_1-d_2}
    {\rm det} \left[4\lambda^2 I -(4\lambda+1)H_{12}^\top H_{12}+(H_{12}^\top H_{12})^2\right]
    \nonumber\\[0.2cm]
    &=&\left({1\over 2}\right)^d(2\lambda)^{d_1-d_2}\prod_{i=1}^{d_2}(4\lambda^2-(4\lambda+1)\sigma_i +\sigma_i^2)
    \end{eqnarray*}
    and so the eigenvelues of $M_3$ are $0$ (multiplicty $=d_1-d_2$) and ${\sigma_i \pm \sqrt{\sigma_i}\over 2}$
    for $i=1,2,\ldots,d_2$. Because $\sigma_1\in [0,1]$, we  have that
    \[
    \rho(M_3) ={\sigma_1 +\sqrt{\sigma_1}\over 2}\geq \sigma_1.
    \]
   This completes the proof. \quad \end{proof}

Therefore, although random permutation does indeed make multi-block ADMM and BCD more robust, especially for  ``bad" or diverging problems, cyclic ADMM or BCD may still perform well, or even better, for solving ``nice" problems.

\section{Concluding Remarks}

In this paper, we have demonstrated the point-wise or iterate convergence of the classical
$2$-block  ADMM for solving convex optimization problems with coupled quadratic objective functions under a mild assumption. This assumption becomes necessary and sufficient for the global convergence of the ADMM when the objective is a quadratic function. This result partially answers, in the affirmative, the open question arising in  \cite{HLR2014} on the convergence of ADMM for nonseparable optimization problems. We also derived the expected convergence of RPADMM in solving linearly constrained coupled quadratic optimization problems. This is a non-trivial extension of the convergence analysis given  in
\cite{SLY2015}, which is only applicable to nonsingular linear systems. When the linear constraint is absent, the proximal ADMM and RPADMM reduce to the cyclic proximal BCD and RPBCD. Thus, this study has provided new convergence results for BCD-type methods. In particular, we have established the first iterate convergence result for 2-block cyclic \red{proximal} BCD without assuming the boundedness of the iterates and the expected iterate convergence of RPBCD for multi-block convex quadratic optimization. We also theoretically demonstrated that RPBCD does not necessarily dominate cyclic BCD. Although the results for RPADMM and RPBCD are restricted to quadratic minimization models, they provide some interesting insights on the use of these methods: 1) random permutation makes multi-block ADMM and BCD more robust for multi-block convex minimization problems; 2) cyclic BCD may outperform RPBCD for ``nice'' problems, and therefore RPBCD should be applied with caution when solving general multi-block convex optimization problems.

  Two challenging open questions concern the extension of our convergence results for RPADMM and RPBCD to more general convex optimization problems, and an exploration of the global convergence rate of RPADMM and RPBCD. In particular, it would be interesting to know which problems are better suited to RPADMM or RPBCD.


\bigskip

\noindent {\bf Acknowledgements.} Caihua Chen was supported by the
Natural Science Foundation of Jiangsu Province [Grant No.
BK20130550] and the National Natural Science Foundation of China
[Grant No. 11371192]. Min Li was supported by the National Natural
Science Foundation of China [Grant No. 11001053, 71390335], Program
for New Century Excellent Talents in University [Grant No.
NCET-12-0111], and Qing Lan Project. Xin Liu was supported by the
National Natural Science Foundation of China [Grant No. 11471325, 91530204,
11622112, 11331012, 11461161005], the National Center for Mathematics and Interdisciplinary Sciences,
CAS, and Key Research Program of Frontier Sciences, CAS.
Yinyu Ye was supported by the AFOSR Grant [Grant No. FA9550-12-1-0396].

The authors would like to thank Dr. Ji Liu from University of
Rochester and Dr. Ruoyu Sun from Stanford University
for the helpful discussions on the block coordinate descent
method.

\bigskip

\bigskip

\newpage

\noindent{\Large \bf Appendix.}

\bigskip

\noindent{\bf Appendix A.} \blue{
The proof of Lemma \ref{lemma:eigQS} is similar to, but not exactly the same as,
that of \cite[Lemma 2]{SLY2015}. Since $S$ is allowed to be singular here,
we need also show the positive definiteness of $Q$ by mathematical induction.
For completeness, we will provide a concise proof here.
Interested readers are referred to \cite{SLY2015} for the motivation and other details
of this proof.

{\bf Proof of Lemma \ref{lemma:eigQS}.}
This lemma reveals a linear algebra property, and is essentially not related with $H$, $A$ and $\beta$
if we define $L_\sigma$ directly by $S$. For brevity, we
restate the main assertion to be proved as following:
\begin{eqnarray}\label{eq:eig}
\mathrm{eig}(QS)\subset \left[0,\frac{4}{3}\right),
\end{eqnarray}
where $S\in\R^{d\times d}$ is positive semidefinite, $S_{ii}\in\R^{d_i\times d_i}$ ($i=1,...,n$)
is positive definite,
\begin{eqnarray}\label{eq:defineL}
(L_{\sigma})_{\sigma(i),\sigma(j)}:=\left\{
\begin{array}{ll}
S_{\sigma(i)\sigma(j)}, & \hbox{if} \;\; 1\leq j \leq i \leq n,\\
0, & \hbox{otherwise},
\end{array}
\right.  \qquad Q:=\frac{1}{n!}\sum\limits_{\sigma\in\Gamma} L\inv_\sigma,
\end{eqnarray}
and $\Gamma$ is a set consisting of all permutations of $(1,...,n)$.

Without loss of generality, we assume $S_{ii}=I_{d_i}$ ($i=1,...,n$). Otherwise,
we denote $$D:=\mathrm{Diag}\left(S_{11}^{-\frac{1}{2}},...,
S_{nn}^{-\frac{1}{2}}\right).$$ It is easy to verify that
$\tilde{Q} = D\inv QD\inv $, if $\tilde{S} =DSD$, and $\tilde{L}_{\sigma}$ and $\tilde{Q}$
are defined by \eqref{eq:defineL} with $\tilde{S}$. It holds that
$$\mathrm{eig}(\tilde{Q}\tilde{S})=\mathrm{eig}(D\inv QD\inv DSD)=
\mathrm{eig}(D\inv QSD)=\mathrm{eig}(QS),$$
and $\tilde{S}_{ii}=I_{d_i}$ ($i=1,...,n$). Due to the positive semi-definiteness of $S$,
and by a slight abuse of the notation $A$,
there exists $A\in\R^{d\times d}$ satisfying $S=A\zz A$. Let $A_i\in\R^{d\times d_i}$ ($i=1,...,n$)
be the column blocks of $A$, and it is clear that $S_{ij} = A_i\zz A_j$ for all $1\leq i,j\leq n$.
In addition, it also holds that $\mathrm{eig}(QS)=\mathrm{eig}(AQA\zz)$.

For the brevity of notation, we define the block permutation matrix $P_k$
as following:
\begin{eqnarray}\label{eq:defP}
(P_k)_{ij}:=\left\{
\begin{array}{ll}
I_{d_i}, & \mbox{if\,} 1\leq i=j\leq k-1;\\
I_{d_i}, & \mbox{if\,} k+1\leq i=j+1\leq n ;\\
I_{d_i}, & \mbox{if\,} i=k,\,j=n;\\
0_{d_i\times d_j}, & \mbox{if\,} 1\leq j\leq k-1,\, i\neq j;\\
0_{d_i\times d_{j+1}}, &\mbox{if\,} k\leq j \leq n-1,\, i\neq j+1;\\
0_{d_i\times d_k}, &\mbox{otherwise.}
\end{array}
\right.
\end{eqnarray}
It can be easily verified that $P_k\zz = P_k\inv$, and $P_n=I_d$.
For $k\in(1,...,n)$, we define
$\Gamma_k:=\{\sigma'\mid \sigma' \mbox{\,is\,a\,permutation\,of\,} (1,...,k-1,k+1,...,n)\}$.
For any $\sigma'\in \Gamma_k$, we define $L_{\sigma'}\in\R^{(d-d_k)\times (d-d_k)}$
as the following
\begin{eqnarray}\label{eq:defL'}
(L_{\sigma'})_{\sigma'(i),\sigma'(j)}:=\left\{
\begin{array}{ll}
S_{\sigma'(i)\sigma'(j)}, & \hbox{if} \;\; 1\leq j \leq i \leq n-1,\\
0, & \hbox{otherwise}.
\end{array}
\right.
\end{eqnarray}
We define $\hat{Q}_k\in\R^{(n-d_k)\times (n-d_k)}$ by
\begin{eqnarray}\label{eq:Qk}
\hat{Q}_k := \frac{1}{|\Gamma_k|}\sum\limits_{\sigma'\in\Gamma_k}L_{\sigma'}\inv,\qquad
k=1,...,n,
\end{eqnarray}
and $W_k$ as the $k$-th block-column of $S$ excluding the block $S_{kk}$,
i.e. $W_k =[S_{k1},...,S_{kn}]\zz$. Moreover, let
$\hat{A}_k:=[A_1,...,A_{k-1},A_{k+1},...,A_n]$, we have
$AP_k = [\hat{A}_k,A_k]$.

Now we use mathematical induction to prove this lemma.
Firstly, the assertion \eqref{eq:eig} and $Q\succ 0$ hold when $n=1$, as $QS=I$ in this case.
Next, we will prove the lemma for any $n\geq 2$ given that the assertion
\eqref{eq:eig} and $Q\succ 0$ hold for $n-1$.

A key step of the proof is to reveal the following relationship.
\begin{eqnarray}\label{eq:key1}
Q=\frac{1}{n}\sum\limits_{k=1}^n P_k Q_k P_k\zz,
\end{eqnarray}
where
\begin{eqnarray}\label{eq:defQk}
Q_k:=\left[
\begin{array}{cc}
\hat{Q}_k & -\frac{1}{2}\hat{Q}_k W_k\\
-\frac{1}{2}W_k\zz \hat{Q}_k & I_{d_k}
\end{array}
\right],
\end{eqnarray}
in which $\hat{Q}^k$ is defined by \eqref{eq:Qk}.
The proof of \eqref{eq:key1} will be provided later.

It directly follows from \eqref{eq:key1} that
$AQA\zz = \frac{1}{n}\sum\limits_{k=1}^n AP_k Q_kP_k\zz A\zz$.
Consequently,
\begin{eqnarray}\label{eq:subineq}
	\frac{1}{n}\sum\limits_{k=1}^n \lambda_{\min}(AP_kQ_kP_k\zz A\zz)
	\leq \lambda_{\min}(AQA\zz)\leq \lambda_{\max}(AQA\zz)
	\leq \frac{1}{n}\sum\limits_{k=1}^n \lambda_{\max}(AP_kQ_kP_k\zz A\zz).
\end{eqnarray}

We will show, in the end of this proof, the fact that
\begin{eqnarray}\label{eq:key2}
\mathrm{eig}(AQ_nA\zz)\subset \left[0,\frac{4}{3}\right)
\end{eqnarray}
if it holds that
\begin{eqnarray}\label{eq:asmpn-1}
	\mathrm{eig}(\hat{Q}_n \hat{A}_n\zz \hat{A}_n)\subset \left[0,\frac{4}{3}\right).
\end{eqnarray}
In fact, \eqref{eq:asmpn-1} holds directly by the induction assumption.
Together with the similarity among the blocks, the relationship \eqref{eq:key2}
implies
\begin{eqnarray}\label{eq:asmpn-2}
\mathrm{eig}(AP_kQ_kP_k\zz A\zz)\subset \left[0,\frac{4}{3}\right),\qquad \mbox{for\,all\,}
k=1,...,n.
\end{eqnarray}
Substitute \eqref{eq:asmpn-2} into \eqref{eq:subineq},
we prove the assertion \eqref{eq:eig} for $n$, and hence complete the proof of
Lemma \ref{lemma:eigQS}.

Our remaining task is to prove the relationships \eqref{eq:key1} and \eqref{eq:key2}.
We will achieve this goal by the following two steps.

\noindent {\bf Step 1.} Let $\sigma'\in\Gamma_k$, we can partition $L_{\sigma'}$
as following
\begin{eqnarray}\label{eq:63}
L_{\sigma'} =\left[
\begin{array}{cc}
Z_{11} & Z_{12}\\
Z_{21} & Z_{22}
\end{array}
\right].
\end{eqnarray}
Here the sizes of $Z_{11}$ and $Z_{22}$
are $(d_1+\cdots + d_{k-1})\times (d_1+\cdots + d_{k-1})$
and $(d_{k+1}+\cdots + d_{n})\times (d_{k+1}+\cdots + d_{n})$,
respectively. The sizes of $Z_{12}$ and $Z_{21}$ can be determined accordingly.
We denote
\begin{eqnarray*}
	U_k=(A_1,...,A_{k-1}),\qquad V_k = (A_{k+1},...,A_n),
\end{eqnarray*}
which implies
\begin{eqnarray}\label{eq:64}
W_k = [U_k,V_k]\zz A_k = \mat{c}{U_k\zz A_k\\ V_k\zz A_k}.
\end{eqnarray}
It is then easy to verify that
\eqnonum{
L_{(\sigma',k)}
=\mat{ccc}{
	Z_{11} & U_k\zz A_k & Z_{12}\\
	0 & I_{d_k} & 0\\
	Z_{21} & V_k\zz A_k & Z_{22}
}.}
Left and right multiplying both sides of the above relationship
by  $P_k\zz$ and $P_k$, respectively, we obtain
\eqnum{eq:65}{
P_k\zz L_{(\sigma',k)} P_k
= P_k\zz \mat{ccc}{
Z_{11} & Z_{12} & U_k\zz A_k \\
0  & 0 & I_{d_k}\\
Z_{21}  & Z_{22} & V_k\zz A_k
}
=\mat{ccc}{
Z_{11} & Z_{12} & U_k\zz A_k \\
Z_{21}  & Z_{22} & V_k\zz A_k\\
0  & 0 & I_{d_k}\\
}
=\mat{cc}{
L_{\sigma'} & W_k\\
0 & I_{d_k}
}.
}
Taking the inverse of both sides of \eqref{eq:65}, we obtain
\eqnum{eq:62}{
P_k\zz L_{(\sigma',k)}\inv P_k
=\mat{cc}{
	L_{\sigma'}\inv & -L_{\sigma'}\inv W_k\\
	0 & I_{d_k}
}.
}

Summing up \eqref{eq:62} for all $\sigma'\in\Gamma_k$
and dividing by $|\Gamma_k|$, we get
\begin{eqnarray}\label{eq:66}
\frac{1}{|\Gamma_k|}
\sum\limits_{\sigma'\in \Gamma_k}
P_k\zz L_{(\sigma',k)}\inv P_k
=\mat{cc}{
\frac{1}{|\Gamma_k|}
\sum\limits_{\sigma'\in \Gamma_k}
\zz L_{(\sigma',k)}\inv &
-\frac{1}{|\Gamma_k|}
\sum\limits_{\sigma'\in \Gamma_k}
 L_{(\sigma',k)}\inv W_k\\
 0 & I_{d_k}
} = \mat{cc}{
\hat{Q}_k & -\hat{Q}_k W_k\\
0 & I_{d_k}
},
\end{eqnarray}
Here, the last equality follows from \eqref{eq:Qk}.
By the definition of $L_\sigma$, it is easy to verify that
$L_{\sigma}\zz = L_{\bar{\sigma}}$,
where $\bar{\sigma}$ is a ``reverse permutation" of $\sigma$
that satisfies $\bar{\sigma}(i)=\sigma(n+1-i)$ ($i=1,...,n$).
Thus we have $L_{(\sigma',k)}=L_{(k,\bar{\sigma}')}\zz$,
where $\bar{\sigma}'$ is a reverse permutation of $\sigma'$.
Summing over all $\sigma'$, we get
$$\sum\limits_{\sigma'\in \Gamma_k}L_{(\sigma',k)}\inv
= \sum\limits_{\sigma'\in \Gamma_k} L_{(k,\bar{\sigma}')}\nzz
= \sum\limits_{\sigma'\in \Gamma_k} L_{(k,\sigma')}\nzz,
$$
where the last equality follows from the fact that the summing over $\bar{\sigma}'$
is the same as summing over $\sigma'$. Thus, we have
\begin{eqnarray*}
	\frac{1}{|\Gamma_k|}
	\sum\limits_{\sigma'\in \Gamma_k}
	P_k\zz L_{(k,\sigma')}\inv P_k
	=\dkh{\frac{1}{|\Gamma_k|}
	\sum\limits_{\sigma'\in \Gamma_k}
	P_k\zz L_{(\sigma',k)}\inv P_k}\zz
	= \mat{cc}{
		\hat{Q}_k & 0\\
		-W_k\zz \hat{Q}_k  & I_{d_k}
	}.
\end{eqnarray*}
Here, the last equality uses the symmetry of $\hat{Q}_k$.
Combining the above relation, \eqref{eq:66} and the definition of
$Q_k$, we have
\begin{eqnarray}\label{eq:67}
\frac{1}{2|\Gamma_k|}P_k\zz
\sum\limits_{\sigma'\in \Gamma_k}
\dkh{
	L_{(k,\sigma')}\inv + L_{(\sigma',k)}\inv
}P_k =
\mat{cc}{
	\hat{Q}_k & -\frac{1}{2}\hat{Q}_k W_k\\
	-\frac{1}{2}W_k\zz \hat{Q}_k  & I_{d_k}
}=Q_k.
\end{eqnarray}

Using the definition of $P_k$ and the fact that $|\Gamma_k|=(n-1)!$,
we can rewrite \eqref{eq:67} as
\begin{eqnarray*}
	S_kQ_kS_k\zz = \frac{1}{2(n-1)!}\sum\limits_{\sigma'\in \Gamma_k}
	\dkh{
		L_{(k,\sigma')}\inv + L_{(\sigma',k)}\inv
	}.
\end{eqnarray*}
Summing up the above relation for $k=1,...,n$ and then dividing by $n$,
we immediately arrive at \eqref{eq:key1}.

\noindent {\bf Step 2.}  For simplicity, we use
$W$, $\hQ$ and $\hA$ to take the place $W_n$, $\hQ_n$
and $\hA_n$, respectively.

By the induction assumption, we have $\hQ\succ 0$, which implies
$\Theta:=W\zz \hQ W \succeq 0$. Recall that $S_{nn}=A_n\zz A_n = I_{d_n}$,
we have
\begin{eqnarray}\label{eq:69}
\rho(\Theta) =\max\limits_{v\in\R^{d_n},\,||v||=1}\,v\zz A_n\zz \hA\zz\hQ\hA A_nv
\leq \rho(\hA\hQ\hA) \max\limits_{v\in\R^{d_n},\,||v||=1}\,||A_nv||_2^2
<\frac{4}{3}||A_n||\fs =\frac{4}{3}.
\end{eqnarray}
Hence, we obtain
\begin{eqnarray}\label{eq:68}
0\preceq \Theta\prec \frac{4}{3}I_{d_n}.
\end{eqnarray}
Recall the definition \eqref{eq:defQk}, we have
\begin{eqnarray}\label{eq:70}
Q_n = \mat{cc}{
I_{d-d_n} & 0\\
-\frac{1}{2}W\zz & I_{d_n}
} \,\mat{cc}{
\hQ & 0\\
0 & I_{d_n} - \frac{1}{4}W\zz \hQ W
}\, \mat{cc}{
I & -\frac{1}{2}W\\
0 & I_{d_n}
}
=J\mat{cc}{
	\hQ & 0\\
	0 & C
}J\zz,
\end{eqnarray}
where $J:=\mat{cc}{
	I_{d-d_n} & 0\\
	-\frac{1}{2}W\zz & I_{d_n}
}$ and $C:=I_{d_n} - \frac{1}{4}W\zz \hQ W$.
Apparently, we have $C\succ 0$. Together with $\hQ\succ 0$,
it implies $Q_n\succ 0$. Thus, we directly obtain
$
\eig{AQ_nA\zz} \subset \left[0,\infty
\right)$.
It remains to show
\begin{eqnarray}\label{eq:73}
\rho(AQ_nA\zz) <\frac{4}{3}.
\end{eqnarray}
Denote $\hB:=\hA\zz \hA$, then we can write $S$ as
\begin{eqnarray*}
	S=A\zz A =\mat{cc}{
\hB & W\\
W\zz & I_{d_n}	
}.
\end{eqnarray*}
We can reformulate $\rho(AQ_nA\zz)$ as follows:
\begin{eqnarray}\label{eq:75}
\rho(AQ_nA\zz) = \rho\dkh{
AJ\mat{cc}{
	\hQ & 0\\
0 & C
}J\zz A\zz
}=\rho\dkh{
\mat{cc}{
	\hQ & 0\\
	0 & C
}J\zz A\zz A J
}.
\end{eqnarray}
It is easy to verify that
\begin{eqnarray*}
	J\zz A\zz A J
	= \mat{cc}{
		I_{d-d_n} & -\frac{1}{2}W\\
		0 & I_{d_n}
}\, \mat{cc}{
\hB & W\\
W\zz & I	
}\, \mat{cc}{
I_{d-d_n} & 0\\
-\frac{1}{2}W\zz & I_{d_n}
} = \mat{cc}{
\hB -\frac{3}{4}WW\zz & \frac{1}{2}W\\
\frac{1}{2}W\zz & I_{d_n}
}.
\end{eqnarray*}
Thus,
\begin{eqnarray}\label{eq:77}
Z:=\mat{cc}{
	\hQ & 0\\
	0 & C
}J\zz A\zz A J
=\mat{cc}{
\hQ\hB -\frac{3}{4}\hQ WW\zz & \frac{1}{2}\hQ W\\
\frac{1}{2}CW\zz & C
}.
\end{eqnarray}
According to \eqref{eq:75}, it suffices to prove $\rho(Z)<\frac{4}{3}$.
Suppose $\lambda$ is an arbitrary eigenvalue of $Z$, and $v\in \R^d$
is one of its associate eigenvector.
In the rest, we only need to show
\begin{eqnarray}\label{eq:78}
\lambda <\frac{4}{3}
\end{eqnarray}
holds. Then, using its arbitrariness,
we have $\rho(Z)<\frac{4}{3}$ which implies \eqref{eq:73}, and then \eqref{eq:key2}
holds.

Partition $v$ into $v=\mat{c}{v_1\\ v_0}$,
where $v_1\in \R^{d-d_n}$, $v_0\in \R^{d_n}$.
Then, $Zv=\lambda v$ implies that
\begin{eqnarray}\label{eq:79a}
\dkh{
\hQ\hB -\frac{3}{4}\hQ WW\zz
}v_1 + \frac{1}{2} \hQ W v_0 &=& \lambda v_1,\\
\label{eq:79b}
\frac{1}{2}CW\zz v_1 + Cv_0 &=& \lambda v_0.
\end{eqnarray}

If $\lambda I_{d_n}-C$ is singular, i.e. $\lambda$ is an eigenvalue of $C$.
By the definition of $C$ and \eqref{eq:68}, we have
$\frac{2}{3}I_{d_n}\prec C=I_{d_n}-\frac{1}{4}\Theta \preceq I_{d_n}$,
which implies that $\lambda \leq 1$, thus inequality \eqref{eq:78} holds.
In the following, we assume $\lambda I_{d_n}-C$ is nonsingular.
An immediate consequence is $v_1\neq 0$.

By \eqref{eq:79b}, we obtain $v_0=\frac{1}{2}(\lambda I_{d_n}-C)\inv CW\zz v_1$.
Substituting this explicit formula into \eqref{eq:79a}, we obtain
\begin{eqnarray}\label{eq:81}
\lambda v_1 = \dkh{
	\hQ\hB -\frac{3}{4}\hQ WW\zz
}v_1 + \frac{1}{4} \hQ W (\lambda I_{d_n}-C)\inv CW\zz v_1
= (\hQ\hB +\hQ W\Phi W\zz)v_1,
\end{eqnarray}
where
$
\Phi := -I_{d_n} +\lambda [(4\lambda -4) I_{d_n} +\Theta ]\inv$.
Since $\Theta$ is a symmetric matrix, $\Theta$ is also symmetric.

Suppose $\lambda_{\max}(\Phi)>0$,
the definition of $\Phi$ gives us
$$
\theta\in\eig{\Theta} \Leftrightarrow -1+\frac{\lambda}{(4\lambda -4)+\theta}
\in \eig{\Phi}.
$$
Together with $\lambda_{\max}(\Phi)>0$, there exists $\theta\in \eig{\Theta}$
such that $-1+\frac{\lambda}{(4\lambda -4)+\theta}$.
If $\lambda \leq 1$, \eqref{eq:78} already holds. Otherwise, $\lambda >1$,
which implies $1<\frac{\lambda }{(4\lambda-4)+\theta}\leq \frac{\lambda}{4\lambda -4}$,
and then \eqref{eq:78} holds.

Now we assume $\lambda_{\max}(\Phi)\leq 0$, i.e. $\Phi\preceq 0$.
By the induction, we have $\hl:=\rho(\hQ\hB)=
\rho(\hQ \hA\zz \hA) \subset\left[0,\frac{4}{3}\right)$.
Due to the positive definiteness of $\hQ$,
there exists nonsingular $U\in\R^{(d-d_n)\times (d-d_n)}$ such that
$\hQ =U\zz U$. Let $Y:=UW\Phi W\zz U\zz\in\R^{(d-d_n)\times (d-d_n)}$.

	We have
	$v\zz Yv
		=v\zz UW\Phi W\zz U\zz v
		=(W\zz U\zz v)\zz \Phi (W\zz U\zz v) \leq 0$
		holds for all $v\in\R^{d-d_n}$,
	where the last inequality follows from $\Phi\preceq 0$.
	Thus, $Y\preceq 0$. Pick up arbitrary $g$ satisfying
	$g>\rho(Y)$. Then, it holds that
	\begin{eqnarray}\label{eq:newadd1}
	\rho(g I_{d-d_n} +Y)\leq g.
	\end{eqnarray}
	
	From \eqref{eq:81}, we can conclude that
	$(g+\lambda) v_1 = (\hQ\hB +\hQ W\Phi W\zz + g I_{d-d_n})v_1$.
	Consequently,
	\begin{eqnarray*}
		g+\lambda\in \eig{
			\hQ\hB +\hQ W\Phi W\zz + g I_{d-d_n}
		} = \eig{U\hB U\zz +UW\Phi W\zz U\zz +g I_{d-d_n}},
	\end{eqnarray*}
	which implies
	\begin{eqnarray}\label{eq:87}
	g+\lambda \leq \rho(U\hB U\zz +Y +gI)
	\leq \rho(U\hB U\zz) +\rho(Y+gI)
	=\hl +\rho(Y+gI) \leq \hl + g,
	\end{eqnarray}
	where the last inequality follows from \eqref{eq:newadd1}.
	The relation \eqref{eq:87} directly gives us that
	$\lambda\leq \hl<\frac{4}{3}$. Namely, \eqref{eq:78} also holds in this case.

We have completed the proof.
$\square$
}
\bigskip

\noindent {\bf Appendix B.} {\bf Proof of Lemma \ref{lemma:alg}.}
For convenience, we use the notation
\[
g(\lambda;S,T):= {\rm det}\big[(\lambda-1)^2 I + (2\lambda-1) S + (\lambda-1)
T \big].
\]
We prove this lemma by mathematical induction on the dimension $d$.
When $d=1$, it is easily seen that
\[
g(\lambda;S,T) =  \left\{
\begin{array}{ll}
(\lambda-1)^0 [(\lambda-1)^2  + (2\lambda-1) S + (\lambda-1)T]  & \hbox{if}\;S\neq 0,\\
(\lambda-1)^1 (\lambda-1 +T)  & \hbox{if}\; S=0,\, T\neq 0,\\
(\lambda-1)^2 \cdot1  &\hbox{if}\; S =0,\, T=0,
\end{array}
\right.
\]
which means that Lemma \ref{lemma:alg} holds in this case. Suppose
this lemma is valid for $d\leq k-1$. Consider the case where  $d =k$.

 \begin{itemize}
        \item[Case 1:] $S\succ 0$. In this case, ${\rm Rank}(S) = {\rm
Rank}(S+T)=k$ and then $l = 0$. Because
\[
g(\lambda;S,T)=(\lambda-1)^{l} g(\lambda;S,T) \qquad{\rm and}\qquad
g(1;S,T) = {\rm det}(S) >0,
\]
 Lemma \ref{lemma:alg} holds in this case.

        \item[Case 2:]
         $S\succeq 0$ but not positive definite. Let $S$ admit the
following eigenvalue decomposition
\[
P^\top SP = \left[
\begin{array}{cccccc}
0 &  & & & &\\
&\ddots & &&&\\
& & 0 & &&\\
& & & s_1 & & \\
&&&&\ddots&\\
&&&&& s_t
\end{array}
\right]:=D,
\]
where $P$ is a orthogonal matrix and $s_i>0$. If we let $W = P^\top TP\succeq 0$,
then
\[
g(\lambda;S,T) = g(\lambda;D,W).
\]
The proof proceeds by considering the following two subcases.
\begin{itemize}
\item[Case 2.1:]  $W_{11}=0$. Since $W$ is positive semidefinite, then
$W_{1i}= W_{i1}=0$ for $i=1,2,\ldots,k$. Note that
\[
g(\lambda;D,W) = (\lambda-1)^2 g(\lambda;D',W')
\]
where $D'$ and $W'$ are the submatrices of $D$ and $W$ obtained by deleting the
first row and column. As we have assumed that Lemma \ref{lemma:alg} holds for $d =k-1$
, there exists a polynomial $p(x)$ such that
\[
g(\lambda;D,W) = (\lambda-1)^2(\lambda-1)^{2k-2-{\rm Rank}{D'} -{\rm
Rank}{(D'+W')}}p(\lambda).
\]
Note that ${\rm Rank}(D')={\rm Rank}(D)= {\rm Rank}(S)$ and ${\rm
Rank}(D'+W')={\rm Rank}(D+W)= {\rm Rank}(S+T)$. Thus, we have
\[
g(\lambda;S,T) = (\lambda-1)^{2k- {\rm Rank}(S)-{\rm Rank}(S+T)},
\]
which implies that Lemma \ref{lemma:alg} is true for $d=k$ in this
subcase.
\item[Case 2.2:]  $W_{11}\neq 0$. Without loss of generality, assume
$W_{11}=1$. Let $w^\top =[W_{12},\ldots,W_{1k}]$. By direct
calculation, we obtain
\[
g(\lambda;D,W) = (\lambda-1)^2 g(\lambda;D',W') +
(\lambda-1)g(\lambda;D',W'- ww^\top).
\]
Since ${\rm Rank}(D'+W')\leq {\rm Rank}(D+W) ={\rm Rank}(S+T)$,
there exists a polynomial $p_1(x)$ such that
\[
g(\lambda;D',W')  = (\lambda-1)^{2k-2-{\rm Rank}(S) - {\rm
Rank}(S+T)}p_1(\lambda),
\]
where $p_1(1)\geq 0$. On the other hand, since ${\rm
Rank}(D'+W'-ww^\top) = {\rm Rank}(D+W)-1 ={\rm Rank}(S+T)-1$, there
exists a polynomial $p_2(x)$ such that
\[
g(\lambda;D', W'- ww^\top)= (\lambda-1)^{2k-1-{\rm Rank}(S)-{\rm
Rank}(S+T)}p_2(\lambda),
\]
where $p_2(1)>0$. Therefore,
\[
g(\lambda;S,T) =(\lambda-1)^{2k-{\rm Rank}(S)-{\rm
Rank}(S+T)}(p_1(\lambda)+p_2(\lambda))
\]
and then Lemma \ref{lemma:alg} holds for this subcase.
\end{itemize}
\end{itemize}

This completes the proof.
 \quad $\square$

\bigskip

\noindent {\bf Appendix C.} {\bf Proof of Lemma \ref{lemma:geo}.} It
is easily seen that
\[
{\rm Rank}(S) +{\rm Rank}(\beta A^\top A)= {\rm Rank}
\left[
\begin{array}{cc}
S & 0 \\
0 & \beta AA^\top
\end{array}
\right],
\]
and therefore we need only prove that
\begin{equation}\label{rankeq}
{\rm Rank} \left[
\begin{array}{cc}
S & -A\zz \\
\beta A & 0
\end{array}
\right] = {\rm Rank} \left[
\begin{array}{cc}
S & 0 \\
0 & \beta AA^\top
\end{array}
\right].
\end{equation}
Indeed, consider the following linear system
\begin{equation}\label{system1}
\left[
\begin{array}{cc}
S & -A\zz \\
\beta A & 0
\end{array}
\right] \left[
\begin{array}{c}
x \\
\mu
\end{array}
\right] =0,
\end{equation}
which is equivalent to
\[
\left\{
\begin{array}{l}
Sx - A^\top \mu =0,\\
Ax =0.
\end{array}
\right.
\]
It then holds that
\[
x^\top Sx = x^\top A^\top \mu = (Ax)^\top \mu =0,
\]
and therefore $Sx =0$ and $A^\top \mu=0$, because $S=H+\beta A^\top A$
is positive semidefinite. This means that
\begin{equation}\label{system2}
\left[
\begin{array}{cc}
S & 0 \\
0 & \beta AA^\top \\
\end{array}
\right] \left[
\begin{array}{c}
x \\
\mu
\end{array}
\right] =0.
\end{equation}
On the other hand,  it is not difficult to verify that any solution
of  \eqref{system2} is the solution of \eqref{system1}, in other words, linear systems  \eqref{system1} and
\eqref{system2} are equivalent. As a result, the rank equality
\eqref{rankeq} holds, which completes the proof. \quad $\square$


\end{document}